\theoremstyle{plain}
\newtheorem*{def-theo}{Definition-Theorem}
\newtheorem{thm}{Theorem}[section]
\newtheorem{cor}[thm]{Corollary}
\newtheorem{prop}[thm]{Proposition}
 \newtheorem{rem}[thm]{Remark}
\theoremstyle{definition}
\newtheorem{defn}[thm]{Definition}
\numberwithin{equation}{section}
\def\al{{\alpha}}
\def \0{{\bf 0}}
\def \1{{\bf 1}}
\begin{document}

\title{ 
Second homotopy classes\\
associated with non-cancellative monoids \\
\vspace{-0.2cm}
}

\author{
Kyoji Saito\\
{\small To my family:  Reiko, Yuko, Ayako and Shingo.}
}
\date{}

\maketitle
\vspace{-0.3cm}
 \abstract{
 For a CW-complex $W$ whose fundamental group is presented by {\it semi-positive relations},\footnote
{We call a defining relation of a discrete group {\it semi-positive} if it is given by an equality $p\!=\!q$ between two semi-positive words $p$ and $q$ in the letters of a generating set $L$. Further, it is called homogeneous if the words $p$ and $q$ have equal-length.  See \S2.1 for a precise definition.
}
 we construct some of its second homotopy classes.


\noindent
\ \ \ Precisely, the monoid  introduced algebraically by the semi-positive relations is  re-introduced geometrically by a use of {\it geometric fundamental relations} in $\pi_2(W,W^{(1)},*)$.
 If the monoid has a non-cancellative tuple\footnote{ 
To capture quantitatively non-cancellativity of monoids, we introduce a concept of {\it a non-cancellative tuples over a kernel} in \S4.1.1.  The term  ``{\it twin}" in the present paper  means merely a pair of (reduced) non-cancellative tuples over a common kernel. See  \S4.2.3 and Footnote 18.\!
},  
we associate to the tuple  some  relative 2-homotopy classes by a use of the geometric description of the monoid.
%
If, furhter, there is a {\it twin of non-cancellative tuples},  the differences of the two classes  give rise to  $\pi_2$-classes of our interest. 
Hattori's  second homotopy class for some line arrangement  
is obtained in this way. 
For a comparison, we construct $\pi_2$-classes also in the {\it  classifying space of a monoid having twins of non-cancellative tuples}.\!\! 

  \vspace{-0.3cm}  

\tableofcontents

\vspace{0.15cm}
 \noindent
 {\bf  References   
    \qquad\qquad\qquad\qquad\qquad\qquad\qquad\qquad\qquad\qquad\quad\ \  {41}}

 \newpage
\section{Introduction}

\vspace{-0.1cm}
In the celebrated work by Deligne \cite{D} to show the $K(\pi,1)$-ness of the complement of discriminant loci for a finite reflection group, the lattice property of the corresponding Artin monoid was crucial (c.f.\ also \cite{B-S}). On the contrary, the elliptic Artin monoid associated with the elliptic discriminant complement is non-cancellative so that it can not be embeddable into the elliptic Artin group as a lattice (\cite{S3,S-S,S4}, see \S2.2 Example 2, Footnote 18 and \S4.2   Conjecture). This fact causes a puzzle ``whether the elliptic discriminant complement may not be a $K(\pi,1)$-space?".

 In order to answer partly to the puzzle, we develop  a method to construct some second homotopy classes, called $\Pi$-classes \eqref{eq:FinalPi2},  
 for a CW-complex $W$ whose fundamental group has semi-positive presentation and the associated monoid carries a twin of non-cancellative tuples (recall Footnote 1 and 2).  As an application,  we show that Hattori's $\pi_2$-classes associated with 3 lines arrangement is the $\Pi$-class (\S4.3 Example). \!\!\!
 
\smallskip
Let us give a brief overview (1)$\sim$(6) of the contents of the paper.

\smallskip
\noindent
(1) {\bf Semi-positively presented spaces and associated monoids.}

 As for the setting for the study in the present paper, we introduce in \S2.1 a notion of a {\it semi-positively (and homogeneously) presented space}, 
 which is a tuple $(W,L,\mathcal{R})$ where $W$ is  a CW-complex whose fundamental group is presented by semi-positive (and homogeneous) relations $\mathcal{R}$ on a generator system $L$ of $\mathcal{F}:=\pi_1(W^{(1)},*)$ (here, $W^{(1)}$ is the 1-skeleton of $W$).\footnote
{For the study in the present paper, the notion  ``semi-positively presented space" is sufficient. On the other hand, important examples, including discriminant complements and line arrangement complements (see \S2.2),  satisfy a stronger condition: ``positive homogeneously presented". Hence, we introduce both notions. The role of homogeneity is  unclear.\!\!
}
Several interesting examples of such spaces are given in \S2.2. 
To the semi-positively presented space, we associate a monoid $A^+(W,L,\mathcal{R})\!:=\! \langle L \! \mid \! \mathcal{R}_m\rangle^+$ defined as the algebraic quotient of the free monoid 
$L^*\!\!:=\!\!{\sqcup}_{l=0}^{\infty} L^l$ divided by the algebraic monoid relation generated by a copy $\mathcal{R}_m$ of $\mathcal{R}$.\!\! 

\smallskip
\noindent
(2)  {\bf A geometric realization of the monoid  $A^+(W,L,\mathcal{R})$.}  

In \S3, we give a ``geometric realization" of the monoid  $A^+(W,L,\mathcal{R})$ in the following way, where the idea is to use the relative 2-homotopy group $\pi_2(W,W^{(1)},*)$, which is close to the idea of crossed modules by Whitehead \cite{W}.  
Since our goal is to construct $\pi_2$-classes explicitly, we introduce a down to earth  notion \eqref{eq:Equivalence}: for given two loops $x,y\in \mathcal{F}$ in $W^{(1)}$, we set $[x,y]=$the set (up to 2-homotopy equivalences) of 1-homotopy equivalence maps in $W$ from $x$ to $y$.\footnote
{We may immerse the set $[x,y]$ into the  relative 2-homotopy group $\pi_2(W,W^{(1)},*)$ \eqref{eq:immersion} after choices of the orientation and  the base point (see Footnote 13). However, for some logical naturalness, we treat the system $\{[x,y]\}_{x,y\in \mathcal{F}}$ independently  outside of $\pi_2(W,W^{(1)},*)$.
}
That is,  two loops $x,y\in \mathcal{F}$ define the same element in the fundamental group $A(W, L,\mathcal{R}):=\langle L\mid \mathcal{R}\rangle$ of $W$  iff $[x,y]\not=\emptyset$.  Next in \S3.2, we consider some ``{\it geometric lifts}" $\mathcal{R}_g$ of the fundamental relations $\mathcal{R}$, called {\it geometric fundamental relations},  by associating  to each relation $\{p\!=\!q\} \! \in \! \mathcal{R}$ a subset $\mathcal{R}_g(p,q)$ of $[p,q]$.
By concatenating these geometric fundamental relations, we obtain a subsystem $\{[x,y]_{\mathcal{R}_g}\} _{x,y\in \mathcal{F}^+}$ of  $\{[x,y]\} _{x,y\in \mathcal{F}}$. Then, the  monoid $A^+(W,L,\mathcal{R})$ is re-introduced 
 as the set of equivalence classes of free positive loops $\mathcal{F}^+:= L^*$  in $W^{(1)}$ divided by the equivalence relation:  $x\sim_{\mathcal{R}_g}y\ \Leftrightarrow_{def}  [x,y]_{\mathcal{R}_g}\not=\emptyset$ for $x,y\in \mathcal{F}$.
 
 \smallskip
\noindent
(3) {\bf The\! $\pi_2$-class\! $\Pi$\! associated\! with\! a\!  twin\! of\! non-cancellative\! tuples.\!\!}\!

The \S4 is the goal of the present paper: starting from an algebraic datum,  called a {\it twin of non-cancellative tuples}, of the monoid $A^+(W,L,\mathcal{R})$, we construct some second homotopy classes, called $\Pi$-class, of  $W$.  Here, we call a tuple $(a,b,c,d)\! \in \! (A^+)^4$
for a  monoid $A^+$ a {\it non-cancellative tuple} if they satisfy the relation $ a b d\! \sim\! a  c  d$ in $A$, called the {\it non-cancellative relation},  but not the relation $b\!\sim\! c$ in $A$, where we call the pair $(b,c)$ the {\it kernel} of the non-cancellative tuple (see \S4.1.1 Definition and \S4.1.2 Examples).

Consider a non-cancellative tuple $(a,b,c,d)$ in $A^+(W,L,\mathcal{R})$, and let $(\tilde a,\tilde b, \tilde c,\tilde d)\in (\mathcal{F}^+)^4$ be their presentatives so that $\mathcal{R}_g$-set  $[\tilde a \tilde b \tilde d, \tilde a \tilde c \tilde d]_{\mathcal{R}_g}$ is not empty. Note that the set  is not left- and right-divisible by $\tilde a$ and $\tilde d$, respectively,  inside the system $[\cdot,\cdot]_{\mathcal{R}_g}$.  However, it is ``divisible" inside the larger system $[\cdot,\cdot]$ (see Proposition \ref{QuotientComposition}), and we obtain the {\it division $\mathcal{R}_g$-set}, denoted  $\tilde a \backslash [\tilde a \tilde b \tilde d, \tilde a \tilde c \tilde d]_{\mathcal{R}_g}/\tilde d$, as a subset of $[\tilde b,\tilde c]$  \eqref{eq:DivisionRgset}
 in \S4.2.2. It gives a set of  {\it particular homotopy equivalences from $\tilde b$ to $\tilde c$}. 
 Thus, if there is another non-cancellative tuples over the same kernel $(b,c)$ (called a twin-partner), the difference of the two division $\mathcal{R}_g$-sets  form a  set of homotopy equivalence classes from $\tilde b$ to itself.  Then, their immersion image \eqref{eq:immersion} (see Footnote 4) in the second homotopy group of $W$ is  the 
 {\it $\Pi$-class $\Pi((a,b,c,d),(a',b,c,d'))$ \eqref{eq:FinalPi2}  for the twin}.  We see in Proposition \ref{FinalProp}  
 that (1) the $\Pi$-class does not depend on the choices of the representatives $(\tilde a,\tilde b, \tilde c,\tilde d), (\tilde a',\tilde b, \tilde c,\tilde d')$, and (2) the $\Pi$-class form a single residue class of the global inertia group $G((a,b,c,d),(a',b,c,d')) \subset \pi_2(W,*)$ \eqref{eq:FinalInertia}.\!\!
 
%
\smallskip
\noindent
(4) {\bf Hattori's $\pi_2$-classes of line arrangements and the $\Pi$-class.\!\!}

We apply above construction of $\Pi$-classes to Yoshinaga's positive homogeneous presentation of fundamental groups of the complements $M(\mathcal{A})$ of line arrangements $\mathcal{A}/\mathbb{R}$ in $\mathbb{C}^2$ (\S2.2 Example 4) for  two typical cases:
 
\noindent
 (a) Generic 3-lines arrangement. This is the case, where there exists a non-trivial $\pi_2$-class of $\pi_2(M(\mathcal{A}),*)$ described by Hattori \cite{H,H-K}. 
 
 \noindent
 (b) Type\! $A_3$\! arrangement.\! This is the system of reflection hyperplanes of type\! $A_3$\! whose complement is well-known to be an Eilenberg-McLane-space.\!\! 

 For (a) and (b), the non-cancellative tuples of the associated monoids are described in \S4.1.2. For (a), we find 3 twins of non-cacellative tuples, but for (b) we have not found a  twin (c.f.\ Question at the end of \S4.1). Then,  in  \S4.3.1 (a), we show that {\it $\Pi$-classes  associated with the 3 twins coincide with the Hattori's $\pi_2$-class with trivial inertia groups}. In \S4.3.2 (b), we ask there does not exist any twin of reduced non-cancellative tuples.\!\!

\smallskip
\noindent
(5) {\bf The $\pi_2$-classes in the classifying space of the monoid.}

For a comparison, in Appendix, starting from an abstract monoid $A^+$ carrying twin non-cancellative tuples,  we construct  second homotopy classes in the classifying space $BA^+$. If $A^+$ is geometric $A^+(W,L.\mathcal{R})$, then we try\! to\! compare\! the\! classes\! with\! the\! geometric\! constructed\! $\Pi$-classes\! in\! (3).\!\!



 \smallskip
 \noindent
(6)   {\bf Problems.}  We still have basic questions: 

(1)  when are the $\Pi$-classes non-vanishing (is there a criterion)?  

 (2) who are the $\Pi$-classes (is there a characterization of them)?  
 
\noindent
These questions are strongly connected with the next question:

 \vspace{-0.05cm}
(3) What semi-positive presentation $(L,\mathcal{R})$ of a space $W$, what geometric liftings $\mathcal{R}_g$ and what twins  give the answer to the above questions (1) and (2). 
See also Footnote 18 and Final Remarks at the end of \S4.

\section{\!Semi-positively presented space $(W,L,\mathcal{R})$}
We introduce in \S2.1 concepts of semi-positively (and homogeneously) presented spaces and the associated monoids.   We give some natural or artificial examples of such spaces in \S2.2.

\subsection{\normalsize The group $A(W,L,\mathcal{R})$ and the monoid $A^+(W,L,\mathcal{R}_m)$}

\begin{defn}
\label{GroupMonoid}  Let $W=\underset{i\in \mathbb{Z}_{\ge0}}{\cup}W^{(i)}$ ($W^{(i)}=i$th skeleton)  be a connected and based CW-complex with the base $W^{(0)}=\{*\}$. 
We call a pair $(L,\mathcal{R})$ a semi-positive presentation of the fundamental group of $W$,\! if\!\! 

1)   $L=\{a_\lambda\}_{\lambda\in\Lambda}$ is a system of free generators of $\pi_1(W^{(1)},*)$,  

2)  $\mathcal{R} =\{R_\mu\}_{\mu\in M}$ is a system of defining relations of the fundamental group $\pi_1(W,*)$ such that each relation $R_\mu$  ($\mu\in M$) has the form $p_\mu=q_\mu$ where $p_\mu,q_\mu\! \in\! L^*(=$the free monoid generated by $L$) are semi-positive, i.e.\  non-negative length, 
 words in the letters $L$. 

If, further, the lengths of words $p_\mu$ and  $q_\mu$ are positive (and equal) for $\mu\in M$, we call the pair $(L,\mathcal{R})$ a positive (and homogeneous) presentation.

\medskip
The space $W$ equipped with semi-positive (resp.\ positive and homogeneous) presentation is called a semi-positively presented space (resp.\   positively and homogeneously presented space).
\end{defn}

\medskip
For a given semi-positively presented space $(W,L,\mathcal{R})$, we set  
\begin{equation}
\label{eq:Pgroup}
A(W,L,\mathcal{R}) :=\langle L\mid \mathcal{R}\rangle.
\end{equation}
Of course, the identification of the free group generated by $L$ with $\pi_1(W^{(1)},*)$ induces the canonical isomorphism $\pi_1(W,*)\simeq A(W,L,\mathcal{R})$.

We associate to the presentation \eqref{eq:Pgroup} a monoid 
\begin{equation}
\label{eq:Pmonoid}
A^+(W,L,\mathcal{R}_m) := \langle L\mid \mathcal{R}_m\rangle^+
\end{equation} 
where 

\noindent
(1) 
$\mathcal{R}_m =\{R_{\mu,m}\}_{\mu\in M}$ is a copy of $\mathcal{R}$ in such manner that each relation $R_\mu: p_\mu\! =\! q_\mu$ in $\mathcal{R}$ is replaced by two relations  $R_{\mu,m}: p_\mu\! \sim\! q_\mu \ \& \ q_\mu \!\sim \! p_\mu$.
\footnote{
  In the description of the  group in \eqref{eq:Pgroup}, we used ``$=$" if two words are equivalent in the group. However, for the equivalence between two words in the monoid \eqref{eq:Pmonoid}, we shall use  ``$\sim$". 
This distinction is necessary, since two non-equivalent elements in the monoid may give arise an equal element in the group (see \S4). Thus, {\it in the fundamental relations $\mathcal{R}_{m}$ for the monoid, we use the notation  ``$\sim$" instead of ``$=$"}. 
In \S3, we shall introduce one more notation: the {\it geometric} fundamental relations $\mathcal{R}_{g}$, where we shall use the notation ``$\sim_{\mathcal{R}_{g}}$".   
}

\noindent
(2)
The notation $\langle L \mid \mathcal{R}_{m} \rangle^+$ in the RHS of \eqref{eq:Pmonoid} means the quotient $L^*/\! \sim$ of the free monoid $L^*:=\sqcup_{l=0}^\infty L^l$ generated by $L$ and divided by the equivalence relation ``$ \sim  $", where ``$ \sim $" is 
given as follows: \footnote
{In \cite{B-S}, the relation ``$\sim$" is called positive equivalent and denoted by 
``$\underset{{}^\cdot}{=}$", c.f.\ \cite{D} (1.10).} 


\smallskip
\noindent
%
%
%
\noindent
Two words $U,V \in L^*$ are equivalent and noted $U\sim V$, if there exists a finite sequence $W_0,W_1,\cdots,W_n$ ($n\ge0$) of elements of $L^*$ such that $W_0=U$,  $W_n=V$, and there are words $A_i,B_i,C_i,D_i\in L^*$  
  for $i=1,\cdots,n$ such that $W_{i-1}=A_iB_iD_i$, $W_i=A_iC_iD_i$ and ``$B_i\sim C_i$" is in $\mathcal{R}_{m}$.  
  
  We shall call each step $W_i\sim W_{i+1}$ an elementary transformation. 
  
  Since $\mathcal{R}_{m}$\! is symmetric,\! the relation\! ``$ \sim $"\!  is also symmetric so that it is 
an equivalence relation. We say that the relation ``$ \sim $" is generated by $\mathcal{R}_{m}$.\!\! 

\medskip
The product structure on $A^+(W,L,\mathcal{R}_m)$ is induced from concatenations of words in $L^*$ (since, if $U_1\!\sim\! U_2$ and $V_1\!\sim\! V_2$ in $L^*$ then $U_1V_1\!\sim\! U_2V_2$ in $L^*$), where the empty word $\phi\in L^0$ plays the role of the unit element and is denoted by $1$. 
We say an element $a\in A^+(W,L,\mathcal{R}_m)$ is left (resp.\ right) divisible by $b\in A^+(W,L,\mathcal{R}_m)$ (or, $b$ divides $a$ from left (resp.\ right)) and denote $b\mid^l a$ (resp.\ $b\mid^r a$) if there are presentative words $U$ and $V$ of $a$ and $b$ and a word $W$ such that $U=VW$ (resp.\ $U=WV$).

\begin{rem}
\label{MonoidGraph}
{\rm  Let us give a paraphrase definition of the monoid  $A^+(W,L,\mathcal{R}_m)$ in terms of graphs as follows. Consider the graph:
\vspace{-0.05cm}
\begin{equation}
\label{eq:MonoidGraph3}
G(L^*,\mathcal{R}_m)
\vspace{-0.05cm}
\end{equation}
 where (i) the set of vertices  is $L^*:=\sqcup_{l=0}^\infty L^l
$, and (ii) there exists an edge between $U,V \in L^*$ if and only if there exists an elementary transformation $U\sim V$.   Then we have a natural isomorphism of monoids 
\vspace{-0.05cm}
\begin{equation}
\label{eq:MonoidGraph}
A ^+(W,L,\mathcal{R}_m)  \quad \simeq \quad   \pi_0(G(L^*,\mathcal{R}_m),1)
\vspace{-0.05cm}
\end{equation}
where the product structure in RHS is defined by the one naturally induced from that on $L^*$ (i.e.\ the concatenation of words).
}
\end{rem}
\medskip
We define the distance between two words $U$ and $V$ in $L^*$ by
\begin{equation}
\label{eq:distance0}
\begin{array}{rl}
d(U,V):= \text{ the distance between $U$ and $V$ in $G(L^*,\mathcal{R}_m)$} \\
\qquad = \text{ the smallest number of elementary transformations} \!\!\!\!\!\!\!\! \\
\  \text{ in sequences in $L^*$ which bring $U$ to $V$.\qquad\qquad }
\end{array}
\end{equation}

Thus, we have: $U\!=\!V  \Leftrightarrow d(U,V)\!=\!0$,   $U\!\sim\! V \Leftrightarrow d(U,V)\!<\!\infty$ and  $U\!\not\sim\! V \Leftrightarrow d(U,V)\!=\!\infty$.\!\!

\smallskip
We shall often (not always) denote a word in $L^*$ by  capital letters and an equivalence class in  $A^+(W,L,\mathcal{R}_m)$ by  small letters. We may 

\noindent
1) confuse capital letters with their equivalence classes, 

\noindent
2) use notation ``$\sim$" to show the equality between two equivalence classes.


\smallskip
We have a natural homomorphism defined by the correspondence of generators, called the localization morphism, 
\begin{equation}
\label{eq:Plocalization}
A^+(W,L,\mathcal{R}_m) \ \rightarrow  \  A(W,L,\mathcal{R}).
\end{equation}
The localization  may not necessarily be injective (see \S4.1). That is, {\it the defining relation for $A^+(W,L,\mathcal{R}_m)$ may be weaker than the homotopy equivalence relation} in $ A(W,L,\mathcal{R})$ (even though the fundamental relations are the same). Therefore, it is a question whether the monoid $A^+(W,L,\mathcal{R}_m)$ has some geometric description. An answer to this question  is given in the next section \S3.2.1.

\medskip
\noindent
{\it Notation.}  If there may  be no possible confusions, we shall denote the group \eqref{eq:Pgroup} and the monoid \eqref{eq:Pmonoid} by $A(W)$ and $A^+(W)$, respectively.

\medskip
\noindent
{\it Note.} 
If the relations in $\mathcal{R}_{m}$ are homogeneous,   equivalent words have the same length, denoted by $\ell$. So, we obtain a degree homomorphism:
\begin{equation}
\label{eq:Degree}
\mathrm{deg} \ :\  \ A(W,L,\mathcal{R}_m) \longrightarrow \ \mathbb{Z}_{\ge0},   \quad W \mapsto \ell(W).
\end{equation}
If $b\mid^l a$ (or $b\mid^r a$) then $\ell(a)\ge\ell(b)$, and then  $a\sim b$ iff $\ell(a)\le\ell(b)$.  However, we shall not use the degree morphism in the present paper.

\subsection{\normalsize  Examples of semi-positively presented spaces} 
\noindent
{\bf 1.  Example of non-homogeneous relation: }

{\it Let $m\!\in\!\mathbb{Z}_{\ge1}$\! be given.\! Consider a 2-dimensional CW-complex $W$, where 0-cells $\!=\!\{*\}$, 1-cells $\!=\!\{a,b\}$, 2-cells $\!=\!\{c\}$ s.t.\  $\partial c\!=\! (ab)^ma$. Then the pair of $L\!:=\!\{a,b\}$ and $\mathcal{R}\!:=\!\{(ab)^ma\!=\!1\}$ defines a semi-positive  space. Set   
$$
A(W)\!=\!\langle a,b \mid (ab)^ma\!=\!1 \rangle \quad \text{ and }\quad A^+(W)\!=\!\langle a,b \mid (ab)^ma\!\sim\!1 \rangle^+.
$$
Then, we have natural isomorphisms: $A^+(W)\!=\!A(W)\!\simeq\!\mathbb{Z}$, and  a homotopy equivalence: 
 $W\simeq S^1$.}
\begin{proof} It is sufficient to show that $A^+(W)\simeq\mathbb{Z}$ and, hence,  $A(W)\simeq\mathbb{Z}$.

We first show that the relation $ab\! \sim \! ba$ holds due to the  elementary transformations:
$ab \! \sim \! ab(ab)^{m}a \! \sim\! (ab)^maba\sim \! ba$.  So, in any positive word in $A^+(W)$, we can move positions of the letters $a,b$ freely inside the word. That is, any word is equivalent to $a^pb^q$ for some $p,q\in\mathbb{Z}_{\ge0}$. 
If $p\ge m+1$ and $q\ge m$, we can factor out the word $(ab)^ma$. Factoring out the relation as far as possible,  the remaining equivalence classes are  in  the next list.
$$
a^pb^q\  \text{for } p\in \mathbb{Z}_{\ge0}, \ 0\le q\le m\!-\!1 \ \text{ and }  \ a^pb^q \ \text{ for }  0\le p\le m, \ q\in\mathbb{Z}_{\ge m} .
$$
We define a monoid morphism $A^+(W)\to \mathbb{Z}$ by assigning $a\mapsto m$ and $b\mapsto -m-1$. Then we see that the list above is exactly bijective to $\mathbb{Z}$.
\end{proof}

\noindent
{\bf 2. Regular orbit spaces for reflection groups:}

The regular orbit spaces of the following examples of reflection groups admit positive and homogeneous presentations: 

 i) Complexfications of finite real reflection groups  \cite{F-N}, \cite{Br}, \cite{D}, \cite{S2}
 
 ii) Complexfications of affine Weyl groups \cite{Ba}, \cite{Du}, 
 
 iii) Unitary reflection groups \cite{Be}, 
 
 iv) Elliptic Weyl groups \cite{S-S}, 
 
 v) Artin groups  in general?. 

\medskip
\noindent
{\bf 3.  Logarithmically free weighted homogeneous polynomials:}

The above examples 2. i), ii) and iv) can also be considered as the complement of associated discriminant loci in the orbit space, where the discriminant is a weighted homogeneous logarithmically free divisors \cite{S1}. The following examples suggest that there exists a wide class of weighted homogeneous logarithmically free polynomials of several complex variables such that the complements of their zero loci carry positive and homogeneous presentations (see Question at the end of \S6 of \cite{S-I}).

  In \cite{Sek}, J. Sekiguchi has listed up 17 weighted homogeneous logarithmically free polynomials of three variables. The fundamental groups of the complements of the zero loci in $\mathbb{C}^3$ of the polynomials were calculated by Zariski-van Kampen method in \cite{S-I} \S4.  Then, all 17 types admit the positive and homogeneous presentation according to some generators in Zariski-pencils in $z$-variable. Among them, 5 are Artin groups and 8 are free abelian groups. However, for the remaining 4 types 
$$
\begin{array}{rcl}
\Delta_{B_{ii}} & := & z(-2y^3+4x^3z+18xyz+27z^2), \\
\Delta_{B_{iv}}&:= & z(9x^2y^2-4y^3+18xyz+9z^2), \\
\Delta_{H_{ii}} &:= & 100x^3y^4+y^5+40x^4y^2z-10xy^3z+4x^5z^2-15x^2yz^2+z^3, \\
\Delta_{H_{iii}} &:= & 8x^3y^4+108y^5-36xy^3z-x^2yz^2+4z^3, 
\end{array}
$$
the associated monoids are not Gaussian so that they cannot be reduced to Artin groups. Also from a view point of exponents, they cannot be reduced to elliptic Artin groups. In particular, it is shown that type $B_{ii}$ case admits essentially two different positive and homogeneous presentations. 

\medskip
\noindent
{\bf Theorem} (Ishibe \cite{I} 3.1)  For any choice of Zariski-van Kampen generator system
$\{a,b,c\}$ (up to a permutation), the fundamental group of type $B_{ii}$ admits only
one of the following two presentations I and II
$$
\begin{array}{rrcl}
 \mathrm{I} \ : & \langle\ a,b,c & \mid & cbb=bba,bc=ab, ac=ca \  \rangle \\
\mathrm{II}: & \langle\ a,b,c & \mid & ababab=bababa, b=c, aabab=baaba \  \rangle
\end{array}
$$

\noindent
{\bf 4.  Real line arrangements in $\mathbb{R}^2$:}

Let $\mathcal{A}$ be a set of $n$ distinct affine lines: $H_i=\{\al_i\!=\!0\}$ ($i=1,\cdots,n<\infty$) in $\mathbb{R}^2$. Set $M(\mathcal{A}):=\mathbb{C}^2\setminus \cup_{H\in \mathcal{A}} H\otimes\mathbb{C}$. Then, $M(\mathcal{A})$ is known to be homotopic to a minimal CW-complex (e.g.\ \cite{Y1}). 

Masahiko\! Yoshinaga\! \cite{Y2} gave an explicit minimal stratification $M(\mathcal{A})$\!$=\! X_0\! \supset\!  X_1\! \supset\! X_2$, using real quadratic surfaces $Im(\al_i\overline{\al}_{i\!-\!1})\! =\!0$ {\small ($i\!=\!1,\cdots,n$)}, and depending on a choice of a generic flag $\mathcal{F}$ in $\mathbb{R}^2$ (giving an ordering of the set $\mathcal{A}$). 
Let $\eta_1,\cdots,\eta_n$ be the generator system of the fundamental group dual to the stratification. Then, the alternative system of generators: $\gamma_i\!=\!\eta_i\eta_{i+1}^{-1}$ ($i\!=\!1,\cdots,n-1$) and $\gamma_n\!=\!\eta_n$ (the  $\gamma_i$  turns once around the line $H_i$, c.f.\ Figure 2.1, (ii)) satisfies  positive homogeneous relations.\footnote{The generator system is homotopic to a Zariski-van Kampen generator system in the pencil $\mathcal{F}^1_{\mathbb{C}}$. However, the relations \eqref{eq:linesPositiveHomogeneous} are not Zariski-van Kampen relations. 
}

\medskip
\noindent
{\bf Theorem} (Yoshinaga \cite{Y2} Theorem 6.5)  The fundamental group of $M(\mathcal{A})$ is isomorphic to the group presented by
\begin{equation}
\label{eq:linesPositiveHomogeneous}
\quad \langle \tilde{\gamma}_1,\cdots,\tilde{\gamma}_n \mid \tilde{\gamma}_1\cdots \tilde{\gamma}_n=\tilde{\gamma}_{i_1(C)}\cdots\tilde{\gamma}_{i_n(C)} \ \ for \ \ C\in {\mathrm ch}_2^\mathcal{F}(\mathcal{A})\rangle . 
\end{equation}
where $\tilde{\gamma}_1,\cdots,\tilde{\gamma}_n$ is the system of free generators corresponding to ${\gamma}_1,\cdots,{\gamma}_n$,  ${\mathrm ch}_2^\mathcal{F}(\mathcal{A})$ is the  set of chambers which are disjoint with the flag $\mathcal{F}^1$, and $(i_1(C),\cdots,i_n(C))$ is a permutation of $(1,\cdots,n)$ associated with $C$.

\bigskip
In the following, we study two typically contrasting examples (a)\! and\! (b) (see \S4.1.2 Example and  \S4.3 Example for the further study of them).\!\!

\medskip
\noindent
{\bf (a)}   Three generic lines $\{H_1,H_2,H_3\}$ in $\mathbb{R}^2$.   

Choose  a generic flag $\mathcal{F}$ and the numbering of chambers as in Fig.\ 2.1, (i). Accordingly, the generators $\{\gamma_1,\!\gamma_2,\gamma_3\}$ are given as in Figure 2.1, (ii).

 \vspace{0.3cm}
\hspace{0.6cm} 
\includegraphics[width=0.7\textwidth]{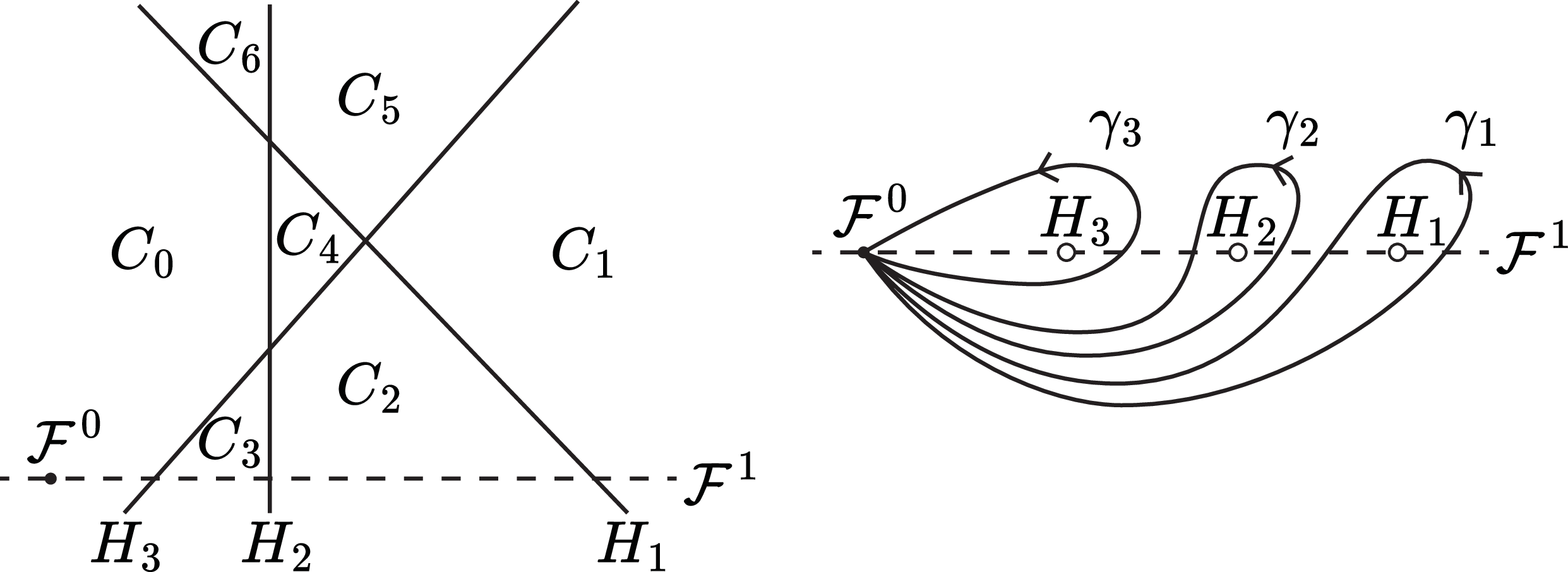}

\vspace{0.1cm}
\centerline{\qquad \ \  Figure 2.1: (i)  \qquad\qquad \qquad\qquad   Figure 2.1: (ii)   \quad }

\vspace{0.2cm}

\noindent
Then we have
\vspace{-0.2cm}
$$
\begin{array}{rcl}
(i_1(C_4),i_2(C_4),i_3(C_4))&=&(1,3,2). \\
(i_1(C_5),i_2(C_5),i_3(C_5))&=&(3,1,2). \\
(i_1(C_6),i_2(C_6),i_3(C_6))&=&(2,3,1).
\end{array}
$$

\noindent
{\bf (b)}   $\mathrm{A}_3$-arrangement (c.f.\ \cite{Y2} Example 6.7.).

We consider an arrangement consisting of 5-lines exhibited in Figure 2.2 (it is a degeneration of \cite{Y2} Fig.5.4), where  the choice of a generic flag $\mathcal{F}$ and the numbering   of chambers are given. 
Accordingly, the generators $\{\gamma_1,\cdots,\gamma_5\}$ of the fundamental group are described in a similar manner as Figure 2.1 (ii) (figure is omitted).

 \vspace{0.3cm}
\hspace{1.9cm} 
\includegraphics[width=0.41\textwidth]{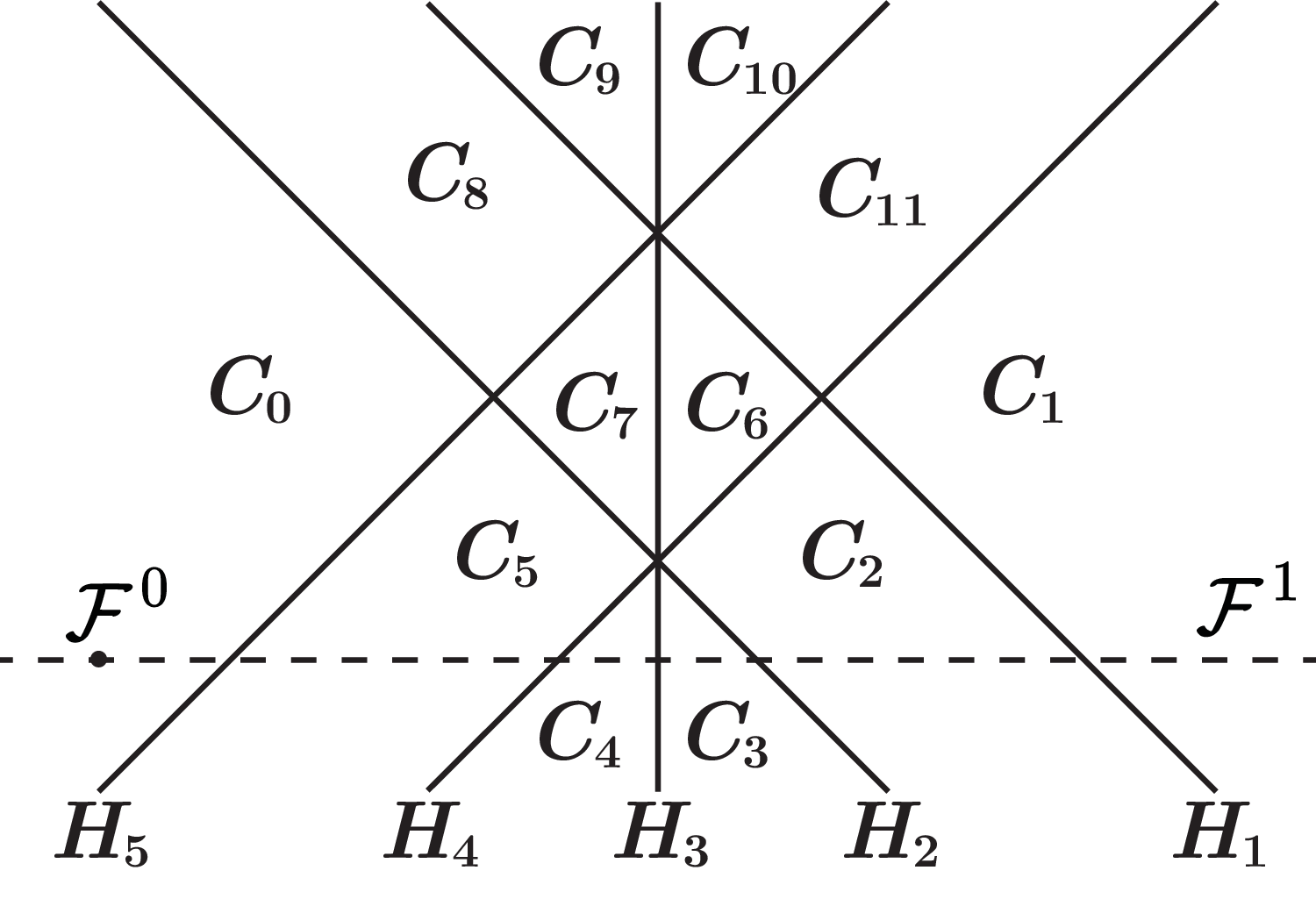}

\vspace{0.1cm}
\centerline{ Figure 2.2:  $A_3$-arrangement     }

\vspace{0.1cm}

\noindent
Then we have
\vspace{-0.1cm}
$$
\begin{array}{rcl}
(i_1(C_6),i_2(C_6),i_3(C_6)),i_4(C_6),i_5(C_6)&=&(1,4,2,3,5). \\
(i_1(C_7),i_2(C_7),i_3(C_7)),i_4(C_7),i_5(C_7)&=&(1,3,4,2,5). \\
(i_1(C_8),i_2(C_8),i_3(C_8)),i_4(C_8),i_5(C_8)&=&(1,3,4,5,2). \\
(i_1(C_9),i_2(C_9),i_3(C_9)),i_4(C_9),i_5(C_9)&=&(3,4,5,1,2). \\
(i_1(C_{10}),i_2(C_{10}),i_3(C_{10})),i_4(C_{10}),i_5(C_{10})&=&(4,5,1,2,3). \\
(i_1(C_{11}),i_2(C_{11}),i_3(C_{11})),i_4(C_{11}),i_5(C_{11})&=&(4,1,2,3,5). \\
\end{array}
$$

\noindent
5. {\bf Warning.}  Let a semi-positive presentation $(L,\mathcal{R})$  of $\pi_1(W,*)$ be given. Then, by choosing words $r_{\mu}, s_{\mu}\! \in\!  L^*$ for $\mu \! \in \! M$, we can make another semi-positive presentation $(L,\mathcal{R}')$ where $\mathcal{R}' \!:= \! \{r_{\mu}p_{\mu} s_{\mu} \! =\! r_{\mu} q_{\mu}s_{\mu}  \mid  \mu\in M\}$.\!  This is in a sense an ``absurd" presentation of  $\pi_1(W,*)$ since $r_{\mu},  s_{\mu}$ ($\mu\in M$) are arbitrary and  do not reflect anything of the topology of $W$. Unfortunately, our formulation of semi-positive presentation of a space $W$ cannot exclude such ``absurd" cases. One would like to find a better formulation of semi-positively presented spaces to avoid such cases (See Final Remarks in \S4).


\vspace{-0.1cm}
\section{Geometric realization of 
 $A^+(W,L,\mathcal{R}_m)$}

\vspace{-0.1cm}
In the present section, we give a geometric description of 
 the monoid $A^+(W,L,\mathcal{R})$  
 (see Proposition \ref{GeometricMonoid}) using a concept of {\it geometric fundamental relations $\mathcal{R}_g$} (Definition \ref{GeoFundRel}), its associated $\mathcal{R}_g$-set \eqref{eq:R^gSet}  and the local inertia group (Def.\ \ref{LocalInertia} and Prop.\ \ref{InertiaAction}).  
 These tools become in \S4 again the tools for describing the $\pi_2$-classes and the global inertia group.
 
\vspace{-0.1cm}
\subsection{\normalsize A geometric description of $A(W,L,\mathcal{R})$}

\vspace{-0.1cm}
We start with a ``geometric description" of the group $A(W)$  by introducing the system $\{[x,y]\}_{x,y\in \mathcal{F}}$  in \S3.1.1 (see  \eqref{eq:Equivalence} and Proposition \ref{PropA}), and by immersing the system in $\pi_2(W,W^{(1)},*)$ in \S3.1.2.

\smallskip
\noindent
{\bf 3.1.1 \quad  The system $\{[x,y]\}_{x,y\in \mathcal{F}}$ of homotopy equivalences}

\smallskip
\noindent
Recall that 
\vspace{-0.04cm}
\begin{equation}
\label{eq:FreeGroup}
\mathcal{F}:=\pi_1({W}^{(1)},*)
\end{equation}
is isomorphic to the free  group generated by $L:=\{a_\lambda\}_{\lambda\in \Lambda}$. The product of two elements $a,b$ is denoted either $ab$ or $a\cdot b$ (when we want to stress the product). The embedding $\iota: W^{(1)} \subset W$ induces a surjective morphism: 
\vspace{-0.04cm}
\begin{equation}
\label{eq:Iota*}
\iota_*\ : \ \mathcal{F} \ \to \ \pi_1(W,*)\simeq A(W)
\end{equation}
whose kernel  is the normal subgroup generated by the fundamental relations $\mathcal{R}$ \S2.1. At this point, we switch the view point: instead of taking the quotient of $\mathcal{F}$ by algebraically by the normal subgroup, we consider a {\it geometric set $[x,y]$ of homotopy equivalence maps} from  $x$ to $y\in \mathcal{F}$:
\vspace{-0.04cm}
\begin{equation}
\label{eq:Equivalence}
\begin{array}{cl}
[x,y] \ :=& \{\text{homotopy equivalence maps }\\
&\quad \text{
\ from $x$ to $y$ in }
W\}\  \big/ \sim
 \end{array}
 \vspace{-0.1cm}
 \end{equation}
where, precisely, the RHS is defined by the following rules  (1) and (2):\footnote
    {To be exact, $x$ and $y$ are not loops but are homotopy equivalent classes in $\mathcal{F}$. However, this ``ambiguity" is absorbed  in the homotopy equivalence $\sim$ in he description (2).
    }

  (1) By a homotopy equivalence map from $x$ to $y$, we mean a continuous map, say $H: [0,1]^2 \to W$, satisfying following a) and b). 
  
  \noindent
a) The restrictions $H|_{[0,1]\times 0}$ and $H|_{[0,1]\times 1}$ represent  the classes 
 $x, y\! \in\! \mathcal{F}$.

\noindent
b) The restrictions $H|_{0\times [0,1]}$ and $H|_{1\times[0,1]}$ are constant $*$-valued.

\smallskip
\noindent
That is: the first variable $u$ in $H(u,v)$ for each fixed $v$ descr-

\noindent 
 ibes a\! loop\! in\! $W$\! based\! at\! $*$ and  the second\! variable $v$ describes

\noindent
homotopy between  the two loops $x$  at $v=0$ and  $y$ at $v\!=\!1$.

\noindent

\vspace{-1.25cm}
\qquad\qquad \qquad \qquad \qquad 
\qquad\qquad \qquad \qquad \qquad 
\qquad\qquad \quad \ \  \
\includegraphics[width=0.12\textwidth]{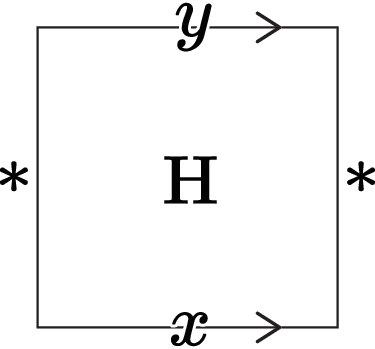}

\vspace{-0.18cm}
\noindent
We draw a figure for $H$ as above, and confuse the figure with $H$.
 
  \smallskip  
   (2) The relation $\sim$ in 
    \eqref{eq:Equivalence} is the homotopy equivalence, i.e.\ $H_1$ and $H_2$ are equivalent if they belong to the same arc-connected component w.r.t.\ compact-open topology of the set of all homotopy equivalence maps $: [0,1]^2 \to W$ from $x$ to $y$ in $W$.

\medskip 
\noindent
An obvious reason of introducing the system $\{[x,y]\}_{x,y\in \mathcal{F}}$ is the following.\!\!

\smallskip 
 \noindent
 {\bf Fact.} {\it Two elements $x,y\in \mathcal{F}$ induce the same element in the group $A(W)$ if and only if $[x,y]\not=\emptyset$. That is, the set $\{(x,y)\mid [x,y]\not=\emptyset\}_{x,y\in \mathcal{F}}$ is the equivalence relation on the set $\mathcal{F}$ defining the quotient group $A(W)$.} 
 
 \smallskip
 

\begin{rem}
\label{CrossedModule}
{\rm  The system $\{[x,y]\}_{x,y\in \mathcal{F}}$ and its subsystem $\{[x,y]_{\mathcal{R}_g}\}_{x,y\in \mathcal{F}^+}$, which we shall introduce in the next \S3.2.1, 
  are the tool  to describe $A(W)$ and $A^+(W)$ geometrically. More than that, the systems are  the main tool in \S4 to describe the second homotopy classes of $W$.  Therefore, in the remaining of \S3.1.1, we analyze in details the basic operations on the system, i.e.\ two product structures ``$\ \circ\ $" and ``$\ \cdot\ $",  and, then in \S3.1.2, we compare the system with the relative homotopy group $\pi_2(W,W^{(1)},*)$ (c.f. crossed module by Whitehead \cite{W}).
}
\end{rem}


\smallskip
 \begin{prop} 
 \label{PropA} {\it
The system $\{[x,y]\}_{x,y\in \mathcal{F}}$ has the following algebraic properties a), b), c), d), e), f)  and g). 
}
\end{prop}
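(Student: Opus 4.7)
The plan is to treat each element of $[x,y]$ as a continuous map $H:[0,1]^2\to W$ satisfying the boundary conditions (1a), (1b), modulo arc-connected equivalence in the compact-open topology, and to define the two operations $\circ$ and $\cdot$ as stacking operations on such squares, then verify all the algebraic laws in the standard way one checks them for the ordinary fundamental groupoid and for relative/double homotopy groups. Concretely, for representatives $H_1\in[x,y]$ and $H_2\in[y,z]$ the vertical composition $H_2\circ H_1\in[x,z]$ is defined by
\[
(H_2\circ H_1)(u,v)\ =\ \begin{cases}H_1(u,2v)&0\le v\le 1/2,\\ H_2(u,2v-1)&1/2\le v\le 1,\end{cases}
\]
and for $H\in[x,y]$ and $H'\in[x',y']$ the horizontal (concatenation) product $H\cdot H'\in[xx',yy']$ is
\[
(H\cdot H')(u,v)\ =\ \begin{cases}H(2u,v)&0\le u\le 1/2,\\ H'(2u-1,v)&1/2\le u\le 1.\end{cases}
\]
The boundary hypothesis (1b), i.e. that the left/right edges are constant $*$, is exactly what makes this second formula land in $[xx',yy']$, and condition (1a) is what makes the first formula land in $[x,z]$.

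Next I would verify well-definedness of both operations on the equivalence classes in \eqref{eq:Equivalence}: a homotopy between representatives on either factor of $\circ$ or $\cdot$ yields a homotopy between the stacked maps by applying the same stacking formula fibrewise in the deformation parameter. With well-definedness in hand, the algebraic properties to be checked are the familiar ones: (i) existence of a vertical identity $1_x\in[x,x]$ given by $H(u,v):=x(u)$ (independent of $v$); (ii) existence of the horizontal identity $1_{\text{triv}}\in[1,1]$ given by the constant map at $*$; (iii) associativity $\circ$ and $\cdot$ up to the equivalence $\sim$, proved by the standard reparametrization homotopies; (iv) vertical inverses $H^{-1}(u,v):=H(u,1-v)$ in $[y,x]$, with $H^{-1}\circ H\sim 1_x$ and $H\circ H^{-1}\sim 1_y$ given by the usual retraction of $[0,1]$ onto $0$; and (v) the interchange / middle-four exchange law
\[
(H_2\circ H_1)\cdot(H_2'\circ H_1')\ \sim\ (H_2\cdot H_2')\circ(H_1\cdot H_1'),
\]
whenever both sides are defined, proved by exhibiting an explicit homotopy between the two subdivisions of $[0,1]^2$ into four rectangles. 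I would then list whatever naturality-type statement, under $\cdot$-conjugation of a class in $[x,y]$ by an element of $\mathcal F$, is needed (this typically appears as property (g) because it underlies the forthcoming local inertia group in Def.~\ref{LocalInertia}).

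The only genuinely nontrivial step is the interchange law; associativity, identities, and inverses are literally the classical proofs for $\pi_2$. For the interchange law, the key point is that the two stacking orders differ only by a reparametrization of $[0,1]^2$ that is the identity on the boundary — here the constant-$*$ condition on the left and right edges of each square is essential to guarantee that the required piecewise-linear homeomorphism of the square sends each of the four sub-rectangles to the corresponding one with matching boundary data, so a straight-line homotopy between the two parametrizations gives the required $\sim$. I expect this to be the main technical obstacle, but it is the same obstacle met in the construction of the Whitehead crossed module / relative $\pi_2$, which is the structure the remark after the statement compares to; once the boundary-constancy condition (1b) is used carefully, the standard proof goes through.
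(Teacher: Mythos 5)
Your treatment of a)--f) is essentially the paper's own argument: $\circ$ and $\cdot$ are concatenations in the second and first variable, identities are $1_x(u,v)=x(u)$, $\circ$-inverses come from $v\mapsto 1-v$ with the standard shrinking homotopy, and the interchange law holds because (with matching representatives) both sides are given by one and the same map on the subdivided square, so no reparametrization homotopy is really needed. Two small points there: you should say explicitly that for $\circ$ one first chooses representatives whose middle edges literally agree (the classes only pin the middle edge down up to homotopy in $W^{(1)}$, so your piecewise formula is otherwise discontinuous), and you never actually state item e), the $\cdot$-inverse $[\tfrac{1}{H}]\in[x^{-1},y^{-1}]$ obtained from $u\mapsto 1-u$ with $[H]\cdot[\tfrac{1}{H}]=1_e$; it is proved exactly like c), but it is part of the proposition.

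The genuine gap is item g). You guessed it is a naturality statement about $\cdot$-conjugation by elements of $\mathcal{F}$ feeding into Definition~\ref{LocalInertia}, and you left it as ``whatever is needed''; that is not what g) asserts and it does not follow from the groupoid formalities you verified. Property g) is the concrete claim that the five specific maps $H_{**}^{**}$, $H_{aa^{-1}}^{**}$, $H_{**}^{aa^{-1}}$, $H_{aa^{-1}}^{**}\circ H_{**}^{aa^{-1}}$ and $H_{aa^{-1}}^{aa^{-1}}$ (squares taking constant values along the indicated foliations, with boundary loops of the form $aa^{-1}$ rather than the constant loop) are all homotopy equivalent, i.e.\ they represent the ``same'' trivial class in $[e,e]=[aa^{-1},e]=[e,aa^{-1}]=[aa^{-1},aa^{-1}]$. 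The paper proves this by constructing two explicit one-parameter families $A$ and $B$ which progressively shrink or grow the arcs $a|_{[0,1-t]}$ and $a|_{[0,t]}$ of the loop $a$ (Figures 3.3--3.4), and then chains $A$, $A^{-1}$, $B$, $B^{-1}$ and their $\circ$-concatenations to connect all five maps. This is not a formal consequence of a)--f): it is the step that later makes the $\cdot$-division well behaved (it is invoked in the proof of Proposition~\ref{QuotientComposition} to collapse the rows containing $\tilde a^{-1}\tilde a$ and $\tilde d\tilde d^{-1}$), so omitting it leaves the proposition, and the constructions depending on it, unproved. You would need to supply these explicit homotopies (or an equivalent argument) to close the proof.
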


\vspace{0.1cm}
\noindent
{\it
a)  For $[H_1]\in [x,y]$ and $[H_2]\in [y,z]$  ($x,y,z\!\in \! \mathcal{F}$), we define a product $[H_1]\circ[H_2]\in [x,z]$ by concatenating  the second variable (in the sense given in the proof, see Figure 3.1):
\begin{equation}
\label{eq:circ}
\circ\ : \ \ [x,y] \times [y,z] \ \rightarrow\  [x,z], \  [H_1]\times [H_2] \mapsto [H_1]\circ[H_2] .
\end{equation}
Then the  product ``\ $\circ$" is associative.

\smallskip
b)  For all $x\in \mathcal{F}$, $[x,x]$ contains an element $[1_x]$, called the identity, such that, for any $[H]\in [x,y]$, we have
$
[1_x]\circ[H] \ \sim \ [H] \ \sim \ [H]\circ [1_y]
$.

\smallskip
c) For $[H]\in [x,y]$ ($x,y\in \mathcal{F})$, by reversing the second variable of $H$, we obtain an element $[H]^{-1}\in [y,x]$ so that we obtain a bijection map: 
\begin{equation}
\label{eq:circInverse0}
[x,y] \ \overset{\sim}{\longrightarrow} \ [y,x], \quad  [H] \ \mapsto \ [H]^{-1} 
\end{equation}
such that 
$
[H]\circ[H]^{-1}\!=\!1_x$ and $[H]^{-1}\circ[H]\! = \! 1_y
$.\!\!

\smallskip
d)  For $[H]\in [x,y]$ and $[H']\in [z,w]$  ($x,y,z,w\!\in \! \mathcal{F}$), we define product $[H]\cdot [H'] \! \in\! [xz,yw]$ by concatenating  the first variable (see Figure 3.1):
\begin{equation}
\label{eq:cdot}
\cdot \ : \ [x,y] \times [z,w] \ \to \ [xz,yw], \  [H]\times [H'] \mapsto [H]\cdot [H'] .
\end{equation}
Then the  product ``\ $\cdot$" is associative. 

e)  For $[H]\in [x,y]$ ($x,y\in \mathcal{F})$, by reversing the first variable of $H$, we obtain an element $[\frac{1}{H}]\in [x^{-1},y^{-1}]$ so that we obtain a bijection map: 
\vspace{-0.1cm}
\begin{equation}
\label{eq:LoopInverse}
[x,y] \ \overset{\sim}{\longrightarrow} \ [x^{-1},y^{-1}], \quad  [H] \ \mapsto \ [{\frac{1}{H}} ] 
\vspace{-0.1cm}
\end{equation}
such that 
$
[H]\cdot [\frac{1}{H}]=[\frac{1}{H}]\cdot [H] = \! 1_e \in [e,e]$ where $e\in \mathcal{F}$ is the unit element. In particular, we have the equality:  $[\frac{1}{1_x}]=[1_{x^{-1}}]$ for any $x\in \mathcal{F}$. 

f) For $[H_1],[H_2], [H_1'],[H_2']$, we have the following commutativity.
\begin{equation}
\label{eq:CdotCirc}
([H_1]\circ[H_2]) \cdot ([H_1']\circ[H_2']) =
([H_1]\cdot[H_1']) \circ ([H_2]\cdot[H_2']) .
\end{equation}

g)  For a  loop $a\!\in\! Loop(W^{(1)},*)$, consider 5 maps, say $H_{**}^{**}$, $H_{aa^{-1}}^{**}$, $H_{**}^{aa^{-1}}$, $H_{aa^{-1}}^{**}\circ H_{**}^{aa^{-1}}$ and, $H_{aa^{-1}}^{aa^{-1}}$,
   defined on the following 5 figures\footnote
   {For the meaning of the notation $H_{aa^{-1}}^{**}\circ H_{**}^{aa^{-1}}$, see the proof of a).
}

\hspace{-0.9cm} 
 \includegraphics[width=0.92\textwidth]{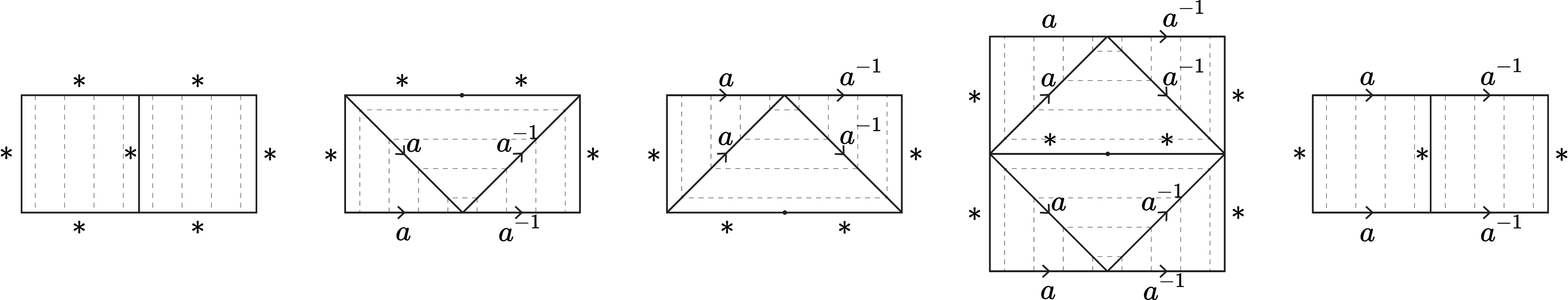}

\noindent
\vspace{-0.15cm}
\hspace{-0.4cm}
{\rm Figure: $H_{**}^{**}$, \qquad\quad  $H_{aa^{-1}}^{**}$, \ \qquad \quad \ $H_{**}^{aa^{-1}}$, \qquad $H_{aa^{-1}}^{**}\circ H_{**}^{aa^{-1}}$, \qquad  $H_{aa^{-1}}^{aa^{-1}}$,}

\bigskip
\noindent
by taking constant values  along the dotted lines. Then, all the maps are homotopy equivalent to each other. That is, they define the same elements  $[H_{**}^{**}]\!=\! [H_{aa^{-1}}^{**}] \!=\! [H_{**}^{aa^{-1}}]\! =\! [H_{aa^{-1}}^{**}\circ H_{**}^{aa^{-1}}]\! =\! [H_{aa^{-1}}^{aa^{-1}}]$ in the same group $[e,e]= [aa^{-1},e]=[e, a a^{-1}]=[aa^{-1}, a a^{-1}]=[aa^{\!-1}\!, a a^{\!-1}]$,\! respectively.\footnote{
There are three notational remarks: (1) here, we shall confuse $a$ and $a^{\!-1}$\! with their images in $\mathcal{F}$, in that case, $aa^{\!-1}\!=e$, (2) we shall denote by $*$ also the constant loop staying at $*$, which represent the unit $e\in\mathcal{F}$, and (3) the domain of the 5 maps for the first variable ``$u$" is the interval $[0,2]$ instead of $[0,1]$. This abuse may be allowed by rescaling ``$u$" to ``$2u$".
}

\noindent
}
 \begin{proof} 
 \!a) Choose representative maps $H_1$ and $H_2$ for $[H_1] \!\in \! [x,y]$ and $[H_2] \!\in \! [y,z]$ such that the restrictions $H_1\! \mid_{[0,1]\times\{1\}}$ and $H_2\! \mid_{[0,1]\times\{0\}}$ are the same representative of $y$.
 Then, a representative of $[H_1] \circ [H_2]$ is given by  
 the map  obtained by the map $H_1\circ H_2$ obtained by concatenating the second variable $v$ of $H_1(u,v)$ from $x$ to $y$ and that of $H_2(u,v)$ from $y$ to $z$ (see Figure 3.1). 
 The homotopy class  of $ [H_1\circ H_2]$ does not depend on representatives\! {\small $H_1$\! and\! $H_2$}.\! The\! associativity\! follows\! from\! the\! concatenation.\!\!
 
 \vspace{0.2cm}
\hspace{2.2cm} 
\includegraphics[width=0.45\textwidth]{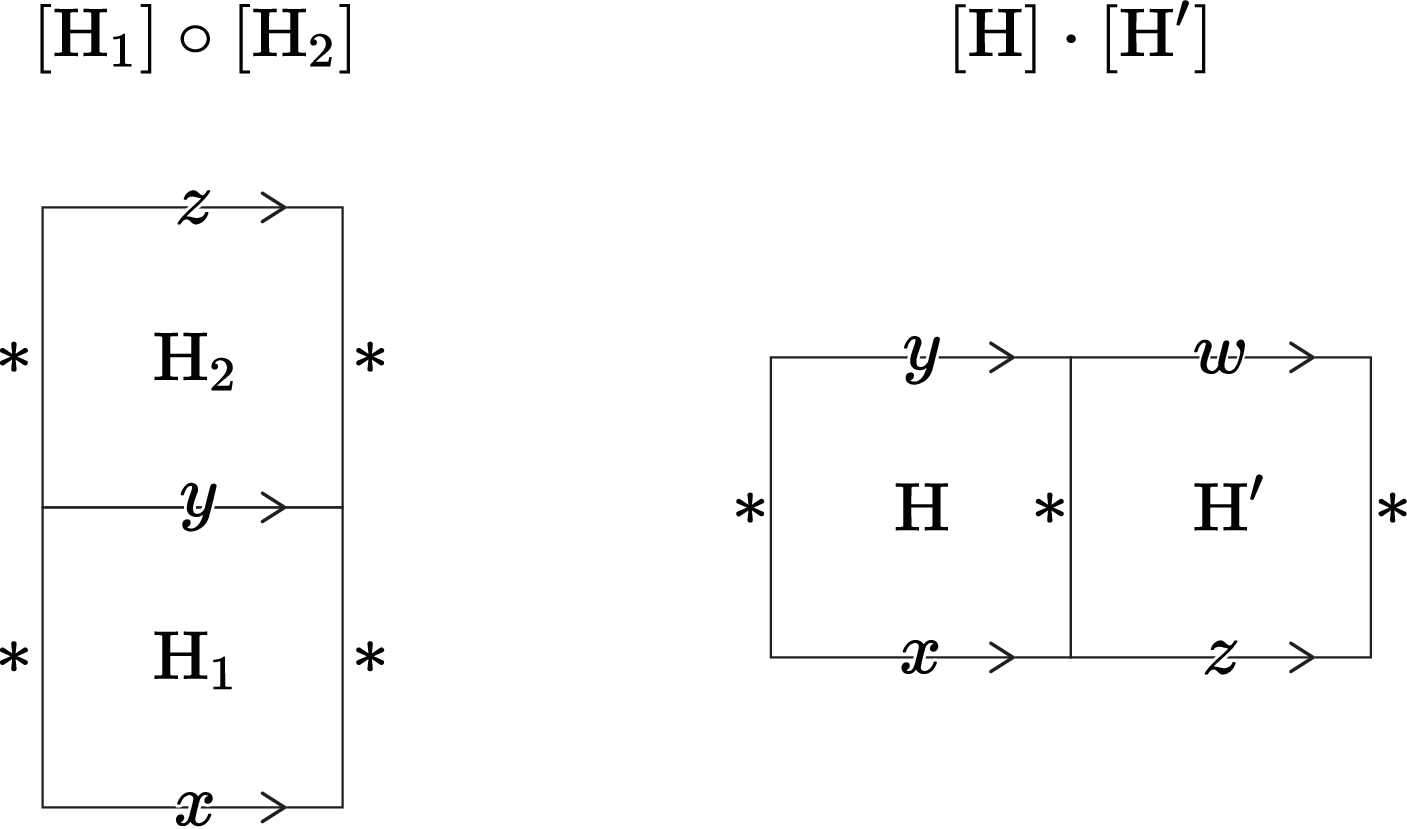}

\centerline{\ Figure 3.1:  \quad Products \  ``\ $\circ$\ " \ and \ ``\ $\cdot$\ " . \qquad }

\vspace{0.4cm}

b) $1_x:[0,1]^2\to {W}^{(1)}$ is a map given by $1_x(u,v):=x(u)$ for a representative $x:[0,1]\to W^{(1)}$ of  $x$
  (we use the same notation ``$x$").
  
 c) Set a map 
 \vspace{-0.2cm}
 \begin{equation}
 \label{eq:circInverse}
 H^{-1}(u,v):=H(u,1-v)
 \end{equation}
 by reversing the second variable,  and $[H]^{-1}:=[H^{-1}]$ 
 A homotopy equivalence (parametrized by $t\in[0,1]$) from $1_x$ to $H\circ H^{-1}$ is given by the concatenation of the second variables of $H(u,tv)$ and $H(u,t(1-v))$.

d)  Choose representatives $H$ and $H'$ for $[H] \!\in \! [x,y]$ and $[H] \!\in \! [z,w]$.
 Then, a representative of $[H] \cdot [H']$ is given by  
 the map  obtained by $H\cdot H'$ concatenating the first variable $u$ of $H(u,v)$ from $x$ to $y$ and that of $H'(u,v)$ from $z$ to $w$ (see Figure 3.1) . 
 The homotopy class  of the concatenation $ [H\cdot H']$ does not depend on the representatives $H$ and$H'$. The associativity is clear from the concatenation.

e) We define $[\frac{1}{H}]$ as the class of the map $\frac{1}{H}(u,v):=H(1-u,v)$. A homotopy equivalence (parametrized by $t\in[0,1]$) from $1_e$ to $H\circ \frac{1}{H}$ is given by the concatenation of the first variable of $H(tu,v)$ and $H(t(1-u),v)$.

f) Both hand sides of \eqref{eq:CdotCirc} are presented by the same map given in the following figure. 

\centerline{\includegraphics[width=0.23\textwidth]{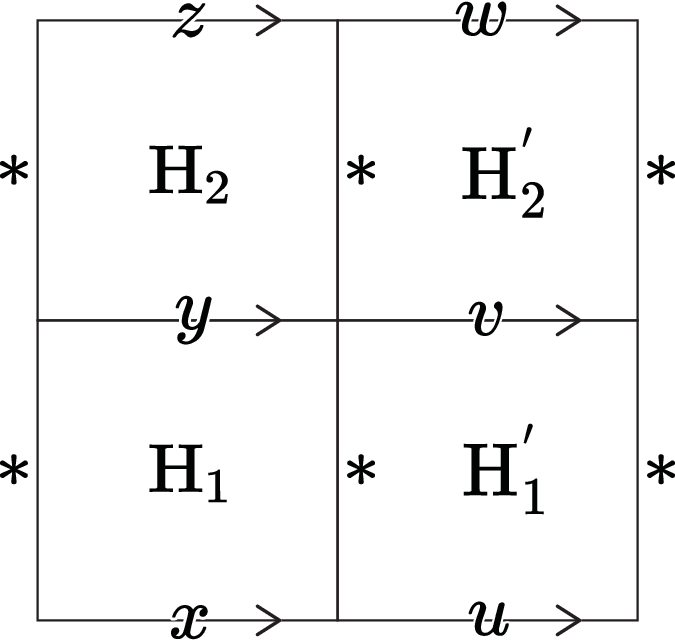}}

\vspace{0.1cm}
\centerline{ Figure 3.2 : {\footnotesize $([H_1]\circ[H_2]) \cdot ([H_1']\circ[H_2']) =
([H_1]\cdot[H_1']) \circ ([H_2]\cdot[H_2']) $}.}

\medskip
g)  We first introduce two homotopy equivalences:  $H_{aa^{-1}}^* \overset{A}{\Rightarrow} H_{**}^{**}$  and $H_{aa^{-1}}^*\overset{B}{\Rightarrow} H_{aa^{-1}}^{aa^{-1}}$ as one  parameter families in $t\in [0,1]$ as follows.\footnote
{Be cautious that the notations $\overset{A}{\Rightarrow}$ and $\overset{B}{\Rightarrow}$ look  a bit misleading, since the homotopy equivalence relation is symmetric. This is caused by the orientation of the variable $t$ defining the homotopy equivalence. In fact, if we reverse the variable $t$ of $A$ and $B$ to the variable $1-t$, we obtain the homotopy equivalences in the opposite direction, respectively. 
}

\vspace{0.2cm}
\hspace{-0,5cm} 
\includegraphics[width=0.38\textwidth]{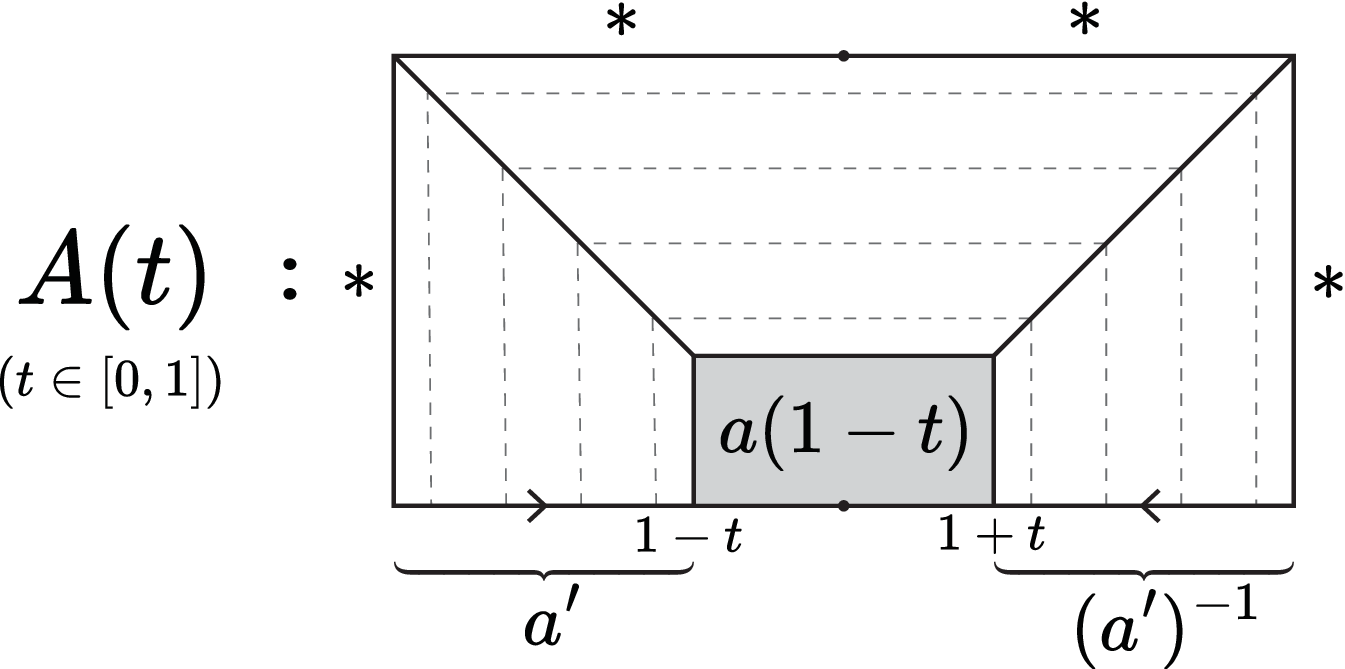}


\vspace{-2.7cm}
\hspace{4.8cm} 
\includegraphics[width=0.38\textwidth]{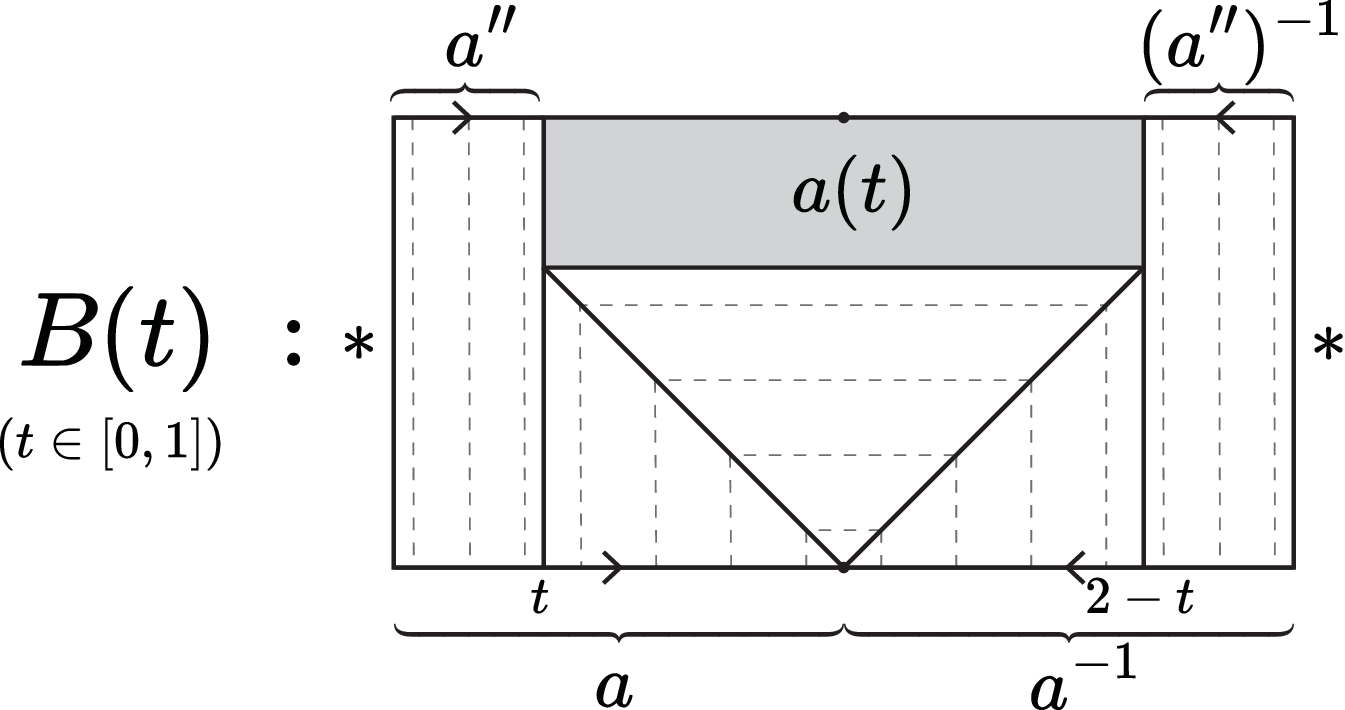}

\noindent
\  \ \ Figure 3.3: \ {\footnotesize $H_{aa^{-1}}^{**}\overset{A}{\Rightarrow} H_{**}^{**} $} \qquad \qquad \ \ 
 Figure 3.4: \ {\footnotesize $ H_{aa^{-1}}^{**}\overset{B}{\Rightarrow}  H_{aa^{-1}}^{aa^{-1}}  $}

\bigskip
\noindent
where $a'$ and $a''$ are the arcs of the loop $a$ obtained by the restrictions $a|_{[0,1-t]}$ and  $a|_{[0,t]}$, respectively, and the maps $A(t)$ and $B(t)$ take constant values $a(1-t)$ or $a(t)$ on each grey zone, respectively.


\smallskip
Then, we obtain the following list of homotopy equivalences:
$$
\begin{array}{ccccccccc}
\qquad \quad H_{**}^{**} & \overset{A}{\Longleftarrow} & H_{aa^{-1}}^{**} & \overset{B}{\Longrightarrow} & H_{aa^{-1}}^{aa^{-1}} \qquad \qquad\\
\qquad \quad H_{**}^{**} & \overset{A^{-1}}{\Longleftarrow} & H_{**}^{aa^{-1}} & \overset{B^{-1}}{\Longrightarrow} &  H_{aa^{-1}}^{aa^{-1}}\qquad \qquad  \\
\!\!\! H_{**}^{**} \sim H_{**}^{**}\! \circ\! H_{**}^{**} & \!\! \overset{A\circ A^{-1}\!\!}{\Longleftarrow} \!\! & \!H_{aa^{-1}}^{**} \! \circ \! H_{**}^{aa^{-1}} \! & \!\! \overset{B\circ B^{-1}\!}{\Longrightarrow} \!\! &\! H_{aa^{-1}}^{aa^{-1}} \!\circ\! H_{aa^{-1}}^{aa^{-1}} \sim H_{aa^{-1}}^{aa^{-1}}
\end{array}
$$
Here we mean by $A^{-1}$ the one parameter homotopy equivalence where the second variable $v$ is reversed as in \eqref{eq:circInverse}, and by $A\circ A^{-1}$ the homotopy equivalence obtained as in a) by concatenating the second variable $v$ (note that $[A\circ A^{-1}]$ is a one parameter family of the identities).

\medskip
\noindent
All these together gives the homotopy equivalences among all 5 maps.
 \end{proof}
 
\begin{rem}
{\rm For the purpose of the proof of g) of Proposition, the homotopy equivalences  $H_{aa^{-1}}^{**}\overset{A}{\Rightarrow} H_{**}^{**} $ and $ H_{aa^{-1}}^{**}\overset{B}{\Rightarrow}  H_{aa^{-1}}^{aa^{-1}}$ are sufficient. However, we may additionally consider a homotopy equivalence:

\hspace{2.5cm} 
\includegraphics[width=0.35\textwidth]{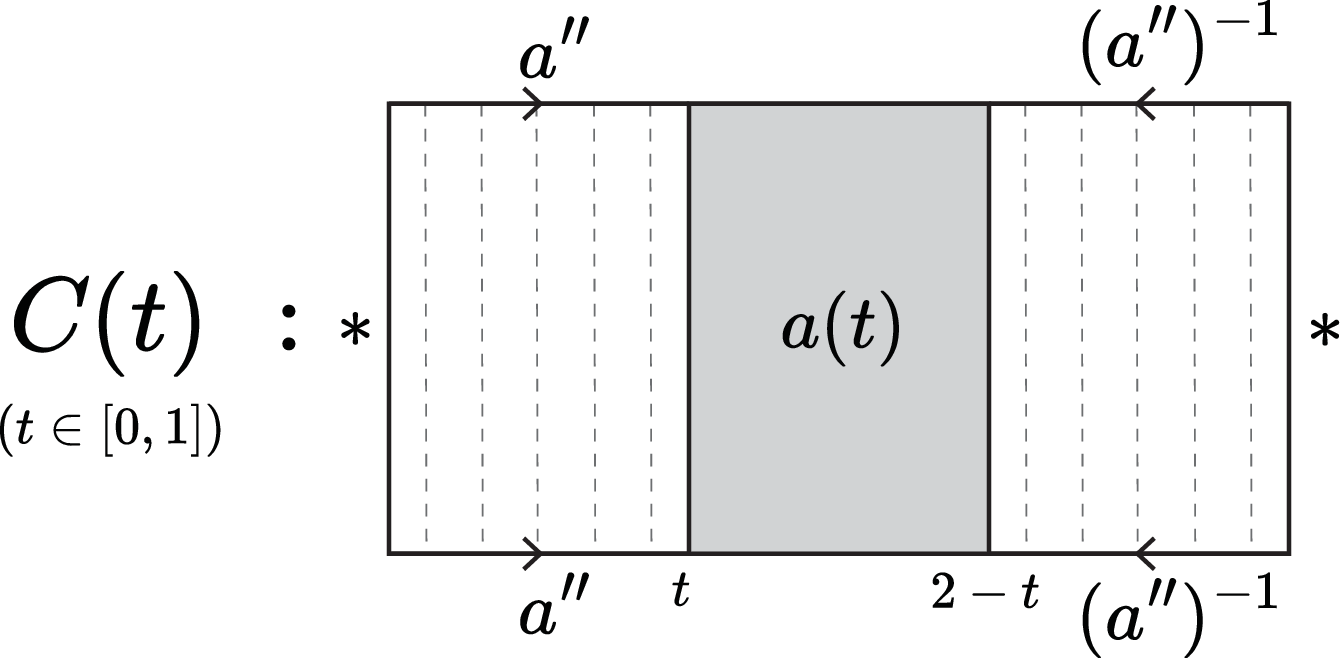}

\centerline{Figure 3.5: \ $H_{**}^{**}\overset{C}{\Rightarrow} H_{aa^{-1}}^{aa^{-1}} $}

\medskip
\noindent
Thus, we obtain two homotopy equivalences from 
$H_{**}^{**}$ to $H_{**}^{**} $:  $(\overset{A}{\Rightarrow})^{-1}(\overset{B}{\Rightarrow})(\overset{C}{\Rightarrow})^{-1}$ and $(\overset{A^{-1}}{\Rightarrow})^{-1}(\overset{B^{-1}}{\Rightarrow})(\overset{C}{\Rightarrow})^{-1}$. We ask who are they
  in the fundamental group of the space of homotopy equivalence maps from $H_{**}^{**}$ to itself? }
\end{rem}
 
 \medskip
 As an immediate consequence of Proposition \ref{PropA}, we have:

\begin{cor}
\label{SimpleTransitive}

1) If $[x,y]\not=\emptyset$, the left (resp.\  right) action of $[x,x]$ (resp.\ $[y,y]$) on $[x,y]$ is simple and transitive.  

2)  The right (resp.\ left) ``$\ \cdot$"-multiplication of $ 1_a$  induces an associative right (resp.\ left) bijective action of  $a\in \mathcal{F}$ on the system $\{[x,y]\}_{x,y\in \mathcal{F}}$ to the system $\{[ax,ay]\}_{x,y\in \mathcal{F}}$ (resp.\ $\{[xa,ya]\}_{x,y\in \mathcal{F}}$), which, instead of writing ``$\cdot 1_a$ (resp.\ ``$1_a\cdot$"), we sometimes write $\cdot a$ (resp.\ $a\cdot$).
\end{cor}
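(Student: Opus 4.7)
The plan is to observe that Corollary \ref{SimpleTransitive} is a purely formal consequence of the axioms a)--f) of Proposition \ref{PropA}: the system $\{[x,y]\}_{x,y\in \mathcal{F}}$ behaves, with respect to ``$\circ$", like the arrow-set of a groupoid with object set $\mathcal{F}$ (after identifying empty hom-sets with ``no arrow"), and the ``$\cdot$"-product with units $1_a$ encodes a right (resp.\ left) groupoid action of $\mathcal{F}$ on this groupoid. Once this is made explicit, both assertions reduce to standard verifications.

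For part 1), fix $[H]\in[x,y]$, which exists by hypothesis. Given any $[H_1]\in [x,y]$, define $[K]:=[H_1]\circ [H]^{-1}$, which lies in $[x,x]$ by a) and c). Applying a) and the identities from b) and c), we get $[K]\circ [H]=[H_1]\circ ([H]^{-1}\circ [H])=[H_1]\circ 1_y\sim [H_1]$, proving transitivity. For simplicity, if $[K_1]\circ [H]\sim [K_2]\circ [H]$ with $[K_i]\in[x,x]$, then post-composing with $[H]^{-1}$ and again using a), b), c) yields $[K_1]\sim [K_2]$. The right action of $[y,y]$ on $[x,y]$ is handled symmetrically.

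For part 2), given $a\in\mathcal{F}$ and $[H]\in [x,y]$, the class $[H]\cdot 1_a\in [xa,ya]$ is defined by d). Associativity of the action, i.e.\ $([H]\cdot 1_a)\cdot 1_b\sim [H]\cdot 1_{ab}$, follows from the associativity in d) combined with the elementary observation that $1_a\cdot 1_b\sim 1_{ab}$: indeed, both are constant in the second variable and their first variables are respective parametrizations of the concatenated loop $ab$, so they are homotopy equivalent (rel.\ endpoints) inside $W^{(1)}$. To see that $[H]\mapsto [H]\cdot 1_a$ is bijective, note that by e) we have $1_a\cdot 1_{a^{-1}}\sim 1_e$, so right multiplication by $1_{a^{-1}}$ is a two-sided inverse of right multiplication by $1_a$, modulo the identification $1_e\cdot [H']\sim [H']$ coming from b) and the interpretation of the unit of $\cdot$. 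The left action is completely symmetric.

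The only delicate point is the identity $1_a\cdot 1_b\sim 1_{ab}$ and the normalization $1_a\cdot 1_{a^{-1}}\sim 1_e$, both of which are statements purely in $W^{(1)}$; the second is essentially part g) of Proposition \ref{PropA} restricted to the level $v=$ const., so it is already available. Once these are in hand, the rest is bookkeeping with the axioms and contains no real obstacle.
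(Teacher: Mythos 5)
Your argument is correct and is exactly the route the paper intends: Corollary \ref{SimpleTransitive} is stated there as an ``immediate consequence'' of Proposition \ref{PropA} with no separate proof, and your groupoid-style verification (transitivity and simplicity via $[H_1]\circ[H]^{-1}$ and cancellation with $[H]^{-1}$, bijectivity of $\cdot\,1_a$ via $\cdot\,1_{a^{-1}}$) just makes that formal derivation explicit. Note only that the identities $1_a\cdot 1_{a^{-1}}=1_e$ and $[\tfrac{1}{1_a}]=[1_{a^{-1}}]$ are already supplied by part e) of Proposition \ref{PropA}, so you do not even need to appeal to part g).
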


 \smallskip
 \noindent
 {\bf 3.1.2 \quad Immersion $\pi$ of  $\{[x,y]\}_{x,y\in \mathcal{F}}$ into  $\pi_2(W, W^{(1)},*) $}
 
 \smallskip
 The relationship of the sets $[x,y]$ with the homotopy groups of $W$ is given by immersing \footnote
 {We use the term ``immersion", since, as we shall see immediately below, that the map $\pi$ is injective in the ``local level" $[x,y]$ but is not injective in the ``global level" $\cup_{x,y\in \mathcal{F}}[x,y]$.}
 them in the relative homotopy group $\pi_2(W, W^{(1)},*) $ where the immersion depends on some additional data (see Footnote 13).

\smallskip
Recall 
the homotopy long exact sequence:
$$
\begin{array}{l}
\!\!\!\!\overset{\partial_2}{\to} \pi_2(W^{(1)},*) 
\to \pi_2(W,*)  \to  \pi_2(W, W^{(1)},*) 
\overset{\partial_1}{\to} \pi_1(W^{(1)},*) \overset{\iota_*}{\to}  \pi_1(W,*) \to \! 1
\end{array}
$$
where the first term $\pi_2(W^{(1)},*)$ is the unit group $1$. Then we obtain {\it a bijection of $[x,y]$ with the subset of  the second relative homotopy group}:\footnote
{Actually, there are 4 immersion maps $[x,y]\to \pi_2(W, W^{(1)},*)$ depending on a choice of orientation on $[0,1]^2$ (clockwise or counter-clockwise) and on a choice of base parts of $[0,1]^2$ (either $0\times[0,1]$ or $1\times[0,1]$). Accordingly, $[x,y]$ is bijective to the $\partial_1^{-1}$  image of either $xy^{-1}, yx^{-1}, x^{-1}y$ or $y^{-1}x$ (which are, of course,  naturally bijective to each other).
}
\begin{equation}
\label{eq:immersion}
  \pi\ :\ [x,y] \quad \overset{\sim}{\longrightarrow} \quad \partial_1^{-1}(xy^{-1}) \quad \subset \ \pi_2(W, W^{(1)}, *) 
\end{equation}
by regarding a representative $H$ of $[H]\in [x,y]$ as a map between triples:
\begin{equation}
\label{eq:Relativisation}
H\ :\ ([0,1]^2,\partial([0,1]^2),\{0\} \!\times \! [0,1]) \rightarrow
(W, W^{(1)},*)
\end{equation}
where  we choose 

(1) the counter-clockwise orientation on $\partial[0,1]^2$, 

(2) the set $\{0\} \!\times \! [0,1]$ as the base part of $[0,1]^2$.
\footnote
{According to our choice,  the set $[xw,yw]$ $\forall w\!\in\! \mathcal{F}$ is mapped on to the same set $\partial_1^{-1}\!(xy^{\!-1})$.\!
}

\noindent
{\it Proof.} We have only to identify the equivalence relation ``$\sim$" of defining $[x,y]$ and that of $\partial_1^{-1}(xy^{-1})$. Details are left to the reader. $\Box$

\smallskip
The immersion $\pi$ \eqref{eq:immersion} maps the unit element $[1_x]\in [x,x]$ ($x\in\mathcal{F}$) to the unit element $e$ of the relative homotopy group. Further, {\it by the immersion,  the ``$\ \circ$"-product structure \eqref{eq:circ} and the ``$\ \circ$"-inverse structure \eqref{eq:circInverse0} are identified with the corresponding product structure ``\ $\cdot_2 $" and inverse in the relative homotopy group. That is,  following diagrams commute. }
\begin{equation}
\label{eq:twoProducts}\!\!\!\!\!\!\!\!\!\!\!\!
\begin{array}{ccccccc}
\vspace{0.2cm}
\!\!\!\!\!\!\!\!\!\!\!\!\!\!\!\!\!\!\!\!\!\!\!\!\!\! [x,y] \!&\!\!\!\! \overset{\sim}{\rightarrow} \!\! &\! \partial_1^{-1}(xy^{-1})&\!\!\!&
[x,y] \! \times \![y,z] \!& \!\!\overset{\sim}{\rightarrow} \!\!&\! \partial_1^{- \!1}(xy^{-1}) \! \times \! \partial_1^{-1}(yz^{-1})\!\!\!\!\!\!\!\!\!\!\!\!\!\! \!\!\\
\vspace{0.2cm}
\!\!\!\!\!\!\!\!\!\!\!\!\!\!\! \downarrow (-)^{-1}\!\!\!\!\!\!\!& &\quad \downarrow (-)^{-1}\!\!\!\! & & \downarrow \circ&&\ \ \downarrow \cdot_2 \\
\!\!\!\!\!\!\!\!\!\!\!\!\!\!\!\!\!\!\!\!\!\!\!\!\! [y,x]\!\! &\!\!\!\! \overset{\sim}{\rightarrow} \!\! &\! \partial_1^{-1}(yx^{-1}) && [x,z] &\!\! \overset{\sim}{\rightarrow} \!\! &\ \ \partial_1^{-1}(xz^{-1})
\end{array}
\end{equation}
\begin{proof} A homotopy $h$ from the unit element $e$ to $1_x$  is given by $h(u,v,t)$ $:=x(ut)$ for $(u,v,t)\in[0,1]^3$. 

We verfy that the product ``$\circ$" given in Proposition \ref{PropA} is compatible with that ``$\cdot_2$" in the relative homotopy group $\pi_2(W, W^{(1)},*)$. 
%
Let $H_1$ and $H_2$ be maps from $S_1=[0,1]^2$ and $S_2:=[0,1]^2$ to $W$ representing the classes  $[H_1]\!\in \! [x,y]$ and $[H_2] \! \in \! [y,z]$, respectively (without a loss of generality, we can choose the representatives satisfying $H_1|_{[0,1]\times\{1\}}\!=\!H_2|_{[0,1]\times\{0\}}$). Then, the class $[H_1]\! \circ \! [H_2]$ is given by  the map defined on the patching of $S_1$ and $S_2$ along the upper edge $[0,1]\times\{1\}$ of $S_1$ with the lower edge $[0,1]\! \times \! \{0\}$ of $S_2$ in opposite direction gives, where the union of the left edges  $\{0\}\! \times \! [0,1]$ of $S_1$ and $S_2$ is considered as the base point. 

\vspace{0.15cm}
\hspace{2.2cm} 
\includegraphics[width=0.40\textwidth]{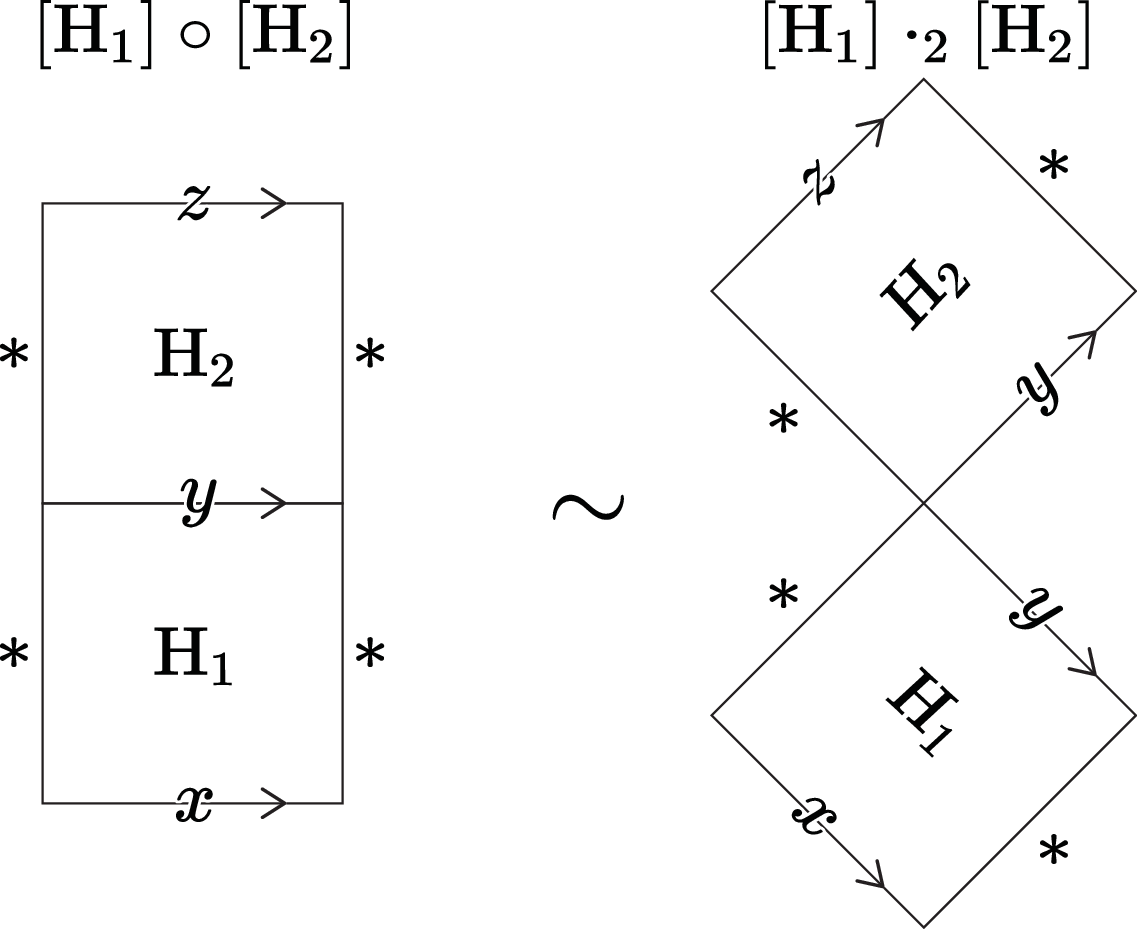}

\centerline{\ Figure 3.6: \ \  Products ``$\circ$" and ``$\cdot_2$" in the relative Homotopy group. \quad }

\vspace{0.15cm}

The map defined on the other patching of $S_1$ and $S_2$ along the left edges $\{0\}\! \times \! [0,1]$ of $S_1$ and $S_2$ in opposite direction, where the identified left edges is considered as the base point, induces the product $[H_1]\cdot_2 [H_2]$ of $[H_1]$ and $[H_2]$ in the relative homotopy group. 

Since the identifications of points $(0,1)\in S_1$ 
and $(0,0)\in S_2$ are common in the both patches, we see that the two maps are homotopic to each other and give the same class in the relative homotopy group. 

Compatibility of two inverse structures \eqref{eq:twoProducts} is shown similarly.
\end{proof}

\noindent
In rest of whole paper, we fix the choices (1) and (2), and the immersion \eqref{eq:immersion}. 
As consequences of these descriptions, we obtain the following.

\medskip
\noindent
\begin{prop}
\label{PropB}
 Depending on the choice of the immersion $\pi$ \eqref{eq:immersion} (recall  Footnote 13), group $[x,x]$ w.r.t.\ the product ``\ $\circ$" for any $x\in \mathcal{F}$ is canonically isomorphic to the second homotopy group $[e,e]=\pi_2(W,*)$ w.r.t.\ the product ``\ $\cdot_2$".  In particular, it is an abelian group. 
Then the isomorphism  induces a left action of $\pi_2(W,*)$ on $[x,y]$ for any $y\in \mathcal{F}$. If $[x,y]\not=\emptyset$, the  action of $\pi_2(W,*)$ on $[x,y]$ is simple and transitive.  $\sim\!\!>$
\end{prop}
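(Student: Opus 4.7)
The plan is to assemble three ingredients already prepared in \S3.1.2. First, the homotopy long exact sequence displayed just before \eqref{eq:immersion} gives, thanks to $\pi_2(W^{(1)},*)=1$, an injection $\pi_2(W,*)\hookrightarrow\pi_2(W,W^{(1)},*)$ whose image is exactly $\ker(\partial_1)=\partial_1^{-1}(e)$, where $e$ denotes the unit of $\mathcal{F}$.

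Second, I would specialise the immersion \eqref{eq:immersion} to $y=x$: it yields a bijection $\pi\colon[x,x]\overset{\sim}{\longrightarrow}\partial_1^{-1}(xx^{-1})=\partial_1^{-1}(e)$, which, composed with the injection above, gives a bijection $[x,x]\overset{\sim}{\longrightarrow}\pi_2(W,*)$. The commutative diagrams \eqref{eq:twoProducts} then show that this bijection carries the product ``$\circ$'' to the product ``$\cdot_2$'' of the relative homotopy group (which restricts to the usual product on $\pi_2(W,*)$) and $\circ$-inverse to inverse; moreover $[1_x]$ maps to the unit, as computed just before \eqref{eq:twoProducts}. Hence the bijection is a group isomorphism, and commutativity of $[x,x]$ is inherited from the classical commutativity of $\pi_2$.

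Third, for the action part, I would transport the left $\circ$-action of $[x,x]$ on $[x,y]$, already established in Corollary \ref{SimpleTransitive}\,1), through the isomorphism $\pi_2(W,*)\simeq[x,x]$ just built. This produces a left action of $\pi_2(W,*)$ on $[x,y]$ for every $y\in\mathcal{F}$; when $[x,y]\ne\emptyset$, simplicity and transitivity are Corollary \ref{SimpleTransitive}\,1) re-expressed under the isomorphism.

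The only point requiring care, rather than a serious obstacle, is the dependence on the data fixing the immersion $\pi$ (Footnote 13): a different orientation of $[0,1]^2$ or a different choice of base edge would compose the isomorphism with inversion (or with a conjugation). Once the conventions declared after \eqref{eq:Relativisation} are kept fixed, the isomorphism $[x,x]\simeq\pi_2(W,*)$ is well-defined, and every remaining verification reduces to unwinding the definitions of $\circ$ and $\cdot_2$ on representative maps $H\colon[0,1]^2\to W$.
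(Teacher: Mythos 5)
Your proposal is correct and follows essentially the same route as the paper: the isomorphism $[x,x]\simeq\partial_1^{-1}(e)=\ker\partial_1\simeq\pi_2(W,*)$ obtained from the immersion \eqref{eq:immersion} and the exact sequence, with the product compatibility supplied by \eqref{eq:twoProducts}. For the second half the paper phrases simple transitivity via $\partial_1^{-1}(xy^{-1})$ being a single coset of the normal subgroup $\pi_2(W,*)$ in $\pi_2(W,W^{(1)},*)$, while you transport Corollary \ref{SimpleTransitive}\,1) through the isomorphism; these are the same argument in two guises.
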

\begin{proof} The first half follows from $[x,x]\simeq \partial_1^{-1}(xx^{-1})=\partial_1^{-1}(e) \simeq [e,e]$ and from the commutativeness in \eqref{eq:twoProducts}.
The latter half follows from the fact that  the second homotopy group $\pi_2(W,*)$ is embedded in the second relative homotopy group
as a normal subgroup so that $\partial_1^{-1}(xy^{-1})$ form its single coset of the normal subgroup.
\end{proof}
  
 \begin{rem}
 {\rm Similar to Proposition \ref{PropB}, we get the right action of  $\pi_2(W,*)$ on $[x,y]$ by changing the orientation of $[0,1]^2$ in \eqref{eq:Relativisation} so that the map $\pi$  \eqref{eq:immersion} get the different target set $\partial_1^{-1}(yx^{-1})$. Details are left to the reader. 
 }
 \end{rem}
 

\subsection{\normalsize A geometric description of $A^+(W,L,\mathcal{R}_m)$}

{\bf 3.2.1\quad  Geometric fundamental relations $\mathcal{R}_g$ and  $\mathcal{R}_g$-sets}. 

\smallskip
\noindent
We reintroduce  the monoid $A^+(W,L,\mathcal{R}_m)$ as the quotient of the monoid
\begin{equation}
\label{eq:PositiveMonoid}
\mathcal{F}^+\ :=\ \text{free sub-monoid of $\mathcal{F}$ generated by } L=\{a_\lambda\}_{\lambda\in \Lambda}
\end{equation}
(which is naturally isomorphic to $L^*\!:=\! \cup_{k\in\mathbb{Z}_{\ge0}}^\infty L^k$) divided by the relation $\sim_{\mathcal{R}_g}$ which is introduced in the present paragraph 
 depending on certain initial input data $\mathcal{R}_g$, called the geometric fundamental relation.

\medskip
\noindent

We consider a ``lifting" $\mathcal{R}_g$, called  a {\it system of geometric fundamental relations}, of the set of algebraic fundamental relations  $\mathcal{R} =\{R_\mu\}_{\mu\in M}$.

\medskip
\begin{defn}
\label{GeoFundRel}
A system of  geometric fundamental relations $\mathcal{R}_g=\{\mathcal{R}_g(p_\mu,q_\mu),\mathcal{R}_g(q_\mu,p_\mu)\}_{\mu\in M}$ is a system 
of {\it non-empty} subsets 
\begin{equation}
\label{eq:GeometricFundamentalRelation}
\mathcal{R}_g(p_\mu,q_\mu)\subset [p_\mu,q_\mu] \quad\text{and} \quad \mathcal{R}_g(q_\mu,p_\mu)\subset [q_\mu,p_\mu]
\vspace{-0.15cm}
\end{equation} 
associated with each algebraic fundamental relation $\{p_\mu\!=\!q_\mu\} \in \mathcal{R}$  
such that they are inverse to each other 
\begin{equation}
\label{eq:Reflexive}
\mathcal{R}_g(p_\mu,q_\mu)=\mathcal{R}_g(q_\mu,p_\mu)^{-1}.
\end{equation}
in the sense \eqref{eq:circInverse0}  (recall \eqref{eq:Pmonoid} (1) ).\footnote
{If  $\{p_\mu\!=\! q_\mu\} \in \mathcal{R}$,   $p_\mu$ and $q_\mu$ are homotopic in $W$ and $[q_\mu,p_\mu]\!=\![p_\mu,q_\mu]^{-1}$ is non-empty. 
}
\end{defn}

\medskip
\noindent
{\it Note.} The only constraint for the choice the sets \eqref{eq:GeometricFundamentalRelation} is that they are non-empty and inverse to each other. The choice does not influence on the results in Proposition \ref{GeometricMonoid}. Nevertheless, in applications, they are often singleton, and good choice of them is essential to be studied carefully (c.f.\ Note 3 and {\bf Question} at the end \S3.2.2 and warning at the end of \S4.2.3).\!\!

\medskip
\noindent
{\bf Construction of $\mathcal{R}_g$-set.} 

Depending on  the choice of a system of geometric fundamental relations 
$\mathcal{R}_g$, we introduce a subset 
\begin{equation}
\label{eq:R^gSet}
[x,y]_{\mathcal{R}_g} \quad \subset \quad [x,y] 
\end{equation}
for all pairs of $x,y\in \mathcal{F}^+$, called {\it $\mathcal{R}_g$-equivalence set} (or {\it $\mathcal{R}_g$-set}) generated by $\mathcal{R}_g$, as the union of  the following 
%
 a) and b).

\smallskip
\noindent
 a) For all $x\in \mathcal{F}^+$, we consider the identity element  $1_x \in [x,x]$.

\noindent
b)  For $x,y \in \mathcal{F}^+$, suppose there exists a finite sequence $z_0,z_1,\cdots,z_n$ of elements of $\mathcal{F}^+$ such that (i) $z_0=x$ and $z_n=y$, and (ii) for $i=1,\cdots,n$, there are words $a_i,b_i,c_i,d_i\in \mathcal{F}^+$  such that $z_{i-1}\! =\! a_i b_id_i$, $z_i\! =\! a_ic_id_i$ and $\{b_i\! \sim \! c_i\}\! \in\! \mathcal{R}_m$. 
Then, we consider the subset of $[x,y]$:
\begin{equation}
\label{eq:EquivalenceGenerated}
\{id_{a_1}\cdot \mathcal{R}_g(b_1,c_1) \cdot id_{d_1}\}\circ \{id_{a_2} \cdot \mathcal{R}_g(b_2,c_2) \cdot id_{d_2}\}\circ \cdots \circ \{id_{a_n} \cdot \mathcal{R}_g(b_n,c_n) \cdot id_{d_n}\}.
\end{equation}
Here, recall that ``$\circ$" and ``$\cdot$"  are the composition of homotopy equivalence maps \eqref{eq:circ} and  \eqref{eq:cdot} of the product in $\mathcal{F}^+$, respectively.

\begin{rem}
{\rm The above construction of the system of $\mathcal{R}_g$-sets 
$\{[x,y]_{\mathcal{R}_g}\}_{x,y\in \mathcal{F}^+}$ generated by the geometric relations $\mathcal{R}_g$ is parallel to the construction of algebraic equivalence relation ``$\sim$" generated by the algebraic relations $\mathcal{R}_m$ given in \eqref{eq:Pmonoid} of the definition of the monoid $A^+(W)$ (\S2.1 Definition \ref{GroupMonoid}, \eqref{eq:Pmonoid} and \eqref{eq:MonoidGraph}). Nevertheless, we stress two basic differences

1) We need to choose the initial data $\mathcal{R}_g=\{\mathcal{R}_g(p_\mu,q_\mu),\mathcal{R}_g(q_\mu,p_\mu)\}_{\mu\in M}$.  The results of later studies in \S4  should heavily depend on the choice. 

2)  The $\mathcal{R}_g$-set $[x,y]_{\mathcal{R}_g}$  is a particular subset of $[x,y]$ 
 obtained by replacing each elementary transformation $b\sim c$ (=an edge in the graph $G(L^*,\mathcal{R}_m)$ \eqref{eq:MonoidGraph3}, c.f.\ also \eqref{eq:LocalInertia})  by the set of homotopy maps $\mathcal{R}(b,c)$.
 Actually, such data are used for the construction of $\pi_2$-classes in next \S4.}
\end{rem}

\medskip
The following proposition is almost tautological by comparing  the definition of $\mathcal{R}_g$-sets and the definition \eqref{eq:Pmonoid} of $A^+(W,L,\mathcal{R}_m)$.
 
\medskip
\begin{prop}
\label{GeometricMonoid}
 1. Let us define a relation on $\mathcal{F}^+$: for $x,y\in \mathcal{F}^+$, set   
\begin{equation}
\label{eq:RgEquivalence}
\quad x \sim_{\mathcal{R}_g} y  \quad \Leftrightarrow_{def}   \quad [x,y]_{\mathcal{R}_g}\not=\emptyset  
\!\!\!\!\!\!
\end{equation}
Then the  relation $\sim_{\mathcal{R}_g}$ on $\mathcal{F}^+$ 
 is an equivalence relation.
 
 \medskip
 2. The relation  $\sim_{\mathcal{R}_g}$ is compatible with product structure ``\ $\cdot$" on $\mathcal{F}^+$ so that it induces a monoid structure on the quotient set  $\mathcal{F}^+/\! \sim_{\mathcal{R}_g}$.  
 
 \medskip
3. The quotient monoid is naturally isomorphic to $A^+(W,L,\mathcal{R}_m)$:
\begin{equation}
\label{eq:GeometricArtinMonoid}
 \mathcal{F}^+/\! \sim_{\mathcal{R}_g} \quad \simeq \quad A^+(W,L,\mathcal{R}_m). 
\end{equation}
 \end{prop}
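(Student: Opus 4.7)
The three parts are tightly linked: once one unpacks the construction of $[x,y]_{\mathcal{R}_g}$ via concatenations of elementary pieces, each assertion becomes a bookkeeping exercise against the algebraic definition of $A^+(W,L,\mathcal{R}_m)$ in \S2.1 and the algebraic identities for $\circ$ and $\cdot$ recorded in Proposition \ref{PropA}.

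For part 1, I would verify the three axioms separately. Reflexivity of $\sim_{\mathcal{R}_g}$ is immediate from clause (a): $1_x \in [x,x]_{\mathcal{R}_g}$. For symmetry, suppose $x \sim_{\mathcal{R}_g} y$ is witnessed by a sequence $z_0=x,\dots,z_n=y$ with the element
\[
\Phi = \{1_{a_1}\cdot \mathcal{R}_g(b_1,c_1)\cdot 1_{d_1}\}\circ\cdots\circ\{1_{a_n}\cdot \mathcal{R}_g(b_n,c_n)\cdot 1_{d_n}\}
\]
of $[x,y]_{\mathcal{R}_g}$. Applying the $\circ$-inverse and using Proposition \ref{PropA}\,c), together with the identities $1_{a_i}^{-1}=1_{a_i}$ and the defining property \eqref{eq:Reflexive} $\mathcal{R}_g(b_i,c_i)^{-1}=\mathcal{R}_g(c_i,b_i)$, one rewrites $\Phi^{-1}$ as a composition of exactly the same shape for the reversed sequence $z_n,\dots,z_0$; hence it lies in $[y,x]_{\mathcal{R}_g}$. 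Transitivity follows by concatenating the two witnessing sequences and taking the $\circ$-product of the corresponding compositions, using associativity of $\circ$ from Proposition \ref{PropA}\,a).

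For part 2, given sequences witnessing $x\sim_{\mathcal{R}_g} x'$ and $y\sim_{\mathcal{R}_g} y'$, I would form a combined sequence from $xy$ to $x'y'$ by first performing the $x$-moves on the left factor (i.e.\ replace each $a_i,b_i,c_i,d_i$ by $a_i,b_i,c_i,d_iy$) and then the $y$-moves on the right factor (replacing $a_j,b_j,c_j,d_j$ by $x'a_j,b_j,c_j,d_j$). The associated element of $[xy,x'y']_{\mathcal{R}_g}$ is obtained by $\cdot$-multiplying the witness for $x\sim_{\mathcal{R}_g}x'$ by $1_y$ on the right and then $\circ$-composing with $1_{x'}$ $\cdot$-multiplied on the left by the witness for $y\sim_{\mathcal{R}_g}y'$; the interchange law \eqref{eq:CdotCirc} ensures well-definedness and independence of bracketing, hence $\mathcal{F}^+/\!\sim_{\mathcal{R}_g}$ inherits the concatenation product.

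For part 3, the natural candidate is the map induced by the identity on $\mathcal{F}^+=L^*$. By construction, the sequences $z_0,\dots,z_n$ appearing in clause (b) defining $[x,y]_{\mathcal{R}_g}$ are exactly the sequences of elementary transformations defining the algebraic equivalence $\sim$ in \eqref{eq:Pmonoid}. Therefore $x\sim_{\mathcal{R}_g} y \Rightarrow x\sim y$ in $A^+(W,L,\mathcal{R}_m)$ is immediate, while the converse uses the sole hypothesis that each $\mathcal{R}_g(p_\mu,q_\mu)$ is \emph{non-empty} (Definition \ref{GeoFundRel}): from any chain of elementary transformations one can pick a representative in each $\mathcal{R}_g(b_i,c_i)$ and form the composition \eqref{eq:EquivalenceGenerated}, producing an element of $[x,y]_{\mathcal{R}_g}$. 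This gives a bijection between equivalence classes, and compatibility with multiplication is clear, so the bijection is a monoid isomorphism \eqref{eq:GeometricArtinMonoid}.

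The only step requiring genuine care (rather than direct unpacking of definitions) is symmetry in part 1, because one must check that inverting a $\circ$-composition of $\cdot$-products of $\mathcal{R}_g(b_i,c_i)$'s factors through the defining condition \eqref{eq:Reflexive}; the rest is routine. No deep obstacle is expected—the proposition is essentially a verification that the geometric definition faithfully mirrors the algebraic one, with the non-emptiness of each $\mathcal{R}_g(p_\mu,q_\mu)$ providing the one nontrivial input.
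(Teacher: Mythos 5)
Your proposal is correct and follows essentially the same route as the paper: reflexivity from the identity element $1_x$, symmetry by reversing the witnessing sequence and invoking \eqref{eq:Reflexive} via the $\circ$-inverse, transitivity and compatibility with ``$\cdot$'' by concatenating witnessing sequences (the paper's combined sequence $xz=u_0z,\dots,u_nz=yv_0,\dots,yv_m=yw$ is exactly your $(\Phi\cdot 1_y)\circ(1_{x'}\cdot\Psi)$ decomposition), and part 3 as the tautological comparison with the algebraic definition, with non-emptiness of the sets $\mathcal{R}_g(p_\mu,q_\mu)$ supplying the converse.
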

\begin{proof} 1. 
We have to show the following a), b) and c).

 a) For all $x\in \mathcal{F}^+$, the $\mathcal{R}_g$-set $[x,x]_{\mathcal{R}_g}$ contains  $1_x$.

\noindent
({\it Proof.}  This follows from {\bf Construction of $\mathcal{R}_g$-set} a). $\Box$)

\smallskip
b) For all $x,y\in \mathcal{F}^+$, the $\mathcal{R}_g$-sets are closed under the ``$\circ$"-inversion: 
$$
[x,y]_{\mathcal{R}_g} \ \overset{\sim}{\longrightarrow} \ [y,x]_{\mathcal{R}_g}, \quad  [H] \ \mapsto \ [H]^{-1}. 
$$
({\it Proof.}   Let $[H]\in [x,y]$ be presented by  a sequence $z_0,\cdots,z_n$ connecting $x$ to $y$ in the sense of Definition b).  Consider the element $[H']\in [y,x]$ which is presented by the reversed sequence $z_n,\cdots,z_0$  connecting $y$ to $x$ in the sense of Definition b) (use the reflexivity \eqref{eq:Reflexive}). Let us show $[H]'=[H]^{-1}$. Because of the associativity of the product ``\ $\circ$", we have only to check this only for the sequence of length 1, i.e for an elementary transformation. Then the proof is reduced to the condition \eqref{eq:Reflexive} on the geometric fundamental relations.  $\Box$)

   c)  For $x,y,z\in \mathcal{F}^+$, the $\mathcal{R}_g$-sets are closed under composition ``$\ \circ$":
\begin{equation}
\vspace{-0.1cm}
\label{eq:circ2}
\circ\ :\ [x,y]_{\mathcal{R}_g} \times [y,z]_{\mathcal{R}_g} \ \rightarrow\  [x,z]_{\mathcal{R}_g}.
\vspace{-0.cm}
\end{equation}
\noindent
({\it Proof.}   If $z_0,\cdots,z_n$ and $w_0,\cdots,w_m$ are the sequences connecting $x$ to $y$ and $y$ to $z$, respectively, in the sense of b) of {\bf Construction of $\mathcal{R}_g$-set}. Then, $z_0,\cdots,z_n=$ $y=w_0,\cdots,w_m$ is a sequence connecting $x$ and $z$ in the same sense.  
$\Box$)

2.  For $[H]\in[x,y]_{\mathcal{R}_g}$ and $[H']\in[z,w]_{\mathcal{R}_g}$, the concatenation, say $H\cdot H'$, of them in the first variable (recall 6.1 Proposition \ref{PropA}, d)) 
gives arise an element $[H\cdot H']$ belonging to $[x\cdot z,y\cdot w]_{\mathcal{R}_g}$.
\begin{equation}
\vspace{-0.1cm}
\label{eq:cdot2}
\cdot \ :\ [x,y]_{\mathcal{R}_g} \times [z,w]_{\mathcal{R}_g} \ \rightarrow\  [xz,yw]_{\mathcal{R}_g}.
\end{equation}
({\it Proof.}   If $u_0,\cdots,u_n$ and $v_0,\cdots,v_m$ are the sequences connecting $x$ to $y$ and $z$ to $w$, respectively, in the sense of b) of {\bf Construction of $\mathcal{R}_g$-set}. Then, $xz=u_0z,\cdots,u_nz=yz=yv_0,\cdots,yv_m=yw$ is a sequence connecting $xz$ and $yw$ which gives arise some $\mathcal{R}_g$-set in $[xz,yw]_{\mathcal{R}_g}$.  $\Box$)

3.   For $x,y\in \mathcal{F}^+$ with $x\not=y$, we have only to note the equivalences:
\vspace{-0.2cm}
$$
\vspace{-0.1cm}
\begin{array}{rll}
 x \sim_{\mathcal{R}_g} y &
\Leftrightarrow_{def} & [x,y]_{\mathcal{R}_g}\not=\emptyset \\
&\Leftrightarrow & \text{Either $x=y$ and $1_x\in [x,x]_{\mathcal{R}_g}$, or, there exists a seq-}\\
&& \text{uence $z_0,\cdots,z_n$ connecting $x$ and $y$ satisfying b) of  }\\
&& \text{{\bf Construction of $\mathcal{R}_g$-set}.}\\
&\Leftrightarrow_{def} & x \sim y 
\end{array}
\vspace{-0.6cm}
$$
\end{proof}
\noindent
{\it Note.}  1. Proposition \ref{GeometricMonoid} 3.\ implies, in particular, the relation $ \sim_{\mathcal{R}_g}$ does not depend on the choice of a geometric fundamental relations $\mathcal{R}_g$.

\noindent
{\it Note.} 2.  Using the ``$\circ$"-invertibility of $\mathcal{R}_g$-sets (see 1.\ b) in the proof), we observe that a left (resp.\ right)  ``$\circ$"-multiplication \eqref{eq:circ} of an element $[H_1]\in [x,y]_{\mathcal{R}_g}$ (resp.\ $[H_2]\in [y,z]_{\mathcal{R}_g}$) induces bijections betwen $\mathcal{R}_g$-sets:
\begin{equation}
\vspace{-0.1cm}
\label{eq:R-invert}
[H_1]\circ\ :\  [y,z]_{\mathcal{R}_g} \simeq [x,z]_{\mathcal{R}_g} 
\quad  \text{and} \quad  
\circ [H_2]\ :\  [x,y]_{\mathcal{R}_g} \simeq [x,z]_{\mathcal{R}_g} .
\end{equation}

However, ``$\cdot$"-multiplications \eqref{eq:cdot} either from left or right may not induce bijection (e.g.\ consider the case  $\#[x,y]\ge2$ and $z=w=e$  so that a ``$\cdot$"-left-multiplication of an element $[H]\in [x,y]$ on the set $[e,e]_{\mathcal{R}_g}=\{1_e\}$ does not generate whole $[x,y]$). 

\medskip
\noindent
{\bf 3.2.2 \quad Inertia group}

\smallskip 
\noindent
In the rest of \S3.2, we introduce  {\it inertia groups} associated with the geometric fundamental relations $\mathcal{R}_g$. It is not used for the construction of $\pi_2$, but is used to describe {\it how loose is the construction of $\pi_2$}.

\begin{defn}
\label{LocalInertia}
The subgroup  $[x,x]_{\mathcal{R}_g}$  of $[x,x]\in \!\mathcal{F}^+$ w.r.t.\ the product ``\ $\circ$"  shall be called the {\it local inertia group} at $x$ with respect to  the geometric fundamental relations $\mathcal{R}_g$.  
({\it Note.} The fact that $[x,x]_{\mathcal{R}_g}$ is a group is a particular consequence of a), b) and c) in the proof of Proposition \ref{GeometricMonoid} 1.\ together with the fact that $[x,x]$ is a group,)
\end{defn}

The definition of an inertia group is paraphrased and symbolically expressed by a help of the graph $G(L^*,\mathcal{R}_m)$ \eqref{eq:MonoidGraph3} as follows.
\vspace{-0.3cm}
\begin{equation}
\label{eq:LocalInertia}
Ab(Loop(G(L^*,\mathcal{R}_m),x))\otimes \mathcal{R}_g \quad \to \quad  [x,x]_{\mathcal{R}_g} \quad \to \ 0.
\vspace{-0.cm}
\end{equation}
{\bf Explanation:}  We mean by $Loop(G(L^*,\mathcal{R}_m),x)$ the set of closed loops in the graph $G(L^*,\mathcal{R}_m)$ \eqref{eq:MonoidGraph3}  starting from and ending at the vertex $x\in L^*$, which is equipped with a product structure by concatenation, and by  $Ab(Loop(G(L^*,\mathcal{R}_m),x))$ its abelianization.  For any loop $P$, 
we mean by  $P\otimes \mathcal{R}_g$ the subset of $P\otimes [x,x]$ defined as follows :  let $P$ be the sequence $x=z_0,z_1,\cdots,z_n=x$ in $L^*$ where $z_{i-1}$ and $z_i$ are connected by an elementary transformation for $i=1,\cdots,n$, and we apply the setting b) of {\bf Construction of $\mathcal{R}_g$-set} and consider a subset  \eqref{eq:EquivalenceGenerated}  of $[x,x]$, which is independent of a choice of $P$.
Then, $Ab(Loop(G(L^*,\mathcal{R}_m),x))\otimes \mathcal{R}_g$ is the set of formal $\mathbb{Z}$-linear combinations of elements of $P\otimes  \mathcal{R}_g$, and the arrow in the middle of \eqref{eq:LocalInertia} is given by the projection $P\otimes a \mapsto a$. 

In particular, if $\mathcal{R}_g$ is a system of singleton sets, then $P\otimes \mathcal{R}_g$ consists of a single element depending only on the homotopy class of $P$ in $G(L^*,\mathcal{R}_m)$ (since the homotopy $x\!-\!y\!-\!x \sim x$ in $G(L^*,\mathcal{R}_m)$ induces the identity $[x,y]_{\mathcal{R}_g}\! \circ\! [y,x]_{\mathcal{R}_g}\! =\! \{1_x\}$). So, we have the natural surjective homomorphism:
\begin{equation}
\label{eq:SingletonLocalInertia}
\quad Ab(\pi_1(G(L^*,\mathcal{R}_m),x)) \quad \to \quad  [x,x]_{\mathcal{R}_g} \quad \to \ 0.
\end{equation}

\smallskip
\begin{prop}
\label{InertiaAction}
 {\it 
Suppose the set $[x,y]_{\mathcal{R}_g}$ for $x,y\in L^*$ is non-empty.\!

\noindent
(1) The local inertia group $[x,x]_{\mathcal{R}_g}$ (resp.\ $[y,y]_{\mathcal{R}_g}$) acts simple and transitively on $[x,y]_{\mathcal{R}_g}$ from left (resp.\ right). 

\noindent
(2) The local innertia groups $[x,x]_{\mathcal{R}_g}$ and $[y,y]_{\mathcal{R}_g}$ are isomorphic to each other by the adjoint action of by any element $[H]\in [x,y]_{\mathcal{R}_g}$, where the isomorphism is independent of the choice of $[H]$.\ Then 
 $\pi [x,x]_{\mathcal{R}_g}$ and $\pi [y,y]_{\mathcal{R}_g}$, as subgroups of $\pi_2(W,*)$,
are conjugate by 
$\partial_1([x,y])\!\! =\! xy^{-1} \! \in 
 \mathcal{F}$\!.}
\end{prop}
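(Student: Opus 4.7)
The plan is to reduce both parts of the proposition to closure statements about the $\mathcal{R}_g$-sets that were already verified in the proof of Proposition \ref{GeometricMonoid}, combined with the simple-transitive action of the ambient groups $[x,x]$ and $[y,y]$ on $[x,y]$ from Corollary \ref{SimpleTransitive}, and then to push everything through the immersion $\pi$ of \eqref{eq:immersion} to obtain the conjugation statement in $\pi_2(W,*)$.

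For (1), given $[H_1],[H_2]\in [x,y]_{\mathcal{R}_g}$, Corollary \ref{SimpleTransitive} furnishes a unique $\alpha\in[x,x]$ with $\alpha\circ[H_1]=[H_2]$, explicitly $\alpha=[H_2]\circ[H_1]^{-1}$. I would then invoke step b) in the proof of Proposition \ref{GeometricMonoid} to see $[H_1]^{-1}\in[y,x]_{\mathcal{R}_g}$, and step c) (closure of the $\mathcal{R}_g$-system under $\circ$) to conclude $\alpha\in[x,x]_{\mathcal{R}_g}$. Uniqueness inside $[x,x]_{\mathcal{R}_g}$ is inherited from uniqueness inside $[x,x]$. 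The right action of $[y,y]_{\mathcal{R}_g}$ on $[x,y]_{\mathcal{R}_g}$ is handled symmetrically.

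For (2), I would define, for each $[H]\in[x,y]_{\mathcal{R}_g}$, the adjoint map $\phi_{[H]}\colon[x,x]_{\mathcal{R}_g}\to[y,y]_{\mathcal{R}_g}$ by $\phi_{[H]}(\alpha):=[H]^{-1}\circ\alpha\circ[H]$. Well-definedness follows once more from closure under $\circ$ and $\circ$-inversion established in Proposition \ref{GeometricMonoid}; the homomorphism property follows from associativity of $\circ$ (Proposition \ref{PropA}~a)); and $\phi_{[H]^{-1}}$ is a two-sided inverse, so $\phi_{[H]}$ is an isomorphism. For independence of the choice of $[H]$: by part (1) any other $[H']\in[x,y]_{\mathcal{R}_g}$ has the form $[H']=[H]\circ\gamma$ for a unique $\gamma\in[y,y]_{\mathcal{R}_g}$, whence $\phi_{[H']}(\alpha)=\gamma^{-1}\circ\phi_{[H]}(\alpha)\circ\gamma$; but $[y,y]\simeq\pi_2(W,*)$ is abelian by Proposition \ref{PropB}, so conjugation by $\gamma\in[y,y]_{\mathcal{R}_g}\subset[y,y]$ acts trivially on $[y,y]_{\mathcal{R}_g}$, giving $\phi_{[H']}=\phi_{[H]}$.

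For the final conjugation statement, I would transport $\phi_{[H]}$ through the immersion $\pi$: by the commutative square \eqref{eq:twoProducts}, the product $\circ$ matches the product $\cdot_2$ of the relative homotopy group, so $\pi\circ\phi_{[H]}=\mathrm{Ad}_{\pi([H])}\circ\pi$ as maps into $\pi_2(W,W^{(1)},*)$. Since $\pi_2(W,*)=\ker\partial_1$ is a normal subgroup of the relative homotopy group, this conjugation preserves $\pi_2(W,*)$ and so carries $\pi[x,x]_{\mathcal{R}_g}$ onto $\pi[y,y]_{\mathcal{R}_g}$; moreover two elements of $\partial_1^{-1}(xy^{-1})$ differ by an element of the abelian group $\pi_2(W,*)$, hence induce the same inner automorphism of $\pi_2(W,*)$, which identifies $\mathrm{Ad}_{\pi([H])}|_{\pi_2(W,*)}$ with the standard $\pi_1$-action of $\partial_1(\pi([H]))=xy^{-1}$ on $\pi_2(W,*)$. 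The main obstacle I anticipate is precisely this last identification: checking carefully that the restriction to the normal subgroup $\pi_2(W,*)$ of the inner action of relative $\pi_2$ factors through $\partial_1$ and coincides, under the projection $\mathcal{F}\twoheadrightarrow\pi_1(W,*)=A(W)$, with the canonical $\pi_1$-action on $\pi_2$.
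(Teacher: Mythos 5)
Your proposal is correct and follows essentially the same route as the paper: simplicity is inherited from the simple action of $[x,x]$ on $[x,y]$, transitivity comes from exhibiting $[H_2]\circ[H_1]^{-1}$ as an element of $[x,x]_{\mathcal{R}_g}$ (the paper writes this out as an explicit concatenation of geometric fundamental relations, which is exactly the content of the closure statements you cite from the proof of Proposition \ref{GeometricMonoid}), and (2) uses the adjoint map with independence of $[H]$ deduced from the abelianness of $[y,y]\simeq\pi_2(W,*)$. Your careful unwinding of the final conjugation statement via \eqref{eq:twoProducts} and the normality of $\pi_2(W,*)$ in the relative group only elaborates what the paper asserts in one line, so no substantive difference remains.
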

\begin{proof}
(1)  The simplicity of the action of $[x,x]_{\mathcal{R}_g}$ is a consequence of the simplicity of the action of  $[x,x]$ on $[x,y]$.

 Transitivity: for any two elements $[H_1], [H_2]\in [x,y]_{\mathcal{R}_g}$, let us show the existence of a unique element $g\in [x,x]_{\mathcal{R}_g}$ such that $[H_1]=g[H_2]$. By the definition of $[x,y]_{\mathcal{R}_g}$, there is sequence $z_0,\cdots,z_n$ connecting $x$ and $y$ such that $H_1$ is an element of 
$\{id_{a_1}\cdot \mathcal{R}_g(b_1,c_1) \cdot id_{d_1}\}\circ \{id_{a_2} \cdot \mathcal{R}_g(b_2,c_2) \cdot id_{d_2}\}\circ \cdots \circ \{id_{a_n} \cdot \mathcal{R}_g(b_n,c_n) \cdot id_{d_n}\}$.
Similarly, there exists a sequence $w_0,\cdots,w_m$ connecting $x$ and $y$ 
and $e_i,f_i,g_i,h_i\in \mathcal{F}^+$ ($i=1,\cdots,m$) such that $H_2$ is an element of 
$\{id_{e_1}\cdot \mathcal{R}_g(f_1,g_1) \cdot id_{h_1}\}\circ \{id_{e_2} \cdot \mathcal{R}_g(f_2,g_2) \cdot id_{h_2}\}\circ \cdots \circ \{id_{e_m} \cdot \mathcal{R}_g(f_m,g_m) \cdot id_{h_m}\}$, respectively.
Then, we define $g:=[H_1] [H_2]^{-1}:=
\{id_{a_1}\cdot \mathcal{R}_g(b_1,c_1) \cdot id_{d_1}\}\circ \{id_{a_2} \cdot \mathcal{R}_g(b_2,c_2) \cdot id_{d_2}\}\circ \cdots \circ \{id_{a_n} \cdot \mathcal{R}_g(b_n,c_n) \cdot id_{d_n}\}
\circ \{id_{e_m} \cdot \mathcal{R}_g(g_m,f_m) \cdot id_{h_m}\}
\circ \cdots
\circ \{id_{e_2} \cdot \mathcal{R}_g(g_2,f_2) \cdot id_{h_2}\}
\circ \{id_{e_1}\cdot \mathcal{R}_g(g_1,f_1) \cdot id_{h_1}\}
\in [x,x]_{\mathcal{R}_g}$.

\smallskip
\noindent
(2)  If $[x,y]_{\mathcal{R}_g}\not=\emptyset$, then for any $[H]\in [x,y]_{\mathcal{R}_g}$, we  consider the conjugation action $Ad(H)=[H]^{-1}(-)[H] : [x,x]_{\mathcal{R}_g}\to [y,y]_{\mathcal{R}_g}$ which induces the isomorphism of two groups. The isomorphism is independent of the choice of $[H]$, since difference of two isomorphisms $Ad(H_1)$ and $Ad(H_2)$ is given by an inner automorphism by $[H_1]^{-1}[H_2] \in [x,x]_{\mathcal{R}_g}$ which is an abelian group. Thus, the isomorphism $\pi [x,x]_{\mathcal{R}_g}\simeq \pi [y,y]_{\mathcal{R}_g}$ is given by the adjoint action of $\partial_1([x,y])=xy^{-1}$.  
\end{proof}



\noindent
{\bf Question}  Find a criterion or a condition for the positive presentation $(W,L,\mathcal{R})$ and geometric fundamental relations $\mathcal{R}_g$ such that the inertia groups are trivial. 
Obvious, it is trivially possible when  $\pi_2(W,*)=0$, and we ask for a condition regardless  $\pi_2(W,*)=0$ is trivial or not.

\section{\!\! $\pi_2$-classes associated with non-cancellativity}

\noindent 
The present section is the goal for the construction of a set of $\pi_2$-classes of $W$, called a $\Pi$-class,  associated with twins of non-cancellative tuples of the monoid $A^+(W)$.
 Here, the concepts of non-cancellative tuples and their kernels are given in \S4.1.1 Def.\ \ref{NCtuple}, and a pair of NC-tuples  defined over the same kernel  is called a twin (Def.\ \ref{Twin}, see Footnote 18).  Examples of them are given in \S4.1.2  (a) and (b).
 
 \smallskip
 Our strategy of the construction of the $\Pi$-class is the following.
 

 (1) For a given non-cancellative tuple $(a,b,c,d) \in A^+(W)^4$, consider its lifting $(\tilde{a},\tilde{b},\tilde{c},\tilde{d})\in (\mathcal{F}^+)^4$, so that we have the $\mathcal{R}_g$-set of geometric non-cancellative relations $[\tilde{a}\tilde{b}\tilde{c},\tilde{a}\tilde{c},\tilde{d}]_{\mathcal{R}_g}$. 
 Using a division theory developed in \S4.2.1, we obtain the set of division relations
 $\tilde{a}\backslash [\tilde{a}\tilde{b}\tilde{d},\tilde{a}\tilde{c}\tilde{d}]_{\mathcal{R}_g}/\tilde{d}$ in $[\tilde{b},\tilde{c}]$, 
 on which the local inertia groups \eqref{eq:DvisionInertia} act simple and transitively.

(2)   If, further, there is a twin partner $(a',b,c,d')$ of the non-cancellative tuples, we consider also the set of division relations  $\tilde{a}'\backslash [\tilde{a}'\tilde{b}\tilde{d}',\tilde{a}'\tilde{c}\tilde{d}']_{\mathcal{R}_g}/\tilde{d}'$ in $[\tilde{b},\tilde{c}]$.  Since the both division relations are in the same $[\tilde{b},\tilde{c}]$, their differences lie in the group $[\tilde{b},\tilde{b}]$, where the   $[\tilde{b},\tilde{b}]$ is naturally isomorphic to the second homotopy group  $\pi_2(W,*)$ (Proposition \ref{PropB}) by the immersion $\pi$ \eqref{eq:immersion}. 
This gives the $\Pi$-class \eqref{eq:FinalPi2} in \S4.2.3  as a subset of  $\pi_2(W,*)$, on which global inertia group \eqref{eq:FinalInertia} acts transitively. 
We show that the $\Pi$-class and the global inertia group do not depend on the choice of the liftings of the non-cancellative tuples.

\subsection{\normalsize Non-cancellative tuples }

{\bf 4.1.1 \quad  Definition of non-cancellative tuples}

\noindent
We formulate a concept of non-cancellative tuples for an abstract monoid $A^+$, where we use notation ``$\sim$" to expres the\! equality of elements in $A^+$ for the same reason as in Footnote 5.

\begin{defn} 
\label{NCtuple}
1. Consider a pair $(b,c)\!\in\!$ {\small \! $A^+ \! \times \!A^+$} which are not equivalent $b\not\sim c$.  We call  $(a,b,c,d) \in (A^+)^4$  a {\it non-cancellative tuple (or NC-tuple for short) over the kernel  $(b,c)$} if it satisfies, what we shall call, a {\it non-cancellative equivalence relation}:
\vspace{-0.1cm}
\begin{equation}
\label{eq:NCRelation}
a\ b\ d \ \sim \ a\ c\ d 
\vspace{-0.1cm}
\end{equation}
in $A^+$. We set
\vspace{-0.1cm}
\begin{equation}
\label{eq:ncp}
NC_{A^+}(b,c):=\{(a,b,c,d) \mid \text{\ solution of the equation\ } \eqref{eq:NCRelation} \}
\end{equation}
which we shall denote also by $NC(b,c)$ if there may be no confusion.

\smallskip
\noindent
2.  A tuple $(a,b,c,d)\in NC_{A^+}(b,c)$ is called {\it reduced} if there are $a',d'\in A^+$ such that $a'\mid^r a$, $d'\mid^l d$ and $(a',b,c,d')$ is a non-cancellative tuple on $(b,c)$, then $a\sim a'$ and $d\sim d'$.

\smallskip
\noindent
3.  A tuple $(a,b,c,d)\in NC_{A^+}(b,c)$ is called {\it minimal} if there are $a',d'\in A^+$ such that $a'\mid^l b$, $a'\mid^l c$, $d'\mid^r b$ and $d'\mid^r c$, then $a'=d'=1$.

\smallskip
\noindent
4. A tuple $(a,b,c,d)\in NC_{A^+}(b,c)$ is called {\it indecomposable} if there does not exist 
 $(a',b',c',d')\in NC_{A^+}(b',c')$ and $(a'',b'',c'',d'')\in NC_{A^+}(b",c")$ such that $[d',a'']\sim [b',a'']\sim [c',a'']\sim [d',b'']\sim [d',c'']\sim1$ and $a\sim a'a'', b\sim b'b'',  c\sim c'c'', d\sim d'd''$. \footnote
 {For two elements $p,q\in A^+$, we denote by $[p,q]\sim1$ the equivalence relation $pq\sim qp$.
 }
\end{defn}
\begin{rem} 
\label{Examples}
{\rm 1.   The conditions 2$\sim$4.\ seem to distinguish ``atoms" among all non-cancellative tuples, but are not used  except in the study of examples in \S4.3. We should better wait for future study for their meanings.}
\end{rem}

\noindent
2.  Among the monoids $A^+(W,L,\mathcal{R})$ in  \S2.2, followings are known:

\noindent 
 \qquad \quad \ \ Cancellative: 1., 2.\ i) \cite{B-S, D,Gar}, ii) \cite{Du}, and 3.\ $B_{ii}$ \cite{S-I}.

\noindent
 \qquad Non-cancellative:  2.\ v)  for simply-laced case \cite{S4}, and 4.


\noindent

\bigskip
\noindent
{\bf 4.1.2\quad Examples of non-cancellative tuples} 

\noindent
We analyze some basic examples of Yoshinaga's  positive homogeneous presentation \eqref{eq:linesPositiveHomogeneous} for the complement of line arrangements.

\smallskip
\noindent
(a) Three generic lines $\{H_1, H_2, H_3\}$  in $\mathbb{R}^2$.

The monoid has the following presentation (recall \S2.2 Example 4  (a)):
\begin{equation}
\label{eq:3linesMonoid}
A^+(M(\mathcal{A})):= \langle \tilde{\gamma}_1,\tilde{\gamma}_2,\tilde{\gamma}_3\mid \tilde{\gamma}_1\tilde{\gamma}_2\tilde{\gamma}_3 \!\overset{A}{\sim} \!
\tilde{\gamma}_1\tilde{\gamma}_3\tilde{\gamma}_2, 
\tilde{\gamma}_1\tilde{\gamma}_2\tilde{\gamma}_3 \! \overset{B}{\sim} \!
\tilde{\gamma}_3 \tilde{\gamma}_1\tilde{\gamma}_2,
\tilde{\gamma}_1\tilde{\gamma}_2\tilde{\gamma}_3 \! \overset{C}{\sim} \!
\tilde{\gamma}_2\tilde{\gamma}_3\tilde{\gamma}_1 \rangle^+\!\! 
\end{equation}

\begin{rem}
{\rm Remark that we put labels $A,B$ and $C$ on each fundamental relation ``$\sim$" and we regard that the labeled equivalence relation $\overset{A}{\sim}$, $\overset{B}{\sim}$ and $\overset{C}{\sim}$ carry orientations and $\overset{A^{-1}}{\!\sim}$, $\overset{B^{-1}}{\!\sim}$ and $\overset{C^{-1}}{\!\sim}$ carry  opposite orientations (see the proof of the following\! {\it Claim} for their use).\ These additional data will be used\! in\! the\! construction\! of\! second\! homotopy\! classes\! in\! \S4.3.1\! Example\! (a).\!\!\!
}
\end{rem}

\medskip
\noindent
{\it Claim. The monoid has the following twins of non-cancellative tuples.\footnote
{There are  3 remarks on the notation for this (a) and next (b) cases.  

(1) These are not  the complete lists but are samples of interesting reduced NC-tuples.

(2) We use $\gamma_1,{\gamma}_2,{\gamma}_3, etc.$ for the images in $A^+(W)$ of the free generators $\tilde{\gamma}_1,\tilde{\gamma}_2,\tilde{\gamma}_3, etc.$ Strictly speaking, this is not correct, since  $\gamma_1,{\gamma}_2,{\gamma}_3$ were already used to describe the elements in the fundamental group (see \S2.2 Example 4). However, we allow the abuse, since we shall not use the notation for an element of the fundamental group in the sequel of the paper. 

(3) Instead of writing $(a,b,c,d)\in NC(b,c)$, we write $(a,d)\in NC(b,c)$ for simplicity.}
\begin{equation}
\label{eq:3linesTwins}
\begin{array}{rcl}
(\gamma_1,1), (1,\gamma_1\gamma_2\gamma_3)    & \in &NC(\gamma_2\gamma_3, \gamma_3\gamma_2) \\
(\gamma_1\gamma_2\gamma_3,1), (1,\gamma_2)   & \in &NC(\gamma_3\gamma_1, \gamma_1\gamma_3) \\
(\gamma_1\gamma_3,1), (1,\gamma_3\gamma_2)  & \in &NC(\gamma_1\gamma_2, \gamma_2\gamma_1) \\
\end{array}
\end{equation}
All NC-tuples given here are reduced, minimal and indecomposable.}

\begin{proof} For each case, we give explicitly the sequence of elementary transformations giving the equivalence \footnote
{We may regard each elementary transformation as an action of an oriented relations $\overset{A}{\sim}$, $\overset{B}{\sim}$, $\overset{C}{\sim}$, $\overset{A^{-1}}{\!\sim}$, $\overset{B^{-1}}{\!\!\sim}$ or $\overset{C^{-1}}{\!\sim}$ on the word in its left hand side, where the acted part consisting of three letters in the word is indicated by separating it by spaces.
}
 (recall \eqref{eq:Pmonoid} of Definition of $A^+(W)$).

\smallskip
\noindent
Case $(\gamma_1,1), (1,\gamma_1\gamma_2\gamma_3)   \in NC(\gamma_2\gamma_3, \gamma_3\gamma_2)$:

\vspace{-0.4cm}
\begin{equation}
\label{eq:3linesTwin23}
\begin{array}{rl}
\tilde{\gamma}_1(\tilde{\gamma}_2\tilde{\gamma}_3)\!\!&\!\! 
=\tilde{\gamma}_1 \tilde{\gamma}_2 \tilde{\gamma}_3
 \overset{A}{\sim} \tilde{\gamma}_1\tilde{\gamma}_3 \tilde{\gamma}_2 
 =\tilde{\gamma}_1(\tilde{\gamma}_3 \tilde{\gamma}_2 )\\
\vspace{-0.2cm} \\
(\tilde{\gamma}_2 \tilde{\gamma}_3) \tilde{\gamma}_1 \tilde{\gamma}_2 \tilde{\gamma}_3 \!\!&\!\!
= \tilde{\gamma}_2 \tilde{\gamma}_3 \  \tilde{\gamma}_1 \tilde{\gamma}_2 \tilde{\gamma}_3
\overset{A}{\sim} \tilde{\gamma}_2 \tilde{\gamma}_3 \tilde{\gamma}_1\ \tilde{\gamma}_3 \tilde{\gamma}_2
\overset{C^{-1}}{\!\sim} \tilde{\gamma}_1 \tilde{\gamma}_2 \tilde{\gamma}_3\ \tilde{\gamma}_3 \tilde{\gamma}_2
\overset{B}{\sim} \tilde{\gamma}_3\ \tilde{\gamma}_1 \tilde{\gamma}_2 \tilde{\gamma}_3\ \tilde{\gamma}_2 \\
\!\!&\!\! 
\overset{C}{\sim} \tilde{\gamma}_3 \tilde{\gamma}_2 \ \tilde{\gamma}_3 \tilde{\gamma}_1 \tilde{\gamma}_2 
\overset{B^{-1}}{\!\sim} \tilde{\gamma}_3 \tilde{\gamma}_2 \tilde{\gamma}_1 \tilde{\gamma}_2 \tilde{\gamma}_3 
=(\tilde{\gamma}_3 \tilde{\gamma}_2) \tilde{\gamma}_1 \tilde{\gamma}_2 \tilde{\gamma}_3 
\end{array}
\end{equation}

\noindent
Case $(\gamma_1\gamma_2\gamma_3,1), (1,\gamma_2) \in NC(\gamma_3\gamma_1, \gamma_1\gamma_3)$:

\vspace{-0.4cm}
\begin{equation}
\label{eq:3linesTwin31}
\begin{array}{rl}
(\tilde{\gamma}_3 \tilde{\gamma}_1) \tilde{\gamma}_2\!\!&\!\!
=\tilde{\gamma}_3\tilde{\gamma}_1\tilde{\gamma}_2
\overset{B^{-1}}{\!\sim} \tilde{\gamma}_1\tilde{\gamma}_2\tilde{\gamma}_3
\overset{A}{ \sim} 
 \tilde{\gamma}_1\tilde{\gamma}_3\tilde{\gamma}_2
 = (\tilde{\gamma}_1\tilde{\gamma}_3)\tilde{\gamma}_2 \\
\vspace{-0.2cm} \\
\tilde{\gamma}_1\tilde{\gamma}_2\tilde{\gamma}_3(\tilde{\gamma}_3\tilde{\gamma}_1) \!\!&\!\! 
=\tilde{\gamma}_1\tilde{\gamma}_2\tilde{\gamma}_3 \ \tilde{\gamma}_3\tilde{\gamma}_1 
\overset{A}{\sim} \tilde{\gamma}_1\tilde{\gamma}_3\ \tilde{\gamma}_2\tilde{\gamma}_3\tilde{\gamma}_1 \overset{C^{-1}}{\!\sim} 
\tilde{\gamma}_1\ \tilde{\gamma}_3\tilde{\gamma}_1\tilde{\gamma}_2\ \tilde{\gamma}_3 \\
\!\!  &\!\! 
 \overset{B^{-1}}{\!\sim} 
\tilde{\gamma}_1\ \tilde{\gamma}_1\tilde{\gamma}_2\tilde{\gamma}_3\ \tilde{\gamma}_3
\overset{C}{\sim} 
\tilde{\gamma}_1\tilde{\gamma}_2\tilde{\gamma}_3\tilde{\gamma}_1\tilde{\gamma}_3  
=\tilde{\gamma}_1\tilde{\gamma}_2\tilde{\gamma}_3(\tilde{\gamma}_1\tilde{\gamma}_3)  
\end{array}
\end{equation}

\noindent
Case $(\gamma_1\gamma_3,1), (1,\gamma_3\gamma_2) \in NC(\gamma_1\gamma_2, \gamma_2\gamma_1)$:

\vspace{-0.4cm}
\begin{equation}
\label{eq:3linesTwin12}
\begin{array}{rl}
(\tilde{\gamma}_1\tilde{\gamma}_2)\tilde{\gamma}_3 \tilde{\gamma}_2 \!\! &\!\! 
=\tilde{\gamma}_1\tilde{\gamma}_2\tilde{\gamma}_3 \ \tilde{\gamma}_2  
\overset{C}{\sim} 
\tilde{\gamma}_2 \ \tilde{\gamma}_3 \tilde{\gamma}_1 \tilde{\gamma}_2  
\overset{B^{-1}}{\!\sim} 
\tilde{\gamma}_2 \ \tilde{\gamma}_1 \tilde{\gamma}_2 \tilde{\gamma}_3\\
\!\!&\!\! \overset{A}{\sim} 
\tilde{\gamma}_2 \tilde{\gamma}_1\tilde{\gamma}_3  \tilde{\gamma}_2
=(\tilde{\gamma}_2 \tilde{\gamma}_1) \tilde{\gamma}_3  \tilde{\gamma}_2 \\
\vspace{-0.2cm}\\
\tilde{\gamma}_1\tilde{\gamma}_3 (\tilde{\gamma}_1\tilde{\gamma}_2) 
\!\! &\!\! 
=\tilde{\gamma}_1 \ \tilde{\gamma}_3 \tilde{\gamma}_1\tilde{\gamma}_2
\overset{B^{-1}}{\!\sim} 
\tilde{\gamma}_1 \ \tilde{\gamma}_1 \tilde{\gamma}_2\tilde{\gamma}_3
\overset{C}{\sim}
\tilde{\gamma}_1 \tilde{\gamma}_2 \tilde{\gamma}_3 \ \tilde{\gamma}_1 \\
\!\!&\!\! 
\overset{A}{ \sim }
\tilde{\gamma}_1\tilde{\gamma}_3\tilde{\gamma}_2 \tilde{\gamma}_1
 = 
\tilde{\gamma}_1\tilde{\gamma}_3  (\tilde{\gamma}_2  \tilde{\gamma}_1 )
\end{array}
\end{equation}

To show that above NC-tuples are reduced, we make two lists of all equivalent words of l- (and r-) shorter lengths of RHS and LHS of the relation, and check that there is no common elements. Details are left to the leader (c.f.\ Figure 4.7).\!\! 

The fact that the NC-tuples are minimal and indecomposable is trivial, since the length 2 of the words $\gamma_1\gamma_2$, ... etc.\  in the kernels is shorter than the length 3 of relations so that   the kernels are not equivalent to any other other words than themselves.
\end{proof}

\vspace{-0.1cm}
\noindent
{\bf (b)}   $\mathrm{A}_3$-arrangement.

The monoid has the following presentation (recall \S2.2 Example 4 (b)):
\begin{equation}
\begin{array}{rcl}
\label{eq:A3Monoid}
A^+(W):= \langle \tilde{\gamma}_1,\tilde{\gamma}_2,\tilde{\gamma}_3,\tilde{\gamma}_4,\tilde{\gamma}_5
\mid && \tilde{\gamma}_1\tilde{\gamma}_2\tilde{\gamma}_3  \tilde{\gamma}_4\tilde{\gamma}_5\\ 
&\sim& \tilde{\gamma}_1\tilde{\gamma}_4\tilde{\gamma}_2\tilde{\gamma}_3\tilde{\gamma}_5  \\
&\sim& \tilde{\gamma}_1\tilde{\gamma}_3\tilde{\gamma}_4\tilde{\gamma}_2\tilde{\gamma}_5  \\
&\sim& \tilde{\gamma}_1\tilde{\gamma}_3\tilde{\gamma}_4\tilde{\gamma}_5\tilde{\gamma}_2  \\
&\sim& \tilde{\gamma}_3\tilde{\gamma}_4\tilde{\gamma}_5\tilde{\gamma}_2\tilde{\gamma}_2  \\
&\sim& \tilde{\gamma}_4\tilde{\gamma}_5\tilde{\gamma}_1\tilde{\gamma}_2\tilde{\gamma}_3  \\
&\sim& \tilde{\gamma}_4\tilde{\gamma}_1\tilde{\gamma}_2\tilde{\gamma}_3\tilde{\gamma}_5 
 \rangle^+
 \end{array} 
\vspace{-0.1cm}
\end{equation}

Then, these fundamental relations show the following  non-cancellative tuples, where they are obviously reduced minimal and indecomposable.
\begin{equation}
\label{eq:A3Noncancell}
\begin{array}{rcl}
(1,\gamma_2\gamma_3\gamma_5)& \in& NC(\gamma_1\gamma_4,\gamma_4\gamma_1)\\
(\gamma_1\gamma_3\gamma_4,1)& \in & NC(\gamma_2\gamma_5,\gamma_5\gamma_2)
\end{array}
\end{equation}

Since the letters $\tilde{\gamma}_1$ and  $\tilde\gamma_4$ don't appear at the end of any word of the fundamental relations,  a word ending by  $\tilde \gamma_1$ cannot be equivalent to a word ending by  $\tilde\gamma_4$.
Similarly, since the letters $\tilde{\gamma}_2$ and  $\tilde\gamma_5$ don't appear at the top of any word of the fundamental relations,  a word starting by  $\tilde \gamma_2$ cannot be equivalent to a word starting by  $\tilde\gamma_5$.  These imply that 

\smallskip
\noindent
{\bf Fact.} {\it  There does not exist a non-cancellative tuple of the form $(P,1)$ in $NC(\gamma_1\gamma_4,\gamma_4\gamma_1)$ for any $P\in \mathcal{F}^+$. Similarly, there does not exists a non-cancellative tuple of the form $(1,P)$ in $NC(\gamma_2\gamma_5,\gamma_5\gamma_2)
$ for any $P\in \mathcal{F}^+$.}

\smallskip
 These phenomena suggest the following Question.

\smallskip
\noindent
{\bf Question.} Show that the only reduced, minimal and indecomposable non-cancellative tuples over the kernels $(\gamma_1\gamma_4,\gamma_4\gamma_1)$ and $(\gamma_2\gamma_5,\gamma_5\gamma_2)$ are the\! one\! given\! in\! \eqref{eq:A3Noncancell}.\ That is,\! all\! non-cancellative\! tuples\! over\! these kernels are generated by the element in \eqref{eq:A3Noncancell}, and {\it there does not exist a twin  reduced non-cancellative tuples over the kernels $(\gamma_1\gamma_4,\gamma_4\gamma_1)$ and $(\gamma_2\gamma_5,\gamma_5\gamma_2)$.}\!\!

\subsection{\normalsize Constructions in $\pi_2(W,W^{(1)},*)$ and in $\pi_2(W,*)$.}

\smallskip
Let us describe heuristically the (relative) 2-homotopy classes of our interest associated with  a non-cancellative tuples $(a,b,c,d)$. 
First, 
we choose representatives $\tilde{a},\tilde{b},\tilde{c},\tilde{d}\in \mathcal{F}^+$ (recall \eqref{eq:PositiveMonoid} and \eqref{eq:GeometricArtinMonoid})
 of $a,b,c,d\in A^+(W)$. 
 The NC-relation $abd\sim acd$ in the monoid induces the relation $abd= acd$ in the group $A(W)$, where one can cancel $a$ and $d$ from left and right so that we obtain $b=c$ in  $A(W)$. That is, $\tilde{b}$ and $\tilde{c}$ are homotopic in $W$, i.e.\  $[\tilde{b},\tilde{c}]\not=\emptyset$, whereas the non-equivalence $b\not\sim c$ implies that $[\tilde{b},\tilde{c}]_{\mathcal{R}_g}=\emptyset$.  That is, {\it there exist homotopy equivalences from $\tilde b$ to $\tilde c$ which are not $\mathcal{R}_g$-equivalent.} 
In the following \S4.2.2, we construct such classes in $[\tilde{b},\tilde{c}]$, using a division theory studied in \S4.2.1. 
If, further, there exists a twin NC-tuples, then the differences of two classes produce second homotopy classes of $W$ \S4.2.3, which are the objectives of present paper. 

\medskip
\noindent
{\bf 4.2.1 \quad  Division theory}

\smallskip
\noindent
For the construction of the classes described above, we prepare a division theory formulated in Proposition \ref{QuotientComposition} and its Corollary. For the formulation of the proposition, we forget about $\mathcal{F}^+$, and {\bf in the present \S4.2.1,  $\tilde{a},\tilde{b},\tilde{c},\tilde{d},\tilde{e}$ are arbitrary elements in the group $\mathcal{F}$}.\!\!

\smallskip

Recall the bijection (recall \eqref{eq:cdot} and Corollary \ref{SimpleTransitive}, 2))
$$
[\tilde b,\tilde c] \quad  \tilde{-\!\!\!\to} \quad [\tilde a \tilde b\tilde d, \tilde a \tilde c \tilde d], \quad  [H] \mapsto [T]:=\tilde{a}\cdot [H] \cdot \tilde{d}:= [1_{\tilde a}\cdot H \cdot 1_{\tilde d}]
$$
We consider its inverse map. For any   class $[T] \in [\tilde{a}\tilde{b}\tilde{d},\tilde{a}\tilde{c}\tilde{d}]$, we consider its extension to $id_{\tilde{a}^{-1} }\! \cdot\! [T] \! \cdot \! id_{\tilde{d}^{-1}} \! \in \! [\tilde{a}^{-1}\tilde{a}\tilde{b}\tilde{d}\tilde{d}^{-1},$ $\tilde{a}^{-1}\tilde{a}\tilde{c}\tilde{d}\tilde{d}^{-1}]$ (here $\tilde{a}^{-1}, \tilde{d}^{-1}$ are inverses in $\mathcal{F}$ w.r.t.\ the ``$\cdot$"-product, and are not the ``$\circ$"-inverse  \eqref{eq:circInverse0}).  
Composing this map with the standard homotopy equivalences $\tilde{a}^{\!-\!1}\tilde{a}\!\sim\!\! *$ and $\tilde{d}\tilde{d}^{-1}\! \sim\! *$ (= the constant loop at $*$), we obtain a homotopy equivalence class  $[H] \in [*\tilde{b}*,*\tilde{c}*]$. 
A representing map $H$ of $[H]$ can be obtained from a representing map $T$ of $[T]$ as in Figure 4.1 with  the rules (i)$\sim$(iii) below:

\vspace{0.2cm}
\hspace{1.2cm} 
 \includegraphics[width=0.55\textwidth]{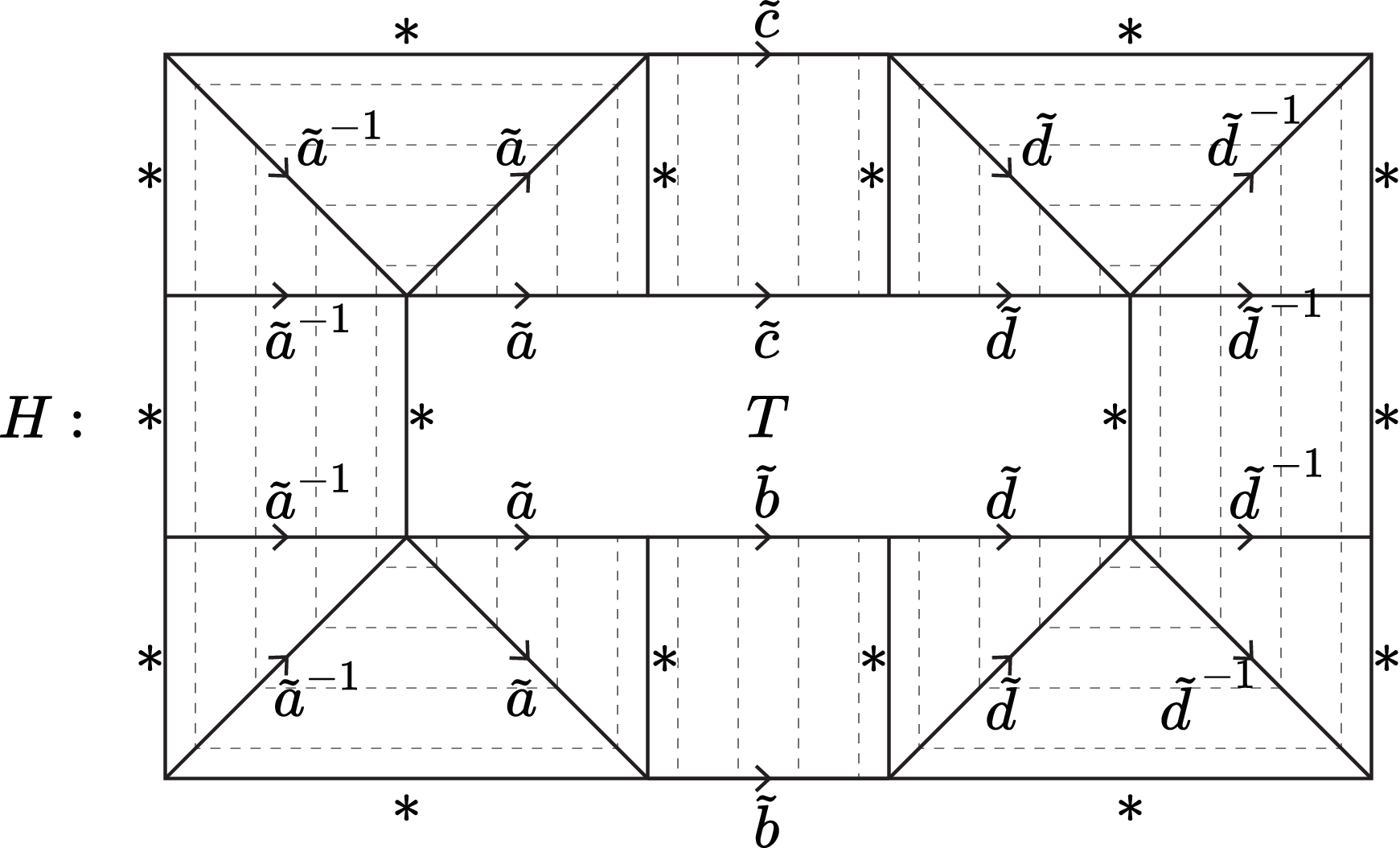}

\centerline{\quad Figure 4.1: \ Homotopy equivalence  $[H]=\tilde{a}\setminus [T]/\tilde d \in[\tilde{b},\tilde{c}]$\quad }

\bigskip
\noindent

(i) Labels $\tilde{a}^{\pm1},\tilde{b},\tilde{c},\tilde{d}^{\pm1}$ and $*$ on the edges of Figure 4.1 imply that the restriction of  $H$ on the edges coincide (up to homotopy) with the labels. 

(ii) The $H$ takes constant values along the dotted lines. Actually, the lines form a foliation, and in Figure 4.1, only samples of leaves are drawn. 

(iii)  On the central small rectangle, $H$ coincides  with $T$.

\medskip
We denote the class $[H]$ by
$\tilde{a}\backslash [T]/\tilde{d}\! \in \! [\tilde{b},\tilde{c}]$, and obtain a {\it bijective} map\!\! 
\begin{equation}
\label{eq:Relative2class}
 [\tilde{a}\tilde{b}\tilde{d},\tilde{a}\tilde{c}\tilde{d}] \quad \rightarrow \quad 
  [\tilde{b},\tilde{c}],  \qquad [T]\mapsto [H]=
 \tilde{a}\backslash [T]/\tilde{d}.
\end{equation}
which we shall call {\it $\cdot$-division}.
We should be cautious that even in case $\tilde{a},\tilde{b},\tilde{c},\tilde{d}\in \mathcal{F}^+$,  
the  $\cdot$-division processes does not preserve the $\mathcal{R}_g$-sets, since its construction is involved with operations using $\tilde{a}^{-1}$ and $\tilde{d}^{-1}$.

\medskip
\begin{prop}
\label{QuotientComposition}
1. The natural embedding
\begin{equation}
\label{eq:SplitEmbedding}
\quad [\tilde{b},\tilde{c}] \quad \to\quad  [\tilde{a}\tilde{b}\tilde{d},\tilde{a}\tilde{c}\tilde{d}], \ \ \quad  x \mapsto 1_{\tilde a}x 1_{\tilde d}
\end{equation} composed with $\cdot$-division \eqref{eq:Relative2class} is the identity map on $[\tilde{b},\tilde{c}]$.

2.  For $\tilde{a},\tilde{b},\tilde{c},\tilde{d},\tilde{e} \in \mathcal{F}$, the following diagram is commutative.\!\!
\begin{equation}
\label{eq:divisionComposit}
\begin{array}{rccc}
\vspace{0.1cm}
& [\tilde{a}\tilde{b}\tilde{d},\tilde{a}\tilde{c}\tilde{d}]\times  [\tilde{a}\tilde{c}\tilde{d},\tilde{a}\tilde{e}\tilde{d}]   & \rightarrow &   [\tilde{b},\tilde{c}] \times [\tilde{c},\tilde{e}] \\
\vspace{0.1cm}
& \downarrow && \downarrow \\
&[\tilde{a}\tilde{b}\tilde{d},\tilde{a}\tilde{e}\tilde{d}] & \rightarrow & [\tilde{b},\tilde{e}]
\end{array}
\end{equation}
where horizontal arrows are $\cdot$-divisions and down arrows are $\circ$-products.\!\!
\end{prop}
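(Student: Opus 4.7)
The plan is to reduce both parts to the basic homotopy identities recorded in Proposition~\ref{PropA}, especially part~f) (the exchange law \eqref{eq:CdotCirc}) and part~g) (the homotopy equivalences $[H_{**}^{**}]=[H_{aa^{-1}}^{**}]=[H_{**}^{aa^{-1}}]=[H_{aa^{-1}}^{**}\circ H_{**}^{aa^{-1}}]=[H_{aa^{-1}}^{aa^{-1}}]$). The $\cdot$-division operation prescribed by Figure~4.1 is a purely topological filling procedure: given a representing map $T$ placed on the central rectangle, one adjoins an $\tilde a^{-1}$-strip on the left and a $\tilde d^{-1}$-strip on the right, and the constant-value foliations realise the standard homotopies $\tilde a^{-1}\tilde a\sim *$ and $\tilde d\tilde d^{-1}\sim *$. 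Both claims of the proposition will be established by comparing two such fillings.

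For Part~1, start with $[H]\in [\tilde b,\tilde c]$ and set $[T]:=1_{\tilde a}\cdot H\cdot 1_{\tilde d}\in [\tilde a\tilde b\tilde d,\tilde a\tilde c\tilde d]$, so that the central rectangle in Figure~4.1 is exactly $H$, flanked by the trivial homotopies $1_{\tilde a}$ and $1_{\tilde d}$. After gluing on the outer $\tilde a^{-1}$- and $\tilde d^{-1}$-strips with their foliations, the result is (on the left strip, with $a$ replaced by $\tilde a$, and symmetrically on the right with $a$ replaced by $\tilde d^{-1}$) a map of type $H_{aa^{-1}}^{aa^{-1}}$ whose core is $H$. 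By Proposition~\ref{PropA}~g) applied to each outer strip, this total map is homotopy equivalent to $H_{**}^{**}$, i.e., to $H$ itself. This yields $\tilde a\setminus(1_{\tilde a}\cdot H\cdot 1_{\tilde d})/\tilde d=[H]$.

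For Part~2, fix $[T_1]\in [\tilde a\tilde b\tilde d,\tilde a\tilde c\tilde d]$ and $[T_2]\in [\tilde a\tilde c\tilde d,\tilde a\tilde e\tilde d]$ with representatives chosen so that $T_1|_{[0,1]\times\{1\}}=T_2|_{[0,1]\times\{0\}}$. On one side of \eqref{eq:divisionComposit}, the construction $\tilde a\setminus([T_1]\circ [T_2])/\tilde d$ takes the vertically-concatenated representative $T_1\circ T_2$ and surrounds it once by a full-height $\tilde a^{-1}\tilde a$-strip on the left and a full-height $\tilde d\tilde d^{-1}$-strip on the right, each carrying a single foliation. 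On the other side, $(\tilde a\setminus[T_1]/\tilde d)\circ(\tilde a\setminus[T_2]/\tilde d)$ is the vertical concatenation of two separately-surrounded squares, whose outer strips are subdivided by a horizontal seam at the midline. The two representatives coincide on the central column containing $T_1\circ T_2$, and differ on each outer strip exactly by the passage between $H_{aa^{-1}}^{aa^{-1}}$ and the stack $H_{aa^{-1}}^{**}\circ H_{**}^{aa^{-1}}$. Proposition~\ref{PropA}~g) supplies the needed equivalence of the two strip-fillings, and the exchange law \eqref{eq:CdotCirc} in part~f) assembles these strip equivalences with the common central column into a single homotopy equivalence of the full maps.

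The main obstacle is ensuring that the constant-value foliations on the outer annular region fit coherently across the horizontal seam between the two halves (and analogously on the right strip); this coherence is precisely what the identification $[H_{aa^{-1}}^{**}\circ H_{**}^{aa^{-1}}]=[H_{aa^{-1}}^{aa^{-1}}]$ in Proposition~\ref{PropA}~g) encodes. Once this is in hand, the remainder is the bookkeeping of concatenations in the first and second variables, handled uniformly by the exchange law.
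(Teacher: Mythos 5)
Your argument is correct and essentially reproduces the paper's own proof: Part 1 is the same tautological observation, and for Part 2 the paper likewise compares the two composite domains (its Figure 4.2, "left turn" vs.\ "right turn") and removes the collapse--re-expand seam in the outer strips by the equivalence $[H_{aa^{-1}}^{**}\circ H_{**}^{aa^{-1}}]=[H_{aa^{-1}}^{aa^{-1}}]$ from Proposition \ref{PropA} g), exactly as you do. Your explicit appeal to the exchange law \eqref{eq:CdotCirc} to assemble the strip equivalences with the common central column is only a slightly more detailed bookkeeping of what the paper leaves implicit in its figures.
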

\begin{proof} 
1. This is tautological, since for any $[H] \in [\tilde{b},\tilde{c}]$, we see immediately that $\tilde{a}\setminus (1_{\tilde a}\cdot [H]\cdot 1_{\tilde d})/\tilde{d}$ is homotopic to $[H]$.

2. Right turn and left turn of the diagram are given by the maps defined on the  following domains.

\vspace{0.1cm}
\hspace{-0.5cm} 
\includegraphics[width=0.85\textwidth]{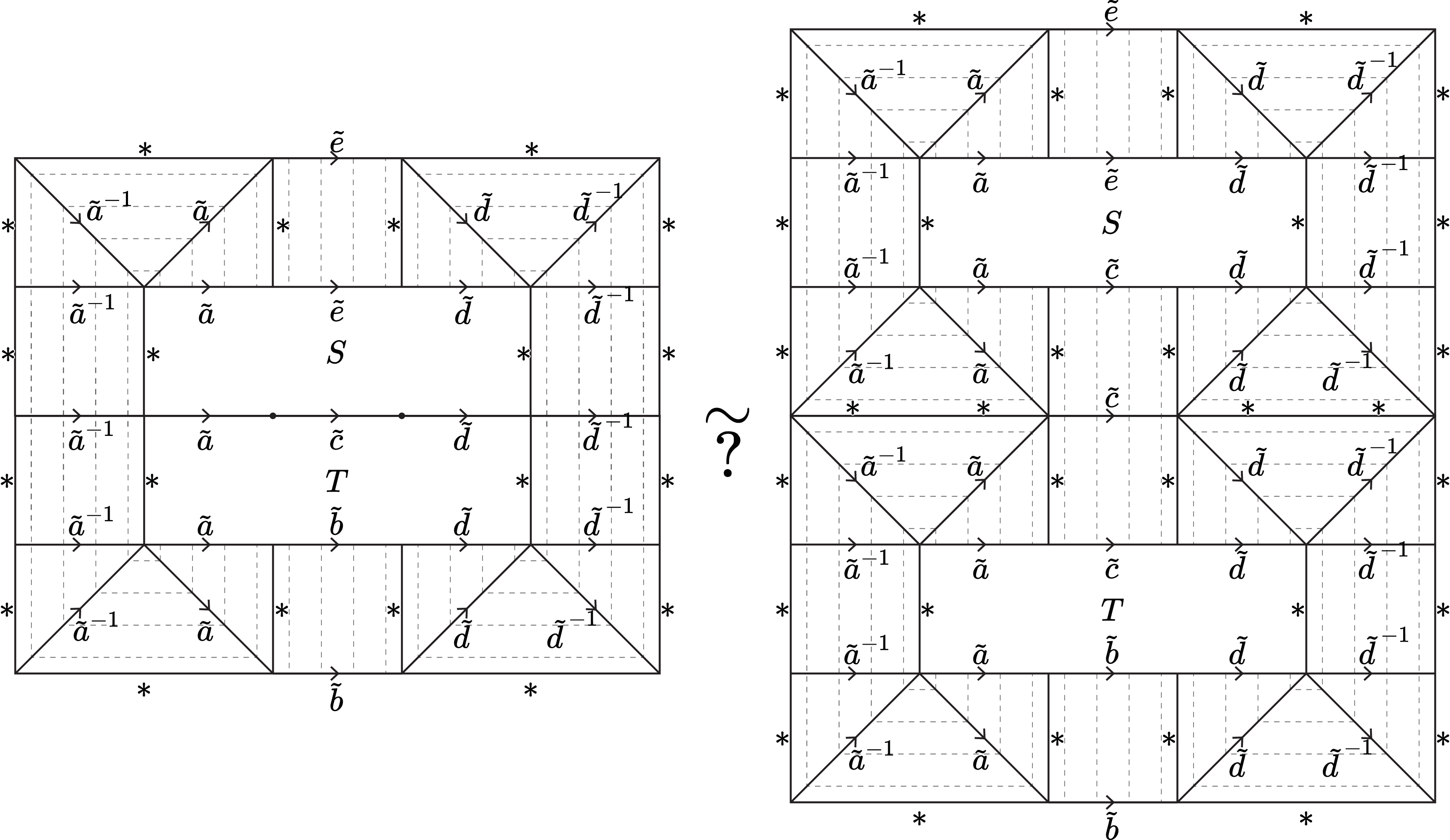}

\centerline{ Figure 4.2: \quad Left turn domain \qquad\qquad  \qquad  Right turn domain\qquad \qquad \quad  }

\bigskip
We apply Proposition \ref{PropA} g)  to the third and the fourth rows of blocks in the right turn domain in Figure 4.2. That is, the application of the homotopy equivalence of the 4th and 5th figures of g)  of  Proposition \ref{PropA} to the row 
implies that the row becomes  an identity homotopy from $\tilde{a}^{-1}\tilde{a}\tilde{c}\tilde{d}\tilde{d}^{-1}$ to itself. Shrinking the identity homotopy, we obtain the left turn domain in Figure 4.2.

\end{proof}
\begin{cor} The $\cdot$-division induces  group isomorphisms
\vspace{-0.01cm}
\begin{equation}
\label{eq:SplitProjection}
 [\tilde{a}\tilde{b}\tilde{d},\tilde{a}\tilde{b}\tilde{d}]  \to  [\tilde{b},\tilde{b}]
\text{\quad  and \quad }
 [\tilde{a}\tilde{c}\tilde{d},\tilde{a}\tilde{c}\tilde{d}]   \to   [\tilde{c},\tilde{c}] 
\end{equation}
which make the following diagram commutative
\begin{equation}
\label{eq:DivisionCommutative}
\begin{array}{rccc}
\vspace{0.01cm}
&[\tilde{a}\tilde{b}\tilde{d},\tilde{a}\tilde{b}\tilde{d}]\times [\tilde{a}\tilde{b}\tilde{d},\tilde{a}\tilde{c}\tilde{d}]\times  [\tilde{a}\tilde{c}\tilde{d},\tilde{a}\tilde{c}\tilde{d}]   & \rightarrow &   [\tilde{b},\tilde{b}]\times [\tilde{b},\tilde{c}] \times [\tilde{c},\tilde{c}] \\
\vspace{0.02cm}
& \downarrow && \downarrow \\
&[\tilde{a}\tilde{b}\tilde{d},\tilde{a}\tilde{c}\tilde{d}] & \rightarrow & [\tilde{b},\tilde{c}]
\end{array}
\end{equation}
where the horisontal arrows are  $\cdot$-divisions and down arrows are  combinations  of left and right $\circ$-action on the module in the middle.
\end{cor}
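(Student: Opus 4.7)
The plan is to derive the Corollary entirely from Proposition \ref{QuotientComposition}, whose second part gives a general compatibility between $\cdot$-division and binary $\circ$-composition; specializing the five free variables suitably will yield each assertion of the Corollary.

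First I would establish the two isomorphisms \eqref{eq:SplitProjection}. Bijectivity of $\cdot$-division as a map of sets is already supplied by \eqref{eq:Relative2class}, so only the compatibility with the $\circ$-product remains to be checked. For this I apply Proposition \ref{QuotientComposition}.2 in the special case $\tilde{c}:=\tilde{b}$ and $\tilde{e}:=\tilde{b}$. The resulting commutative square
\begin{equation*}
\begin{array}{ccc}
[\tilde{a}\tilde{b}\tilde{d},\tilde{a}\tilde{b}\tilde{d}]\times [\tilde{a}\tilde{b}\tilde{d},\tilde{a}\tilde{b}\tilde{d}] & \rightarrow & [\tilde{b},\tilde{b}]\times [\tilde{b},\tilde{b}] \\
\downarrow \circ & & \downarrow \circ \\
[\tilde{a}\tilde{b}\tilde{d},\tilde{a}\tilde{b}\tilde{d}] & \rightarrow & [\tilde{b},\tilde{b}]
\end{array}
\end{equation*}
says precisely that the horizontal $\cdot$-division intertwines the $\circ$-products on source and target, so it is a group homomorphism, hence a group isomorphism. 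The analogous substitution $\tilde{b}:=\tilde{c}$, $\tilde{e}:=\tilde{c}$ yields the second isomorphism $[\tilde{a}\tilde{c}\tilde{d},\tilde{a}\tilde{c}\tilde{d}]\to[\tilde{c},\tilde{c}]$.

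Next I would verify the commutativity of \eqref{eq:DivisionCommutative} by splitting the triple $\circ$-composition into two consecutive binary compositions and invoking Proposition \ref{QuotientComposition}.2 twice. For $([g],[T],[h])$ with $[g]\in [\tilde{a}\tilde{b}\tilde{d},\tilde{a}\tilde{b}\tilde{d}]$, $[T]\in [\tilde{a}\tilde{b}\tilde{d},\tilde{a}\tilde{c}\tilde{d}]$, $[h]\in [\tilde{a}\tilde{c}\tilde{d},\tilde{a}\tilde{c}\tilde{d}]$, first specialize $(\tilde{b},\tilde{c},\tilde{e}):=(\tilde{b},\tilde{b},\tilde{c})$ in Proposition \ref{QuotientComposition}.2 to get
\begin{equation*}
\tilde{a}\backslash([g]\circ[T])/\tilde{d}\ =\ (\tilde{a}\backslash [g]/\tilde{d})\circ (\tilde{a}\backslash [T]/\tilde{d}),
\end{equation*}
and then specialize $(\tilde{b},\tilde{c},\tilde{e}):=(\tilde{b},\tilde{c},\tilde{c})$ to get
\begin{equation*}
\tilde{a}\backslash([g]\circ[T]\circ[h])/\tilde{d}\ =\ (\tilde{a}\backslash [g]/\tilde{d})\circ (\tilde{a}\backslash [T]/\tilde{d})\circ (\tilde{a}\backslash [h]/\tilde{d}).
\end{equation*}
Chaining these two equalities is exactly the commutativity of \eqref{eq:DivisionCommutative}, where the down arrows are the combined left-right $\circ$-actions.

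The hard part, if any, is purely bookkeeping: making the variable substitutions in Proposition \ref{QuotientComposition} in the right order so that both self-loop pieces $[\tilde{a}\tilde{b}\tilde{d},\tilde{a}\tilde{b}\tilde{d}]$ and $[\tilde{a}\tilde{c}\tilde{d},\tilde{a}\tilde{c}\tilde{d}]$ and the transition piece $[\tilde{a}\tilde{b}\tilde{d},\tilde{a}\tilde{c}\tilde{d}]$ are respected simultaneously. No new homotopies on $[0,1]^2$ need to be constructed, because all the underlying geometric work, in particular the use of Proposition \ref{PropA} g) for the folding of $\tilde{a}^{-1}\tilde{a}$ and $\tilde{d}\tilde{d}^{-1}$, has already been absorbed into Proposition \ref{QuotientComposition}.
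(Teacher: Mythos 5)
Your proposal is correct and follows exactly the route the paper intends: the Corollary is stated without a separate proof precisely because it is the specialization of Proposition \ref{QuotientComposition} (bijectivity of the $\cdot$-division \eqref{eq:Relative2class} plus part 2 applied twice with the appropriate substitutions of $\tilde{b},\tilde{c},\tilde{e}$), which is what you carry out. Your bookkeeping of the substitutions and the splitting of the triple $\circ$-composition into two binary ones is accurate, so nothing further is needed.
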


\medskip
\noindent
{\bf 4.2.2\quad  Relative $\pi_2$-classes associated with NC-tuples}

\smallskip
\noindent
{\bf We return to the setting where $a,b,c,d\in A^+(W)$ form a non-cancellative tuple, satisfying the relation \eqref{eq:NCRelation}}.  Let  $\tilde{a},\tilde{b},\tilde{c},\tilde{d}\in \mathcal{F}^+$ are any liftings of them. So, $[\tilde{a}\tilde{b}\tilde{d},\tilde{a}\tilde{c}\tilde{d}]_{\mathcal{R}_g}$  is a non-empty  $\mathcal{R}_g$-homotopy equivalence set (Proposition \ref{GeometricMonoid}).  We apply now the $\cdot$-division \eqref{eq:Relative2class} to this setting. We consider the following set and groups.  
}

(1)  The  image of $\mathcal{R}_g$-set by the division:
\begin{equation}
\label{eq:DivisionRgset}
\qquad \tilde{a}\backslash [\tilde{a}\tilde{b}\tilde{d},\tilde{a}\tilde{c}\tilde{d}]_{\mathcal{R}_g}/\tilde{d} \quad \subset \quad  [\tilde{b},\tilde{c}],
\end{equation}
shall be called  the {\it division $\mathcal{R}_g$-set} (note that it 
 is not an $\mathcal{R}_g$-set, since, by assumption $\tilde{b}\not \sim \tilde{c}$ of a non-cancellative tuple, we have $ [\tilde{b},\tilde{c}]_{\mathcal{R}_g}=\emptyset$). 

(2) The image of inertia groups by the division:
\begin{equation}
\label{eq:InertiaDivision}
 \begin{array}{c}
\qquad \tilde{a}\backslash [\tilde{a}\tilde{b}\tilde{d},\tilde{a}\tilde{b}\tilde{d}]_{\mathcal{R}_g}/\tilde{d} \quad \subset \quad  [\tilde{b},\tilde{b}]\\
\qquad \tilde{a}\backslash [\tilde{a}\tilde{c}\tilde{d},\tilde{a}\tilde{c}\tilde{d}]_{\mathcal{R}_g}/\tilde{d} \quad \subset \quad  [\tilde{c},\tilde{c}],
\end{array}
\end{equation}
shall be called the {\it division inertia groups} (that it is closed under product can be shown by a multiple applications of Proposition \ref{QuotientComposition} 2.). They contain  the inertia groups $ [\tilde{b},\tilde{b}]_{\mathcal{R}_g}$ and $ [\tilde{c},\tilde{c}]_{\mathcal{R}_g}$ as subgroups, respectively (recall  Proposition \ref{QuotientComposition} 1. \eqref{eq:SplitEmbedding}). However, in general, they are larger than the inertia groups, which make these groups important.

In the following proposition, we see that 
1.\ the division inertia groups act on the division $\mathcal{R}_g$-set simple and transitively,  and 
2. the division sets are independent on the choices of the liftings $\tilde{a}$ and $\tilde{d}$.\!\!

\medskip
\begin{prop} 
\label{LocalToDvisionInertia}
1.  The actions of division inertia groups $\tilde{a}\backslash [\tilde{a}\tilde{b}\tilde{d},\tilde{a}\tilde{b}\tilde{d}]_{\mathcal{R}_g}/\tilde{d}$  and $\tilde{a}\backslash  [\tilde{a}\tilde{c}\tilde{d},\tilde{a}\tilde{c}\tilde{d}]_{\mathcal{R}_g}/\tilde{d}$  on $[\tilde{b},\tilde{c}] $ from left and right, respectively,  preserve the division $\mathcal{R}_g$-set   
$ \tilde{a}\backslash [\tilde{a}\tilde{b}\tilde{d},\tilde{a}\tilde{c}\tilde{d}]_{\mathcal{R}_g}/\tilde{d} $ invariant,  and  are simple and transitive. 
So we obtain a diagram of left and right actions of the groups: 
\begin{equation}
\label{eq:R_g-invariance1}
\begin{array}{c}
\vspace{0.2cm}
 (\tilde{a}\backslash [\tilde{a}\tilde{b}\tilde{d},\tilde{a}\tilde{b}\tilde{d}]_{\mathcal{R}_g}/\tilde{d} )
\times
(\tilde{a}\backslash [\tilde{a}\tilde{b}\tilde{d},\tilde{a}\tilde{c}\tilde{d}]_{\mathcal{R}_g}/\tilde{d} ) 
\times
(\tilde{a}\backslash  [\tilde{a}\tilde{c}\tilde{d},\tilde{a}\tilde{c}\tilde{d}]_{\mathcal{R}_g}/\tilde{d}) \!\!\!\\
 \to  \qquad \quad \
\tilde{a}\backslash [\tilde{a}\tilde{b}\tilde{d},\tilde{a}\tilde{c}\tilde{d}]_{\mathcal{R}_g}/\tilde{d} \qquad \qquad 
\end{array}
\end{equation}


2. Let $\tilde a_i,\tilde b_i,\tilde c_i, \tilde d_i \!\in \! \mathcal{F}^+$ ($i\!=\!1,2$) be  representatives of $a,b,c,d \! \in\!  A^+(W)$.\!\!  Choose $[B]\in [\tilde b_1,\tilde b_2]_{\mathcal{R}_g}$ and $[C]\in [\tilde c_1,\tilde c_2]_{\mathcal{R}_g}$.   Then, the bijections
$$
\begin{array}{rccc}
\!\!&  [\tilde b_1,\tilde b_1] \simeq [\tilde b_2,\tilde b_2], &   [\tilde b_1,\tilde c_1] \simeq   [\tilde b_2,\tilde c_2] &  [\tilde c_1,\tilde c_1]
    \simeq  [\tilde c_2,\tilde c_2] \\
\!\!& [H] \mapsto [B]^{-1}\!\! \circ\! [H] \! \circ\! [B],&  [H] \mapsto [B]^{-1}\!\! \circ\! [H]\! \circ \![C],&
[H] \mapsto [C]^{-1} \!\!\circ \! [H] \!\circ\! [C] 
\end{array}
$$
(where {\small $[B]^{-1} \!\in\! [\tilde b_2,\tilde b_1]_{\mathcal{R}_g}$ and 
$[C]^{-1} \! \in\! [\tilde c_2,\tilde c_1]_{\mathcal{R}_g}$} 
are the ``\ $\circ$"-inverses, ({\rm \S3.1.1  Prop.\ \ref{PropA}}, c))
 induce the bijection between the division $\mathcal{R}_g$-sets 
\begin{equation}
\label{eq:R_g-invariance2}
  \tilde{a}_1\backslash [\tilde{a}_1\tilde{b}_1\tilde{d}_1,\tilde{a}_1\tilde{c}_1\tilde{d}_1]_{\mathcal{R}_g}/\tilde{d}_1
  \       \simeq  \  
\tilde{a_2}\backslash [\tilde{a}_2\tilde{b}_2\tilde{d}_2,\tilde{a}_2\tilde{c}_2\tilde{d}_2]_{\mathcal{R}_g}/\tilde{d}_2.
\end{equation} 
and the isomorphisms between the division inertia groups 
\begin{equation}
\label{eq:InertiaDvisions}
 \begin{array}{c}
 \tilde{a}_1\backslash [\tilde{a}_1\tilde{b}_1\tilde{d}_1,\tilde{a}_1\tilde{b}_1\tilde{d}_1]_{\mathcal{R}_g}/\tilde{d}_1 
\ \simeq \
 \tilde{a}_2\backslash [\tilde{a}_2\tilde{b}_2\tilde{d}_2,\tilde{a}_2\tilde{b}_2\tilde{d}_2]_{\mathcal{R}_g}/\tilde{d}_2 
\\
 \tilde{a}_1\backslash [\tilde{a}_1\tilde{c}_1\tilde{d}_1,\tilde{a}_1\tilde{c}_1\tilde{d}_1]_{\mathcal{R}_g}/\tilde{d}_1 
\ \simeq \
 \tilde{a_2}\backslash [\tilde{a}_2\tilde{c}_2\tilde{d}_2,\tilde{a}_2\tilde{c}_2\tilde{d}_2]_{\mathcal{R}_g}/\tilde{d}_2.
\end{array}
\end{equation}

\end{prop}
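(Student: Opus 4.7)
For Part 1, the plan is to transport the simple-transitive action of local inertia groups established in Proposition \ref{InertiaAction} across the bijection \eqref{eq:Relative2class}. Concretely, the $\cdot$-division restricts to a bijection of $[\tilde a\tilde b\tilde d,\tilde a\tilde c\tilde d]_{\mathcal{R}_g}$ onto the division $\mathcal{R}_g$-set by the very definition \eqref{eq:DivisionRgset}, and it restricts to the group isomorphisms \eqref{eq:SplitProjection} (in the Corollary following Proposition \ref{QuotientComposition}) of the local inertia groups at $\tilde a\tilde b\tilde d$ and $\tilde a\tilde c\tilde d$ onto the division inertia groups. The commutative diagram \eqref{eq:DivisionCommutative} in that Corollary then says exactly that the left and right $\circ$-actions transport correctly through division, so the simple-transitive action of the local inertia groups on $[\tilde a\tilde b\tilde d,\tilde a\tilde c\tilde d]_{\mathcal{R}_g}$ (from Proposition \ref{InertiaAction}) yields the claimed simple-transitive action of the division inertia groups on the division $\mathcal{R}_g$-set. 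Invariance of the division $\mathcal{R}_g$-set under these actions reduces to the closure of $\mathcal{R}_g$-sets under $\circ$ given by \eqref{eq:circ2}.

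For Part 2, the key is to reduce the comparison between the two division $\mathcal{R}_g$-sets to the statement, already available, that composition on either side with an $\mathcal{R}_g$-class gives a bijection between $\mathcal{R}_g$-sets at the pre-division level \eqref{eq:R-invert}. First I would pick auxiliary classes $[A]\in[\tilde a_1,\tilde a_2]_{\mathcal{R}_g}$ and $[D]\in[\tilde d_1,\tilde d_2]_{\mathcal{R}_g}$, which exist because $\tilde a_1 \sim_{\mathcal{R}_g} \tilde a_2$ and $\tilde d_1 \sim_{\mathcal{R}_g} \tilde d_2$ by Proposition \ref{GeometricMonoid}. Then $[A]\cdot[B]\cdot[D]\in[\tilde a_1\tilde b_1\tilde d_1,\tilde a_2\tilde b_2\tilde d_2]_{\mathcal{R}_g}$ and $[A]\cdot[C]\cdot[D]\in[\tilde a_1\tilde c_1\tilde d_1,\tilde a_2\tilde c_2\tilde d_2]_{\mathcal{R}_g}$ by \eqref{eq:cdot2}, and \eqref{eq:R-invert} gives a bijection
\[
[T_1]\ \longmapsto\ [T_2]\ :=\ ([A]\cdot[B]\cdot[D])^{-1}\circ[T_1]\circ([A]\cdot[C]\cdot[D])
\]
between $[\tilde a_1\tilde b_1\tilde d_1,\tilde a_1\tilde c_1\tilde d_1]_{\mathcal{R}_g}$ and $[\tilde a_2\tilde b_2\tilde d_2,\tilde a_2\tilde c_2\tilde d_2]_{\mathcal{R}_g}$.

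The crucial identity to establish is then
\[
\tilde a_2\backslash[T_2]/\tilde d_2\ =\ [B]^{-1}\circ(\tilde a_1\backslash[T_1]/\tilde d_1)\circ[C].
\]
By Proposition \ref{QuotientComposition}(1), writing $[H_1]:=\tilde a_1\backslash[T_1]/\tilde d_1$, we have $[T_1]=1_{\tilde a_1}\cdot[H_1]\cdot1_{\tilde d_1}$; substituting and expanding $([A]\cdot[B]\cdot[D])^{-1}=[A]^{-1}\cdot[B]^{-1}\cdot[D]^{-1}$ (which follows from the interchange law f) of Proposition \ref{PropA} together with the definitions of the two inverses), the interchange law applied twice gives
\[
[T_2]\ =\ \bigl([A]^{-1}\!\circ1_{\tilde a_1}\!\circ[A]\bigr)\cdot\bigl([B]^{-1}\!\circ[H_1]\!\circ[C]\bigr)\cdot\bigl([D]^{-1}\!\circ1_{\tilde d_1}\!\circ[D]\bigr).
\]
Since $[A]^{-1}\circ1_{\tilde a_1}\circ[A]=1_{\tilde a_2}$ and $[D]^{-1}\circ1_{\tilde d_1}\circ[D]=1_{\tilde d_2}$ by Proposition \ref{PropA} b), c), the right-hand side is $1_{\tilde a_2}\cdot([B]^{-1}\circ[H_1]\circ[C])\cdot1_{\tilde d_2}$, and applying Proposition \ref{QuotientComposition}(1) once more yields the desired identity. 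This both establishes the bijection \eqref{eq:R_g-invariance2} and, by specializing to $\tilde c_i=\tilde b_i$ (resp.\ $\tilde b_i=\tilde c_i$) and using $[B]$ (resp.\ $[C]$) as both conjugators, yields the group isomorphisms \eqref{eq:InertiaDvisions}.

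The main obstacle is the bookkeeping in the interchange computation: one must keep straight which classes live on which edge of the square and verify that the two inverses ($\circ$-inverse versus $\cdot$-inverse) interact with the interchange law as expected; once that is settled, the independence on the choice of the auxiliary $[A],[D]$ drops out automatically because the final formula contains only $[B]$ and $[C]$. All remaining claims — that both actions in \eqref{eq:R_g-invariance1} are compatible, and that the isomorphisms in \eqref{eq:InertiaDvisions} really are group isomorphisms — then follow formally from the $\circ$-product preservation in the Corollary to Proposition \ref{QuotientComposition}.
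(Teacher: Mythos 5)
Your proposal is correct and follows essentially the same route as the paper: Part 1 by transporting the simple--transitive action of the local inertia groups through the division bijection via its compatibility with the $\circ$- and $\cdot$-products, and Part 2 by choosing auxiliary $[A]\in[\tilde a_1,\tilde a_2]_{\mathcal{R}_g}$, $[D]\in[\tilde d_1,\tilde d_2]_{\mathcal{R}_g}$, conjugating upstairs by $[A]\cdot[B]\cdot[D]$ and $[A]\cdot[C]\cdot[D]$, and cancelling the $A$- and $D$-factors through the interchange law so that the map descends to conjugation by $[B]$, $[C]$ on the division $\mathcal{R}_g$-sets. The only difference is cosmetic: the paper verifies the key identity pictorially (Figure 4.3), while you carry out the same interchange-law computation algebraically.
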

\begin{proof} 
1. That each division inertia group \eqref{eq:InertiaDivision} acts on the division $\mathcal{R}_g$-set \eqref{eq:DivisionRgset} transitively follows from the fact that, before the division,  inertia groups act on $\mathcal{R}_g$-set transitively (Proposition \ref{InertiaAction}) and the commutativity \eqref{eq:divisionComposit} of the actions with $\cdot$-division. That the actions are simple is a consequence of the fact that the groups are subgroups of $[\tilde{b},\tilde{b}]$ and $[\tilde{c},\tilde{c}]$, respectively, which act simply (Corollary \ref{SimpleTransitive}, 1)).

An alternative direct proof on the existence of the action is as follows.\!

Let's take elements $[H]\in \tilde{a}\backslash [\tilde{a}\tilde{b}\tilde{d},\tilde{a}\tilde{c}\tilde{d}]_{\mathcal{R}_g}/\tilde{d}$, $[H_b]\in [\tilde{b},\tilde{b}]_{\mathcal{R}_g}$ and $[H_c]\in [\tilde{c},\tilde{c}]_{\mathcal{R}_g}$. We set $[T]:=\tilde{a}\cdot [H]\cdot \tilde{d}:=[1_{\tilde a}\cdot H \cdot 1_{\tilde d}] \in [\tilde{a}\tilde{b}\tilde{d},\tilde{a}\tilde{c}\tilde{d}]_{\mathcal{R}_g}$,
We want to show $[H_b]\circ [H]\circ [H_c]\in \tilde{a}\backslash [\tilde{a}\tilde{b}\tilde{d},\tilde{a}\tilde{c}\tilde{d}]_{\mathcal{R}_g}/\tilde{d}$.  

Using the commutativity \eqref{eq:CdotCirc}, we calculate
$$
\begin{array}{ccl}
&& \tilde{a}\cdot ([H_b]\circ [H]\circ [H_c]) \cdot \tilde{d}\\
&:=& [1_{\tilde a}\cdot (H_b \circ H\circ H_c) \cdot 1_{\tilde d}] \\
&=&  [(1_{\tilde a}\circ 1_{\tilde a}\circ 1_{\tilde a})\cdot (H_b \circ H\circ H_c) \cdot (1_{\tilde d}\circ 1_{\tilde d}\circ 1_{\tilde d})] \\
&=&  [(1_{\tilde a}\cdot H_b \cdot 1_{\tilde d}) \circ (1_{\tilde a} \cdot H \cdot 1_{\tilde d}) \circ (1_{\tilde a}\cdot  H_c \cdot 1_{\tilde d})] \\
&=&  [1_{\tilde a}\cdot H_b \cdot 1_{\tilde d}] \circ [1_{\tilde a} \cdot H \cdot 1_{\tilde d}] \circ [1_{\tilde a}\cdot  H_c \cdot 1_{\tilde d}] \\
\end{array}
$$
where, by assumptions on $H_b, H_c$ and $H$, we know that 
\vspace{-0.1cm}
$$
[1_{\tilde a}\cdot H_c \cdot 1_{\tilde d}] \in [\tilde{a}\tilde{b}\tilde{d}, \tilde{a}\tilde{b}\tilde{d}]_{\mathcal{R}_g},
 [1_{\tilde a} \cdot H \cdot 1_{\tilde d}]  \in [\tilde{a}\tilde{b}\tilde{d}, \tilde{a}\tilde{c}\tilde{d}]_{\mathcal{R}_g},
   [1_{\tilde a}\cdot  H_c \cdot 1_{\tilde d}] 
    \in [\tilde{a}\tilde{c}\tilde{d}, \tilde{a}\tilde{c}\tilde{d}]_{\mathcal{R}_g}.
$$
That is, all ``$\circ$"-product factors of the last expression belongs to the $\mathcal{R}_g$-sets so that the product belongs to the $\mathcal{R}_g$-set $[\tilde{a}\tilde{b}\tilde{d}, \tilde{a}\tilde{c}\tilde{d}]_{\mathcal{R}_g}$. That is, $[H_b]\circ [H]\circ [H_c]$ belongs to the division  $\mathcal{R}_g$-set, i.e.\ \eqref{eq:R_g-invariance1}.

\smallskip
2.  Choose also  $[ A]\in [\tilde a_1,\tilde a_2]_{\mathcal{R}_g}$ and $[D]\in [\tilde d_1,\tilde d_2]_{\mathcal{R}_g}$, and conisder the``$\cdot$"-compositions  $[A][B][D] (:=[A]\cdot[B]\cdot[D])\in [\tilde{a_1}\tilde{b_1}\tilde{d_1},\tilde{a_2}\tilde{b_2}\tilde{d_2}]_{\mathcal{R}_g}$ and  
 $[A][C][D] (:=[A]\cdot[C]\cdot[D])\in [\tilde{a_1}\tilde{c_1}\tilde{d_1},\tilde{a_2}\tilde{c_2}\tilde{d_2}]_{\mathcal{R}_g}$. 
 Then,  the $\circ$-invertibility \eqref{eq:R-invert} implies 
the natural bijection of $\mathcal{R}_g$-sets 
\vspace{-0.1cm}
$$
\begin{array}{ccl}
\!\!\![\tilde a_1\tilde b_1 \tilde d_1,\tilde a_1 \tilde c_1 \tilde d_1]_{\mathcal{R}_g}
&  \tilde{-\!\!\! \to} &\!\!
[\tilde a_2\tilde b_2 \tilde d_2,\tilde a_2 \tilde c_2 \tilde d_2]_{\mathcal{R}_g},
\\
\ [T_1] & \mapsto &\! \! [T_2]:=([A][B][D])^{-1}\!\circ [T_1]\circ ([A][C][D]).\!\!
  \end{array}
\vspace{-0.1cm}
$$
and the natural bijection of inertia groups
$$
\begin{array}{ccl}
\!\!\![\tilde a_1\tilde b_1 \tilde d_1,\tilde a_1 \tilde b_1 \tilde d_1]_{\mathcal{R}_g}
&  \tilde{-\!\!\! \to} &\!\!
[\tilde a_2\tilde b_2 \tilde d_2,\tilde a_2 \tilde b_2 \tilde d_2]_{\mathcal{R}_g},
\\
\ [T_1] & \mapsto &\! \! [T_2]:=([A][B][D])^{-1}\!\circ [T_1]\circ ([A][B][D]).\\
\!\!\![\tilde a_1\tilde c_1 \tilde d_1,\tilde a_1 \tilde c_1 \tilde d_1]_{\mathcal{R}_g}
&  \tilde{-\!\!\! \to} &\!\!
[\tilde a_2\tilde c_2 \tilde d_2,\tilde a_2 \tilde c_2 \tilde d_2]_{\mathcal{R}_g},
\\
\ [T_1] & \mapsto &\! \! [T_2]:=([A][C][D])^{-1}\!\circ [T_1]\circ ([A][C][D]).\\
  \end{array}
\vspace{-0.1cm}
$$

\medskip
Next, let us consider the following 3 diagrams:
\begin{equation}
\label{eq:Commutative}
\begin{array}{cccccc}
 \ [\tilde b_1,\tilde c_1]  &\!\!\!\! \tilde{-\!\!\! \rightarrow} \!\!\!\!& \qquad [\tilde b_2,\tilde c_2]  \qquad\ , & [H_1]  & \!\!\! \mapsto \!\!\! & \!\!\!\!  [B]^{-1} \circ [H_1] \circ [C]   \\
\vspace{-0.3cm}
\\
 \downarrow  &                                   &  \!\!\! \downarrow  & \downarrow  &  & \ \ \ \ \ \downarrow \\
\vspace{-0.3cm}
\\
\!\!\!\! [\tilde a_1\tilde b_1 \tilde d_1,\tilde a_1 \tilde c_1 \tilde d_1]
& \!\!\!\!  \tilde{-\!\!\! \to} \!\!\!\! &\!
[\tilde a_2\tilde b_2 \tilde d_2,\tilde a_2 \tilde c_2 \tilde d_2],  & [\tilde a_1 H_1 \tilde d_1] &\!\!\! \mapsto \!\!\!& \ \ [X] \quad \!\!\! \overset{?}{=} \!\!\quad [Y],\qquad \  \ \\
\\
\ [\tilde b_1,\tilde b_1]  &\!\!\!\! \tilde{-\!\!\! \rightarrow} \!\!\!\!& \qquad [\tilde b_2,\tilde b_2]  \qquad\ , & [H_1]  & \!\!\! \mapsto \!\!\! & \!\!\!\!  [B]^{-1} \circ [H_1] \circ [B]   \\
\vspace{-0.3cm}
\\
 \downarrow  &                                   &  \!\!\! \downarrow  & \downarrow  &  & \ \ \ \ \ \downarrow \\
\vspace{-0.3cm}
\\
\!\!\!\! [\tilde a_1\tilde b_1 \tilde d_1,\tilde a_1 \tilde b_1 \tilde d_1]
& \!\!\!\!  \tilde{-\!\!\! \to} \!\!\!\! &\!
[\tilde a_2\tilde b_2 \tilde d_2,\tilde a_2 \tilde b_2 \tilde d_2],  & [\tilde a_1 H_1 \tilde d_1] &\!\!\! \mapsto \!\!\!& \ \ [X] \quad \!\!\! \overset{?}{=} \!\!\quad [Y],\qquad \  \ \\
\\
\ [\tilde c_1,\tilde c_1]  &\!\!\!\! \tilde{-\!\!\! \rightarrow} \!\!\!\!& \qquad [\tilde c_2,\tilde c_2]  \qquad\ , & [H_1]  & \!\!\! \mapsto \!\!\! & \!\!\!\!  [C]^{-1} \circ [H_1] \circ [C]   \\
\vspace{-0.3cm}
\\
 \downarrow  &                                   &  \!\!\! \downarrow  & \downarrow  &  & \ \ \ \ \ \downarrow \\
\vspace{-0.3cm}
\\
\!\!\!\! [\tilde a_1\tilde c_1 \tilde d_1,\tilde a_1 \tilde c_1 \tilde d_1]
& \!\!\!\!  \tilde{-\!\!\! \to} \!\!\!\! &\!
[\tilde a_2\tilde c_2 \tilde d_2,\tilde a_2 \tilde c_2 \tilde d_2],  & [\tilde a_1 H_1 \tilde d_1] &\!\!\! \mapsto \!\!\!& \ \ [X] \quad \!\!\! \overset{?}{=} \!\!\quad [Y].\qquad \  \ \\
\\
\end{array}
\vspace{-0.4cm}
\end{equation}
where we ask, for each diagram,  whether the image $[X]$ obtained from the starting datum $[H_1]$ by the left-turn of the diagram  coincides with the image $[Y]$ obtained by the right turn of the diagram. 

We show the coincidence of $[X]$ and $[Y]$ only for the first case, since the other two cases are shown by completely parallel arguments.

Let us describe explicitly the maps $X$ and $Y$ representing the classes $[X]$ and $[Y]$.

\vspace{0.1cm}
\hspace{-0.28cm} 
\includegraphics[width=0.83\textwidth]{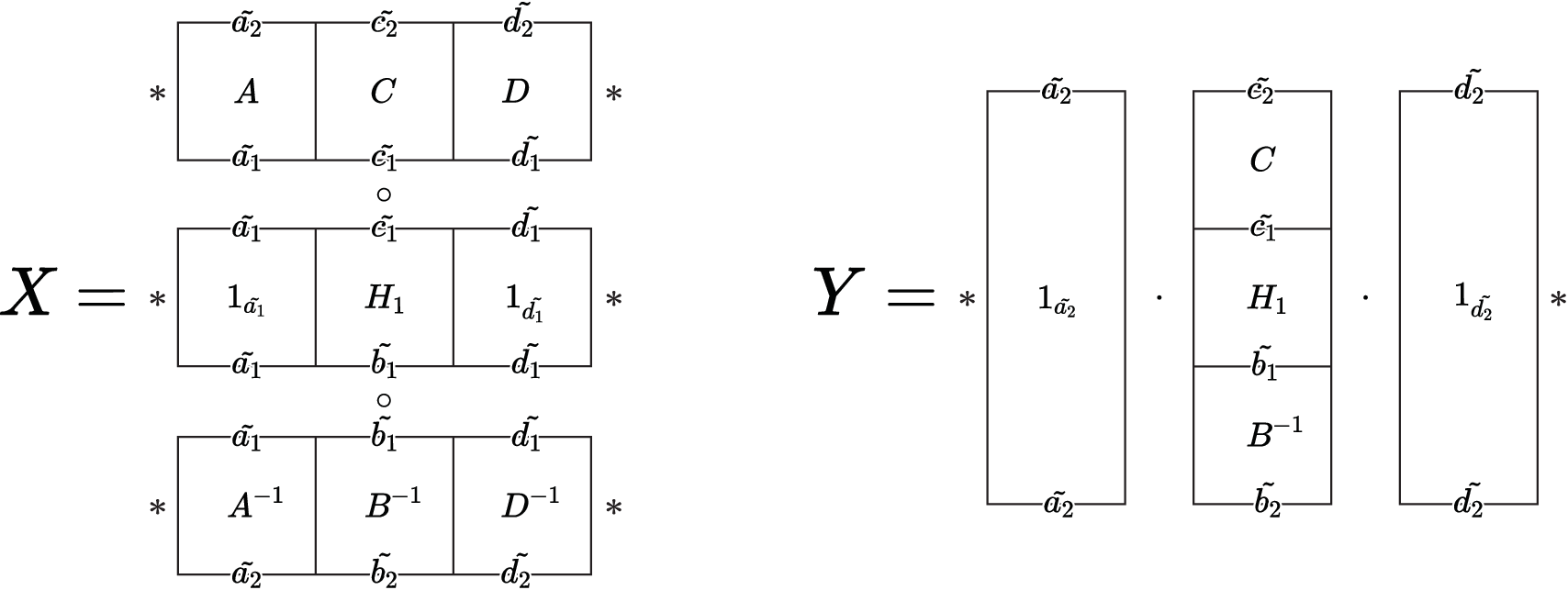}

\vspace{0.0cm}
\centerline{\  Figure 4.3: \ 2 presentations of a relative homotopy class in $[\tilde{a}_2\tilde{b}_2\tilde{d}_2,\tilde{a}_2\tilde{c}_2\tilde{d}_2]$ }

\bigskip

Recalling the commutativity \eqref{eq:CdotCirc} of Proposition \ref{PropA}, f), the above figure implies that, in order to show the homotopy equivalence $X\sim Y$, it is sufficient to show the homotopy equivalences of the left and right  sides of $X$ and $Y$.  This is achieved trivially: 
$$
A^{-1}\circ 1_{\tilde a_1}\circ A \sim A^{-1}\circ A \sim 1_{\tilde a_2}   \quad \text{and} \quad
D^{-1}\circ 1_{\tilde d_1}\circ D \sim D^{-1}\circ D \sim 1_{\tilde d_2} .
$$
(use Proposition \ref{PropA}, c), i.e.\  the pair $A^{-1}$ and $A$ and also the pair $D^{-1}$ and $D$ cancels each other so that their roles were auxiliary). 

Suppose  $[H_1]$ at the starting belongs to  
$  \tilde{a_1}\backslash [\tilde{a_1}\tilde{b_1}\tilde{d_1},\tilde{a_1}\tilde{c_1}\tilde{d_1}]_{\mathcal{R}_g}/\tilde{d_1}$. It is the same to say that its down-image $[T_1]$ belongs to  
$ [\tilde{a_1}\tilde{b_1}\tilde{d_1},\tilde{a_1}\tilde{c_1}\tilde{d_1}]_{\mathcal{R}_g}$, so its image $[T_2] (:=[X])$ in RHS belongs to $[\tilde a_2\tilde b_2 \tilde d_2,\tilde a_2 \tilde c_2 \tilde d_2]_{\mathcal{R}_g}$. The equality $[X]=[Y]$ implies that the down image $[T_2] (:=[Y])$ of $[B]^{-1} \circ [H_1] \circ [C] \in [\tilde b_2,\tilde c_2] $ belongs to $[\tilde a_2\tilde b_2 \tilde d_2,\tilde a_2 \tilde c_2 \tilde d_2]_{\mathcal{R}_g}$. That is, $[B]^{-1} \circ [H_1] \circ [C]$ belongs to $\tilde{a_2}\backslash [\tilde{a_2}\tilde{b_2}\tilde{d_2},\tilde{a_2}\tilde{c_2}\tilde{d_2}]_{\mathcal{R}_g}/\tilde{d_2} $. 

So, the restriction of the correspondence $[\tilde b_1,\tilde c_1]  \to [\tilde b_2,\tilde c_2]$ to the subset induces one direction of  \eqref{eq:R_g-invariance2}. Applying the above result to the inverse correspondence $[\tilde b_2,\tilde c_2]  \to [\tilde b_1,\tilde c_1]$, we obtain the opposite direction so that we get the bijection  \eqref{eq:R_g-invariance2}. 
\end{proof} 

\begin{cor}
The division $\mathcal{R}_g$-set $\tilde{a}\backslash [\tilde{a}\tilde{b}\tilde{d},\tilde{a}\tilde{c}\tilde{d}]_{\mathcal{R}_g}/\tilde{d} $ and the division inertia groups 
$ \tilde{a}\backslash [\tilde{a}\tilde{b}\tilde{d},\tilde{a}\tilde{b}\tilde{d}]_{\mathcal{R}_g}/\tilde{d}$ and  
$\tilde{a}\backslash  [\tilde{a}\tilde{c}\tilde{d},\tilde{a}\tilde{c}\tilde{d}]_{\mathcal{R}_g}/\tilde{d}$ 
do not depend on the choices of the liftings $\tilde a$ and $\tilde d$, but on the non-cancellative tuple $(a,b,c,d)$ and the liftings $\tilde b$ and $\tilde c$.
\end{cor}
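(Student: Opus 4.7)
The plan is to derive this corollary directly from part 2 of Proposition \ref{LocalToDvisionInertia}, by specializing the general change-of-liftings isomorphism to the situation in which only $\tilde{a}$ and $\tilde{d}$ are allowed to move. Given two choices of liftings $(\tilde{a}_1,\tilde{b},\tilde{c},\tilde{d}_1)$ and $(\tilde{a}_2,\tilde{b},\tilde{c},\tilde{d}_2)$ of the non-cancellative tuple $(a,b,c,d)$ that share the same middle liftings $\tilde{b}$ and $\tilde{c}$, I would apply Proposition \ref{LocalToDvisionInertia} part 2 with $\tilde{b}_1=\tilde{b}_2=\tilde{b}$ and $\tilde{c}_1=\tilde{c}_2=\tilde{c}$.

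The bijections \eqref{eq:R_g-invariance2} and \eqref{eq:InertiaDvisions} supplied by the proposition depend on auxiliary choices $[B]\in[\tilde{b}_1,\tilde{b}_2]_{\mathcal{R}_g}$ and $[C]\in[\tilde{c}_1,\tilde{c}_2]_{\mathcal{R}_g}$. In the present situation these are the groups $[\tilde{b},\tilde{b}]_{\mathcal{R}_g}$ and $[\tilde{c},\tilde{c}]_{\mathcal{R}_g}$, and clause a) of the Construction of $\mathcal{R}_g$-set guarantees that $1_{\tilde{b}}$ and $1_{\tilde{c}}$ lie in them, so I may make the trivial choices $[B]=1_{\tilde{b}}$ and $[C]=1_{\tilde{c}}$. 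With this choice, each of the three conjugation formulas $[H]\mapsto[B]^{-1}\circ[H]\circ[C]$, $[H]\mapsto[B]^{-1}\circ[H]\circ[B]$ and $[H]\mapsto[C]^{-1}\circ[H]\circ[C]$ collapses to the identity map on $[\tilde{b},\tilde{c}]$, $[\tilde{b},\tilde{b}]$ and $[\tilde{c},\tilde{c}]$ respectively, by Proposition \ref{PropA} b). Consequently the bijection \eqref{eq:R_g-invariance2} between $\tilde{a}_1\backslash[\tilde{a}_1\tilde{b}\tilde{d}_1,\tilde{a}_1\tilde{c}\tilde{d}_1]_{\mathcal{R}_g}/\tilde{d}_1$ and $\tilde{a}_2\backslash[\tilde{a}_2\tilde{b}\tilde{d}_2,\tilde{a}_2\tilde{c}\tilde{d}_2]_{\mathcal{R}_g}/\tilde{d}_2$ becomes a literal equality of subsets of $[\tilde{b},\tilde{c}]$, and analogously the division inertia groups coincide as subgroups of $[\tilde{b},\tilde{b}]$ and $[\tilde{c},\tilde{c}]$.

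There is no real obstacle here; the corollary is essentially a reading of Proposition \ref{LocalToDvisionInertia} part 2 in the degenerate case where the connecting $\mathcal{R}_g$-classes on the $\tilde{b}$- and $\tilde{c}$-sides can be taken trivial. The only point one must pause on is that the identity loops really do belong to the relevant $\mathcal{R}_g$-sets, and this is ensured by the very definition of the $\mathcal{R}_g$-system (property a) of the construction). Once that is noted, the argument is immediate, and gives the asserted independence of the division $\mathcal{R}_g$-set and the two division inertia groups from the liftings $\tilde{a}$ and $\tilde{d}$ of $a$ and $d$.
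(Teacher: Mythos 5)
Your proposal is correct and follows the route the paper intends: the corollary is stated without proof precisely because it is the specialization of Proposition \ref{LocalToDvisionInertia} part 2 to fixed $\tilde{b},\tilde{c}$, and your choice $[B]=1_{\tilde{b}}$, $[C]=1_{\tilde{c}}$ (legitimate by clause a) of the construction of $\mathcal{R}_g$-sets) makes the conjugation bijections collapse to identities, giving literal equality of the division sets and division inertia groups. Nothing further is needed.
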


\smallskip
According to the corollary, we shall denote the division $\mathcal{R}_g$-set by
\begin{equation}
\label{eq:RelativePi2}
\Pi((a,b,c,d),\tilde{b},\tilde{c}):=\tilde{a}\backslash [\tilde{a}\tilde{b}\tilde{d},\tilde{a}\tilde{c}\tilde{d}]_{\mathcal{R}_g}/\tilde{d}
\end{equation}
as a subset of $[\tilde b,\tilde c]$, and the division inertia groups by
\begin{equation}
\label{eq:DvisionInertia}
\begin{array}{c}
G((a,b,d),\tilde{b}):=\tilde{a}\backslash [\tilde{a}\tilde{b}\tilde{d},\tilde{a}\tilde{b}\tilde{d}]_{\mathcal{R}_g}/\tilde{d}\\
G((a,c,d),\tilde{c}):=\tilde{a}\backslash [\tilde{a}\tilde{c}\tilde{d},\tilde{a}\tilde{c}\tilde{d}]_{\mathcal{R}_g}/\tilde{d}
\end{array}
\end{equation}
as subgroups of $[\tilde b,\tilde b]$ and  $[\tilde c,\tilde c]$ containing  $[\tilde b,\tilde b]_{\mathcal{R}_g}$ and  $[\tilde c,\tilde c]_{\mathcal{R}_g}$, respectively.
The embedded images $\pi G((a,b,d),\tilde{b})$ and $\pi G((a,b,d),\tilde{c})$, 
as subgroups of $\pi_2(W,*)$, are isomorphic by the adjoint action of $\tilde{b}(\tilde{c})^{-1}$.

\medskip
\begin{rem} 
\label{Reduction}
Consider a non-cancellative tuple $(a,b,c,d)\in N(b,c)$ for the monoid $A^+(W,L,\mathcal{R})$. Then, we can introduce a new system of fundamental relations by $\mathcal{R}':=\mathcal{R}\cup \{\tilde {b}=\tilde{c}\}$ and its geometric lifting by $\mathcal{R}_g':= \mathcal{R}_g\cup \{\Pi((a,b,c,d),\tilde{b},\tilde{c})^{\pm1}\}$.
\end{rem}


\bigskip
\noindent 
{\bf 4.2.3 \quad  $\pi_2$-classes of $W$ associated with twin NC-tuples.}   

\smallskip
\noindent
We arrive at the goal of the construction of the $\pi_2$-classes. See the final Definition \ref{FinalDefn} and the final statements in Proposition  \ref{FinalProp}. 

\smallskip
We start with recalling the concept of a twin.

\smallskip
\begin{defn}
\label{Twin}
Let us call a pair of  non-cancellative tuples 
\begin{equation}
\label{eq:Twin}
 (a,b,c,d),(a',b,c,d') \in NC(b,c)
 \end{equation}
a {\it twin} \footnote
{The term ``twin" appeared in the study of elliptic Artin monoid \cite{S4} where non-cancellative tuples appear always as pairs or quadruples. Since we do not know yet how to characterized the phenomena, at present early stage of the study, we call a pair of non-canncellative tuples a twin if they are  defined over the same kernel.  On the other hand, we shall see that the examples of line arrangements \eqref{eq:3linesTwins} and \eqref{eq:A3Noncancell} give  expected consequences on $\pi_2$ in \S4.3. Therefore, we expect a reasonable definition of a twin, covering both interesting cases: elliptic discriminant complement and line arrangement complement, should be found in future.}
when they are defined over  the same kernel $(b,c)\in A^+(W)^2$. 
\end{defn}

For the given twin \eqref{eq:Twin},  
 as in \S4.2.2,  we choose any liftings $\tilde{b}$ and $\tilde c$ in $\mathcal{F}^+$ of $b$ and $c$. Recall that we obtained two division $\mathcal{R}_g$-sets
$$
\Pi((a,b,c,d),\tilde{b},\tilde{c}) \text{\quad  and \quad} \Pi((a',b,c,d'),\tilde{b},\tilde{c})
$$
in  $[\tilde{b}, \tilde{c}]$, on which the pair of division inertia groups 
$$
(G(a,b,d),\tilde{b}), G(a,c,d),\tilde{c}))  \text{\quad and \quad} (G(a',b,d'),\tilde{b}), G(a',c,d'),\tilde{c}))
$$ 
are acting doubly simple tansitively. 

Our goal is to construct some $\pi_2$-classes of $W$ out of these data. One step before that,  we first construct some classes in $[\tilde{b}\tilde{b}]$ and $[\tilde{c},\tilde{c}]$.
%
  Set 
\begin{equation}
\label{eq:TildePi2}
\begin{array} {cl}
& \Pi((a,b,c,d),(a',b,c,d'), \tilde{b}) \\
\!\!\! :=\!\!\! &\!\!\!
\{ [H]\! \circ\!  [H']^{-1}\mid  [H] \! \in \! \Pi((a,b,c,d),\tilde{b},\tilde{c})  \text{ and }    [H']\! \in \!\Pi((a',b,c,d'),\tilde{b},\tilde{c}) \}\!\!\!\!\!\!\!\!\!\!\!\!\!\!\!\!\!\!\!\!\!
\end{array}
\end{equation}

\begin{equation}
\label{eq:TildeInertiab}
\begin{array} {cl}
& G((a,b,d),(a',b,d'), \tilde{b}) \\
\!\!\! :=\!\!\! &\!\!\!
\{ [G_b]\circ [G_b']^{-1}\mid  [G_b]\in G((a,b,d),\tilde{b}) \ \text{ and } \   [G_b']\in G((a',b,d'),\tilde{b}) \} \!\!\!\!\!\!\!\!\!\!\!\!\!
\end{array}
\end{equation}

\begin{equation}
\label{eq:TildeInertiac}
\begin{array} {cl}
& G((a,c,d),(a',c,d'), \tilde{c})
 \\
\!\!\! :=\!\!\! &\!\!\!
\{ [G_c]\circ [G_c']^{-1}\mid  [G_c]\in G((a,c,d),\tilde{c}) \ \text{ and } \   [G_c']\in G((a',c,d'),\tilde{c}) \}\!\!\!\!\!\!\!\!\!\!\!\!
\end{array}
\end{equation}
\begin{prop}
1.   $G((a,b,d),(a',b,d'), \tilde{b})$ and $G((a,c,d),(a',c,d'), \tilde{c})$ are subgroups of $[\tilde{b},\tilde{b}]$ and $[\tilde{c},\tilde{c}]$, respectively, which are isomorphic by an adjoint action of any element of $ \Pi((a',b,c,d'), \tilde{b},\tilde{c})$, where the isomorphism does not depend on a choice. 

2.  $ \Pi((a,b,c,d),(a',b,c,d'), \tilde{b})$ is a subset of $[\tilde{b},\tilde{b}]$, which form a sigle residue class with respect to the subgroup $G((a,b,d),(a',b,d'), \tilde{b})$ of $[\tilde{b},\tilde{b}]$.
\end{prop}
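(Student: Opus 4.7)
My plan is to reduce both statements to two facts already established: that $[\tilde b,\tilde b]$ and $[\tilde c,\tilde c]$ are abelian groups canonically isomorphic to $\pi_2(W,*)$ by Proposition \ref{PropB}, and that the division inertia groups act simply transitively on the division $\mathcal{R}_g$-sets by Proposition \ref{LocalToDvisionInertia} (1). First I would verify the subgroup claim in (1). Since $G((a,b,d),\tilde b)$ and $G((a',b,d'),\tilde b)$ both sit inside the abelian group $[\tilde b,\tilde b]$, the set $\{[G_b]\circ [G_b']^{-1}\}$ coincides with the internal product $G((a,b,d),\tilde b)\cdot G((a',b,d'),\tilde b)$, because the extra inversion merely reparametrizes a subgroup. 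In an abelian group, the product of two subgroups is a subgroup, so $G((a,b,d),(a',b,d'),\tilde b)$ is a subgroup of $[\tilde b,\tilde b]$, and the same argument proves $G((a,c,d),(a',c,d'),\tilde c)\subset[\tilde c,\tilde c]$.

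For the isomorphism, I would introduce, for any $[K]\in [\tilde b,\tilde c]$, the adjoint $Ad([K])=[K]^{-1}\circ(-)\circ[K]\colon[\tilde b,\tilde b]\to[\tilde c,\tilde c]$. Two different choices of $[K]$ give adjoints that differ by conjugation inside the abelian group $[\tilde b,\tilde b]$, hence coincide; call the common map $\varphi$. Choosing $[K]=[H]\in\Pi((a,b,c,d),\tilde b,\tilde c)$ and applying the simple transitive left and right actions of Proposition \ref{LocalToDvisionInertia} (1) to rewrite $[G_b]\circ [H]$ as $[H]\circ [G_c]$ shows that $\varphi(G((a,b,d),\tilde b))=G((a,c,d),\tilde c)$; the same procedure with $[K]=[H']\in\Pi((a',b,c,d'),\tilde b,\tilde c)$ yields $\varphi(G((a',b,d'),\tilde b))=G((a',c,d'),\tilde c)$. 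Hence $\varphi$ restricts to an isomorphism between the product subgroups, independent of any choice.

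For statement (2), I would fix base points $[H_0]\in\Pi((a,b,c,d),\tilde b,\tilde c)$ and $[H_0']\in\Pi((a',b,c,d'),\tilde b,\tilde c)$. By Proposition \ref{LocalToDvisionInertia} (1) every element of these sets admits a unique expression $[H]=[G_b]\circ[H_0]$ and $[H']=[G_b']\circ[H_0']$, and a direct computation exploiting the abelianness of $[\tilde b,\tilde b]$ to commute $[G_b']^{-1}$ past $[H_0]\circ[H_0']^{-1}$ yields
\[
 [H]\circ[H']^{-1}\ =\ \bigl([G_b]\circ[G_b']^{-1}\bigr)\circ\bigl([H_0]\circ[H_0']^{-1}\bigr).
\]
This displays $\Pi((a,b,c,d),(a',b,c,d'),\tilde b)$ as a single coset of $G((a,b,d),(a',b,d'),\tilde b)$ in $[\tilde b,\tilde b]$, with representative $[H_0]\circ[H_0']^{-1}$; altering the base points merely multiplies the representative by an element of the subgroup, so the coset itself is intrinsic.

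The delicate point will be this commutativity step: the ambient groupoid $\{[x,y]\}$ is non-abelian, so I must justify that every factor being transported across the expression already lies in the vertex group $[\tilde b,\tilde b]$, which is abelian by Proposition \ref{PropB}. Once this is in place, everything else is routine manipulation inside an abelian group once the simple transitive actions are in hand.
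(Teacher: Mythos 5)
Your proposal is correct and follows essentially the same route as the paper: both rest on the abelianness of the vertex groups $[\tilde b,\tilde b]\simeq[\tilde c,\tilde c]\simeq\pi_2(W,*)$ (Proposition \ref{PropB}) and on the simple transitive left/right actions of the division inertia groups on the division $\mathcal{R}_g$-sets (Proposition \ref{LocalToDvisionInertia}), the adjoint transport $[G_b]\circ[H]=[H]\circ[G_c]$ and the coset computation being exactly the paper's manipulations. The only differences are cosmetic streamlinings — you make the subgroup claim explicit as a product of subgroups in an abelian group, observe that the adjoint map is independent of any choice in $[\tilde b,\tilde c]$, and prove part 2 using only the left-action parametrization so that it does not invoke part 1 — none of which changes the substance of the argument.
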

\begin{proof}
1.  Let us consider the adjoint action of $[H'] \in \Pi((a',b,c,d'),\tilde{b},\tilde{c})$ on
 $[G_c]\circ[G_c']^{-1} \in G((a,c,d),(a',c,d'), \tilde{c})$ for $[G_c]\in G(a,c,d),\tilde{c})$ and  $[G_c'] \!\in\! G(a',c,d'),\tilde{c})$. Using, any auxiliary $[H]\! \in\! \Pi((a,b,c,d),\tilde{b},\tilde{c})$, we have\!\! 
{\footnotesize
$$
\begin{array}{rcl}
[H'] \circ ([G_c]\circ [G_c']^{-1})\circ [H'] ^{-1}\!\!&\!\!=\!\!&\!\![H'] \circ [G_c]\circ [H]^{-1}\circ [H] \circ[G_c']^{-1}\circ [H'] ^{-1}\\
\!\!&\!\!=\!\!&\!\!([H'] \circ [G_c]\circ [H]^{-1})\circ ([H']\circ [G_c']\circ [H] ^{-1})^{-1}\\
\end{array}
\vspace{-0.1cm}
$$
}
where the RHS belongs to $G((a,b,d),(a',b,d'), \tilde{b})$ since we have $[H'] \circ [G_c]\circ [H]^{-1}\in G((a,b,d),\tilde{b})$ and $[H']\circ [G_c']\circ [H] ^{-1}\in G(a',b,d'),\tilde{b})$.  It is invertible due to the existence of inverse conjugation by $[H']^{-1}$. The uniqueness of the isomorphism follows since $[\tilde{b},\tilde{b}]\simeq[\tilde{c},\tilde{c}]$ is abelian. 
 
\smallskip
2.  Next, let us examine the effect of the transitive actions of the elements of division inertia groups $[G_b] \in G(a,b,d),\tilde{b}), [G_c]\in G(a,c,d),\tilde{c})$ and $[G_b' ] \in G(a',b,d'),\tilde{b})$, $[G_c'] \in G(a',c,d'),\tilde{c})$ on 
$
[H]\in \Pi((a,b,c,d),\tilde{b},\tilde{c})$ and $[H'] \in \Pi((a',b,c,d'),\tilde{b},\tilde{c})
$.
In each division $\mathcal{R}_g$-set, we obtain $[G_b\circ H\circ G_c]$ and  $[G_b'\!\circ\! H'\!\circ\! G_c']$ so that the final class in $ \Pi(\! (a,b,c,d),(a',b,c,d'),\tilde{b})$ is\!\!
$$
\begin{array}{rcl}
&& [G_b\circ H\circ Gc2]\circ [G_b'\circ H'\circ G_c']^{-1} \\
&=&[G_b\circ H\circ G_c\circ G_c'^{-1}\circ (H') ^{-1} \circ (G_b' )^{-1}] \\
&=&[G_b\circ (H \circ( H') ^{-1})   \circ ( H' \circ G_c\circ (G_c')^{-1}\circ (H') ^{-1})  \circ (G_b' )^{-1}] 
\end{array}
$$
where factors $G_b$, $H \circ( H') ^{-1}$,  $H' \! \circ \! G_c \! \circ \! (G_c')^{-1} \! \circ \! (H') ^{-1}$ and $(G_b' )^{-1}$ belong to $[\tilde{b},\tilde{b}]$ and are commutative (recall Proposition \ref{PropB}).  So,  this is equal to
$$
([H] \circ[H'] ^{-1}) \   \circ \  ([G_b] \circ [G_b']^{-1}) \ \circ \ \bigl( [H'] \circ ([G_c]\circ [G_c']^{-1})\circ [H'] ^{-1} \bigl)
$$ 
where,  obvious by definition, the first factor belongs to the $\pi_2$ classes $ \Pi((a,b,c,d),(a',b,c,d'), \tilde{b},\tilde{c})$, the second factor belongs to the inertia group $G((a,b,d),(a',b,d'), \tilde{b})$, and the third factor is an $[H']$-adjoint image of an element of $G((a,c,d),(a',c,d'), \tilde{c})$, which, due to above 1.,  belongs to the  inertia group $G((a,b,d),(a',b,d'), \tilde{b})$. 
\end{proof}

\begin{defn}
\label{FinalDefn}
Recall the immersion $\pi$ \eqref{eq:immersion} of $[\tilde{b},\tilde{b}]$ into $\pi_2(W,*)$. 
 
 Set   
  \vspace{-0.1cm}
\begin{equation}
\label{eq:FinalPi2}
 \Pi((a,b,c,d),(a',b,c,d')) :=\pi( \Pi((a,b,c,d),(a',b,c,d'), \tilde{b}) )
 \vspace{-0.1cm}
 \end{equation}
and call it the {\it  $\Pi$-classes associated with the twin} \eqref{eq:Twin}, and
\begin{equation}
\label{eq:FinalInertia}
\begin{array}{rcl}
 G((a,b,c,d),(a',b,c,d'))&:=&\pi(G((a,b,d),(a',b,d'), \tilde{b}))\\
 &\ =& \pi(G((a,c,d),(a',c,d'), \tilde{c}))
\end{array}
 \vspace{-0.1cm}
 \end{equation}
 \vspace{-0.1cm}
 and call it the {\it global inertia group associated with the twin} \eqref{eq:Twin}.
 \end{defn}

\medskip
\begin{prop}
\label{FinalProp}
1.  The  $\Pi$-class \eqref{eq:FinalPi2} and the global inertia group  \eqref{eq:FinalInertia} associated with the twin \eqref{eq:Twin} do not depend on the   liftings $\tilde{b},\tilde{c}$. 

2.  The  $\Pi$-class \eqref{eq:FinalPi2} form a single residue class with respect to the global inertia group  \eqref{eq:FinalInertia} in the group $\pi_2(W,*)$. 
\end{prop}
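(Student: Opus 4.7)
The plan is to dispatch Part 2 as an immediate consequence of the unnumbered proposition preceding Definition \ref{FinalDefn}, and to prove Part 1 by comparing two choices of liftings via part 2 of Proposition \ref{LocalToDvisionInertia} and then trivialising the resulting conjugation through the observation that the $\pi_1$-action on $\pi_2$ factors through $\iota_*$.

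For Part 2, the proposition immediately preceding Definition \ref{FinalDefn} already shows that $\Pi((a,b,c,d),(a',b,c,d'),\tilde{b})$ is a single residue class modulo $G((a,b,d),(a',b,d'),\tilde{b})$ inside the group $[\tilde{b},\tilde{b}]$. Since $\pi\colon [\tilde{b},\tilde{b}]\to\pi_2(W,*)$ is a group isomorphism by Proposition \ref{PropB}, applying $\pi$ transports this residue-class structure and yields the desired conclusion in $\pi_2(W,*)$.

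For Part 1, given two pairs $(\tilde{b}_i,\tilde{c}_i)$ ($i=1,2$) of liftings of $(b,c)\in A^+(W)^2$, Proposition \ref{GeometricMonoid} ensures that $[\tilde{b}_1,\tilde{b}_2]_{\mathcal{R}_g}$ and $[\tilde{c}_1,\tilde{c}_2]_{\mathcal{R}_g}$ are non-empty; choose any $[B]$ and $[C]$ in them. Applying part 2 of Proposition \ref{LocalToDvisionInertia} simultaneously to $(a,b,c,d)$ and $(a',b,c,d')$, each $[H]$ and $[H']$ in the respective division $\mathcal{R}_g$-sets is sent to $[B]^{-1}\circ[H]\circ[C]$ and $[B]^{-1}\circ[H']\circ[C]$, so that
\[
[H]\circ[H']^{-1}\ \longmapsto\ [B]^{-1}\circ([H]\circ[H']^{-1})\circ[B],
\]
the copies of $[C]$ cancelling; the inertia ratio $[G_b]\circ[G_b']^{-1}$ transforms by the same $[B]$-conjugation, while the $[\tilde{c},\tilde{c}]$-version $[G_c]\circ[G_c']^{-1}$ is conjugated by $[C]$.

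Finally, I would push the $[B]$-conjugation through $\pi$ using \eqref{eq:twoProducts}: it becomes conjugation in $\pi_2(W,W^{(1)},*)$ of the normal subgroup $\pi_2(W,*)=\ker\partial_1$ by the element $\pi([B])$ with $\partial_1\pi([B])=\tilde{b}_1\tilde{b}_2^{-1}\in\pi_1(W^{(1)},*)$. This restriction coincides with the natural $\pi_1(W^{(1)},*)$-action on $\pi_2(W,*)$, which factors through $\iota_*\colon\pi_1(W^{(1)},*)\to\pi_1(W,*)=A(W)$; since $\tilde{b}_1$ and $\tilde{b}_2$ both lift $b\in A^+(W)$, we have $\iota_*(\tilde{b}_1\tilde{b}_2^{-1})=bb^{-1}=e$, so the action is trivial. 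The analogous vanishing of $\iota_*(\tilde{c}_1\tilde{c}_2^{-1})$ disposes of the $[C]$-conjugation acting on $\pi G((a,c,d),(a',c,d'),\tilde{c})$. The main obstacle will be invoking cleanly the topological fact that the $\pi_1(W^{(1)})$-action on $\pi_2(W)$ factors through $\iota_*$; granted this, both statements collapse to the bookkeeping furnished by Propositions \ref{PropB} and \ref{LocalToDvisionInertia}.
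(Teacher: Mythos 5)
Your proposal is correct, and its skeleton is the paper's: Part 2 is indeed immediate from the unnumbered proposition preceding Definition \ref{FinalDefn} together with Proposition \ref{PropB} (the paper's proof only treats Part 1 and leaves Part 2 implicit), and for Part 1 the paper likewise picks $[B]\in[\tilde b_1,\tilde b_2]_{\mathcal{R}_g}$, $[C]\in[\tilde c_1,\tilde c_2]_{\mathcal{R}_g}$, transports via Proposition \ref{LocalToDvisionInertia}.2 so that the $[C]$-factors cancel, and reduces everything (the inertia-group case being parallel) to showing that $[H_1]\circ[H_1']^{-1}$ and $[B]^{-1}\circ\bigl([H_1]\circ[H_1']^{-1}\bigr)\circ[B]$ have the same image under $\pi$. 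Where you genuinely differ is in how that final cancellation is justified. The paper argues pictorially (Figure 4.4): after gluing the two square domains along the $\tilde b_i$-edges, the $B^{-1}$- and $B$-portions (and the $C\circ C^{-1}$-portions) overlap with opposite orientations, hence retract in $W$ onto the edges and surround no $\pi_2$-class. You instead invoke the crossed-module identity that conjugation in $\pi_2(W,W^{(1)},*)$ by $\pi([B])$ acts on $\ker\partial_1\simeq\pi_2(W,*)$ through $\partial_1\pi([B])=\tilde b_1\tilde b_2^{-1}$, together with the fact that the $\pi_1(W^{(1)},*)$-action on $\pi_2(W,*)$ factors through $\iota_*$, which kills $\tilde b_1\tilde b_2^{-1}$ (and $\tilde c_1\tilde c_2^{-1}$) because both liftings represent the same element of $A(W)$. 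Your route is sharper in that it isolates the precise reason the argument works, and it replaces an informal retraction picture by citable statements; but those two statements (Whitehead's Peiffer identity and the naturality of the $\pi_1$-action in the long exact sequence of the pair) are nowhere established in the paper, which deliberately keeps its system $\{[x,y]\}$ outside the crossed-module formalism, so you would need to state them explicitly with a reference such as \cite{W}. Conversely, the paper's figure argument buys self-containedness within its own calculus at the cost of a somewhat informal verification.
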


\begin{proof}
1. The proof for the inertia group is parallel to that for the set of $\pi_2$-classes. So we proceed only for the set of $\pi_2$-classes.

 Let  us consider two liftings $\tilde{b}_1, \tilde{b}_2$ of $b$ and  $\tilde{c}_1, \tilde{c}_2$ of $c$, and choose any $[B]\in [\tilde{b}_1,\tilde{b}_2]_{\mathcal{R}_g}$ and $[C]\in [\tilde{c}_1,\tilde{c}_2]_{\mathcal{R}_g}$, respectively.  Recall the bijections
$$
\begin{array}{rclrcl}
\Pi((a_1,b_1,c_1,d_1),\tilde{b_1},\tilde{c_1}) \!\!\!&\!\! \simeq\!\! &\!\!\!
\Pi((a_2,b_2,c_2,d_2),\tilde{b_2},\tilde{c_2}),\! &
[H_1] \!\!\!&\!\! \mapsto \!\!&\!\!\!  [B]^{-1}\!\!\circ\! [H_1] \!\circ \! [C] \\
\Pi((a'_1,b_1,c_1,d'_1),\tilde{b_1},\tilde{c_1}) \!\!\!&\!\!\simeq \!\! &\!\!\!
\Pi((a'_2,b_2,c_2,d'_2),\tilde{b_2},\tilde{c_2}), \!&
[H'_1] \!\!\!& \!\! \mapsto \!\! & \!\!\!  [B]^{-1} \!\! \circ \! [H'_1] \! \circ \! [C] 
\end{array}
 \vspace{-0.05cm}
$$
which induces a bijection:
$$
\begin{array}{ccc}
\ [H_1] \circ [H'_1]^{-1}  &  \mapsto  &  [B]^{-1}\!\!\circ\! [H_1] \!\circ \! [C]\! \circ \! ( [B]^{-1} \!\! \circ \! [H'_1] \! \circ \! [C])^{-1}
 \end{array}
 \vspace{-0.05cm}
 $$
 between the two $\pi_2$-classes defined by the use of liftings $(\tilde{b}_i,\tilde{c}_i)$ for $i=1,2$.\!\! 

 As embedded in $\pi_2(W,*)$, we identify the bottom and the top edges $\tilde{b}_1$ of the defining square domain of $[H_1] \circ [H'_1]^{-1}$ and  also the edge $\tilde{b}_2$ of the defining square domain of $[B]^{-1}\!\circ\! [H_1] \!\circ\! [C]\!\circ\!([B]^{-1}\!\circ\! [H'_1] \! \circ \! [C])^{-1}$ so that their conceptual images are  as  follows. 
 
 \vspace{0.3cm}
\hspace{-0.2cm} 
\includegraphics[width=0.79 \textwidth]{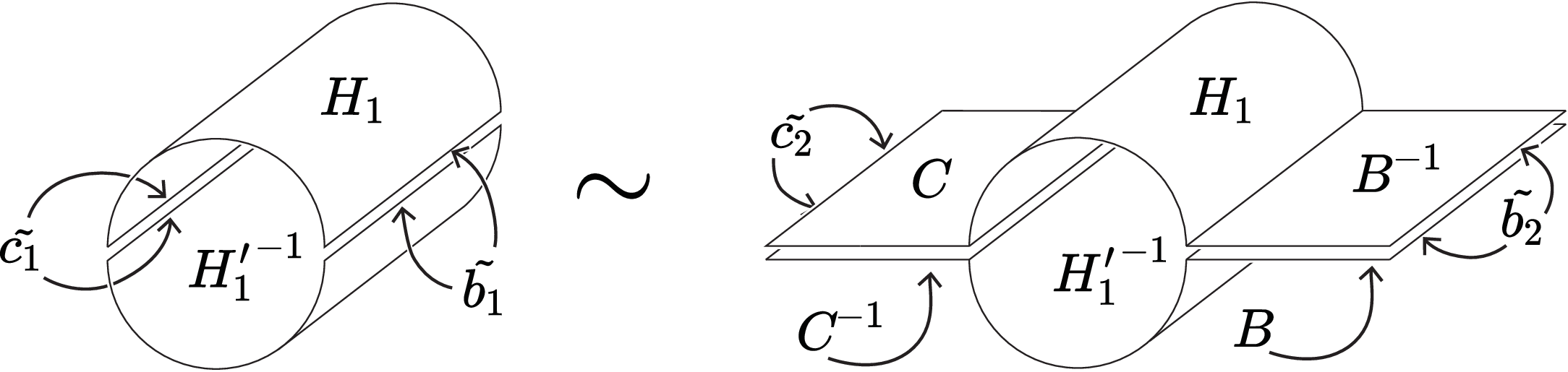}

\vspace{0.2cm}
\centerline{\quad Figure 4.4: \ $\pi_2$-classes in $\Pi(W:(a,b,c,d),(a',b,c,d'))$}

\medskip
 
 One sees that,  in the right hand figure, the images of   $B^{-1}\circ B$ and $C\circ C^{-1}$ are overlapping so that they do not surround any $\pi_2$-class in $W$ and are retractible in $W$ to the edges $\tilde{b}_1$ and $\tilde{c}_1$, respectively. Thus, the LHS and RHS of the figures give arise to the same elements in $\pi_2(W,*)$.
  \end{proof}
 
\noindent
{\bf 4.2.4   \quad Summary  of the construction of the $\Pi$-class:} 

Let us summarize the construction of $\Pi$-class in a semi-positively presented space $W$ associated with a twin of non-cancellative tuples.

\noindent
(1) We start with a twin of  non-cancellative tuples (see \eqref{eq:ncp} and \eqref{eq:Twin}). 

\noindent
(2) Choose any liftings $\tilde{a},\tilde{b}, \tilde{c},\tilde{d},\tilde{a}',\tilde{d}' \in \mathcal{F}^+$. The non-cancellative relations \eqref{eq:NCRelation} produce the non-trivial $\mathcal{R}_g$-sets $[\tilde{a}\tilde{b}\tilde{d},\tilde{a}\tilde{c}\tilde{d}]_{\mathcal{R}_g}$ and $[\tilde{a'}\tilde{b}\tilde{d'},\tilde{a'}\tilde{c}\tilde{d'}]_{\mathcal{R}_g}$ equipped  with transitive actions of local inertia groups \eqref{eq:LocalInertia}. 

\noindent
(3) Apply the division \eqref{eq:Relative2class} to the $\mathcal{R}_g$-sets in (2), and obtain the division $\mathcal{R}_g$-sets $\Pi((a,b,c,d),\tilde{b},\tilde{c})$, $\Pi((a',b,c,d'),\tilde{b},\tilde{c}) \subset [\tilde{b},\tilde{c}]$ \eqref{eq:RelativePi2}, on which the division inertia groups \eqref{eq:InertiaDivision} are acting transitively. 

\noindent
(4) Finally,  we consider the differences \eqref{eq:TildePi2} of the two division $\mathcal{R}_g$-sets, and project them into $\pi_2(W,*)$ by the immersion $\pi$ \eqref{eq:immersion}. The image set, denoted by $\Pi((a,b,c,d),(a',b,c,d'))$ \eqref{eq:FinalPi2} and called {\it  $\Pi$-class of the twin}, is independent of the liftings $\tilde{a},\tilde{b}, \tilde{c},\tilde{d},\tilde{a'},\tilde{d'}$. It forms a single residue class of the total inertia subgroup $ G((a,b,c,d),(a',b,c,d'))$ in $\pi_2(W,*)$ \eqref{eq:FinalInertia}. 

\noindent
(5) In summary of (1)$\sim$(4),  we have constructed a surjective map from the $\mathcal{R}_g$-sets of non-cancellative relations onto the $\Pi$-class:
\begin{equation}
\label{eq:Pi2Morphism}
[\tilde{a}\tilde{b}\tilde{d},\tilde{a}\tilde{c}\tilde{d}]_{\mathcal{R}_g} 
\times
[\tilde{a}'\tilde{b}\tilde{d}',\tilde{a}'\tilde{c}\tilde{d}']_{\mathcal{R}_g}
\longrightarrow 
\Pi((a,b,c,d),(a',b,c,d')),
\end{equation}
which is equivariant with the morphisms from local to global  inertia groups 
\begin{equation}
\vspace{-0.1cm}
\begin{array}{cccc}
&[\tilde{a}\tilde{b}\tilde{d},  \tilde{a}\tilde{b}\tilde{d}]_{\mathcal{R}_g}\ \  \times  \ \ [ \tilde{a}\tilde{c}\tilde{d} ,  \tilde{a}\tilde{c}\tilde{d}]_{\mathcal{R}_g} \!\!&\! \to \!&\! G((a,b,c,d),(a',b,c,d')) \!\! \\
&[\tilde{a}'\tilde{b}\tilde{d}',\tilde{a}'\tilde{b}\tilde{d}']_{\mathcal{R}_g}\! \times \! [\tilde{a}'\tilde{c}\tilde{d}',\tilde{a}'\tilde{c}\tilde{d}']_{\mathcal{R}_g}  
\!\!&\! \to \!&\! G((a,b,c,d),(a',b,c,d'))\!\!
\end{array}
\end{equation}
(see Proposition \ref{LocalToDvisionInertia}, 1. and \eqref{eq:DvisionInertia}, \eqref{eq:TildeInertiab}, \eqref{eq:TildeInertiac}).\!\!

\smallskip
\noindent
{\bf Warnings:}

1.  The obtained $\pi_2$-classes of $W$ may not automatically be non-trivial, as we see  by the ``absurd examples" given in \S2.2, 5. Oppositely, the  
existence of $\pi_2$-classes of $W$ does not automatically imply the existence of non-cancellative tuples of the  monoid giving the classes (e.g.\ $W\!=\! S^2\vee S^1$).\!\!

2.  Construction of the $\Pi$-class heavily depends on a choice of  geometric fundamental relations. The present paper does not suggest how we should choose them and what they should be.  As a particular, but motivating, example, we ask the following conjecture.

\smallskip
\noindent
{\bf Conjecture 4.13.}  Elliptic Artin braid relations associated with an elliptic root system (see \cite{S3,S-S}) admits geometric liftings of fundamental relations such that (i) all inertia groups are trivial and (ii) $\Pi$-classes associated with the twins of non-cancellative tuples given in \cite{S4} are non-trivial. 

\medskip

On the other hand, some data of the $\Pi$-class seem to be invariant of the choice the geoemtric fundamental relations (see Remark 14).  We ask when the  projection image of $\Pi$-class in 
$ \pi_2(W, *) \otimes (\mathbb{Z}A(W)/\text{augumentation ideal})$
is invariant  by a change of the choice of geometric fundamental relations? 

3. 
As in Remark \ref{Reduction},  we may kill non-cancellative tuples by adding more ``fundamental" relations to get $\mathcal{R}'$ so that the new semi-positive presentation cannot capture the $\pi_2$-classes. However, the (local) inertia group w.r.t.\ $\mathcal{R}_g'$ is larger than that w.r.t.\ $\mathcal{R}_g$ (if $[x,x]_{\mathcal{R}_g} \not\sim [y,y]_{\mathcal{R}_g}$ by an element of $\Pi((a,b,c,d),\tilde{b},\tilde{c})$) , and 
 futher more, by choosing $\mathcal{R}_g'':=\mathcal{R}_g'\cup \{\Pi((a',b,c,d'),\tilde{b},\tilde{c})^{\pm1}\}$, the new inertia group  contains the $\pi_2$-classes \eqref{eq:FinalPi2}. This is not the ``desired direction" of the change of the fundamental relations (recall \S3.2.2 {\bf Question}). This implies the good starting data $(W,L,\mathcal{R})$ is essentially important.

\smallskip
We shall see in \S4.3 that these warning are resolved in good examples.\!\!


\subsection{\normalsize Example of Hattori $\pi_2$-class} 
We analyze Example 4 (a) and (b) of Yoshinaga's presentation \cite{Y2} of the fundamental groups of   line arrangement complements explained in \S2.2. 

In \S4.3.1, we apply to Example (a) the  construction of the $\Pi$-class given in \S4.2.3.   It resolves positively the warnings given at the end of \S4.2.3.  That is,  we determine the $\Pi$-classes of Example (a) and observe that they coincide with the Hattori's second homotopy class \cite{H,H-K}.  

In \S4.3.2, we examine  Example (b), where the second homotopy group is trivial.  We conjecture that there exists no reduced twin of non-cancellative tuples in accordance with the vanishing of the second homotopy group.\!\!

\medskip
\noindent
{\bf 4.3.1  \quad  Example (a) } (an arrangement of three generic lines in $\mathbb{C}^2$)

Recall \eqref{eq:3linesMonoid} that the monoid $A^+(M(\mathcal{A}))$ for Example (a) of three generic lines in $\mathbb{C}^2$ is presented by 3 homogeneous relations:
\vspace{-0.1cm}
 \begin{equation}
 \label{eq:algABC}
 \gamma_1\gamma_2\gamma_3  \overset{A}{\sim}  
\gamma_1\gamma_3\gamma_2, 
\quad 
\gamma_1\gamma_2\gamma_3  \overset{B}{\sim}  
\gamma_3\gamma_1\gamma_2, 
\quad
\gamma_1\gamma_2\gamma_3  \overset{C}{\sim}  
\gamma_2\gamma_3\gamma_1 
\end{equation}
Recall also that three twins of non-cancellative tuples of the monoid  are listed  in \eqref{eq:3linesTwins}. Our goal is to describe the $\Pi$-classes associated with the three twins, and to compare them with  $\pi_2$-classes of $M(\mathcal{A})$. The results are formulated in {\bf Conclusion.}\ at the end of the present paragraph. 


\medskip
\noindent
{\bf Notation.}  In the sequel of the paper, we shall denote the generators $\tilde{\gamma}_1,\!\tilde{\gamma}_2$ and $\tilde{\gamma}_3$ of the free monoid $\mathcal{F}^+\!\simeq\!L^*$ by $1,2$ and $3$.

\medskip
\noindent
{\bf 1.  Geometric fundamental relations}

To start with, we need to lift the algebraic fundamental relations $\mathcal{R}_m$ in \eqref{eq:algABC} to  geometric  fundamental relations (see Definition \ref{GeoFundRel} and \eqref{eq:Equivalence}):\!\!
\begin{equation}
\label{eq:geoABC}
\mathcal{R}_g(123,132)=\{A\}, \ \mathcal{R}_g(123,312)=\{B\}, \ \mathcal{R}_g(123,231)=\{C\}
\end{equation}
where we choose $A\! \in\! [123,132], B\! \in\! [123,312]$ and $C\!\in\![123,321]$ 
by a use
of Hattori's description of the homotopy type of $M(\mathcal{A})$ as follows.\!\!

In \cite{H}, Akio Hattori described the homotopy type of the complement of a union of $k$ numbr of generic hyperplanes (defined over $\mathbb{R}$) in $\mathbb{C}^n$ for $k\!\ge \!n$  as the $n$-th skeleton of the standard cell decomposition of a $k$-dimensional torus.  
 \vspace{-0.05cm}
In case of  3 lines in $\mathbb{C}^2$, we have the following homotopy equivalence:
$$
 M(\mathcal{A})  \quad \sim \quad  X_0:=\tilde{X}_0/\mathbb{Z}^3 
 \vspace{-0.05cm}
$$
 where $\tilde{X}_0:=(\mathbb{R}\!\times\!\mathbb{R}\!\times\!\mathbb{Z}) \cup (\mathbb{R}\!\times\!\mathbb{Z}\times\mathbb{R}) \cup (\mathbb{Z}\times\mathbb{R}\times\mathbb{R})$ is a subset of $\mathbb{R}^3$ on which $\mathbb{Z}^3$ acts freely as the translations. The base point $*\in X_0$  is chosen at the image of $(0,0,0)$.
 Actually, we make an equivalence $M(\mathcal{A})\sim X_0$ in such manner that the three loops $\gamma_1,\gamma_2$ and $\gamma_3$ in $M(\mathcal{A})$ (recall \S2.2 Example 4) are homotopic to the three loops in $X_0$, which are the images of three parallel unit movements from $(p,q,r)\in\mathbb{Z}^3$ to $(p\!+\!1,q,r), (p,q\!+\!1,r)$ and $(p,q,r\!+\!1)$ for $(p,q,r)\in \mathbb{Z}^3$  in $\tilde{X}_0$, respectively.  
 
  \vspace{0.2cm}
\hspace{2cm} 
\includegraphics[width=0.42 \textwidth]{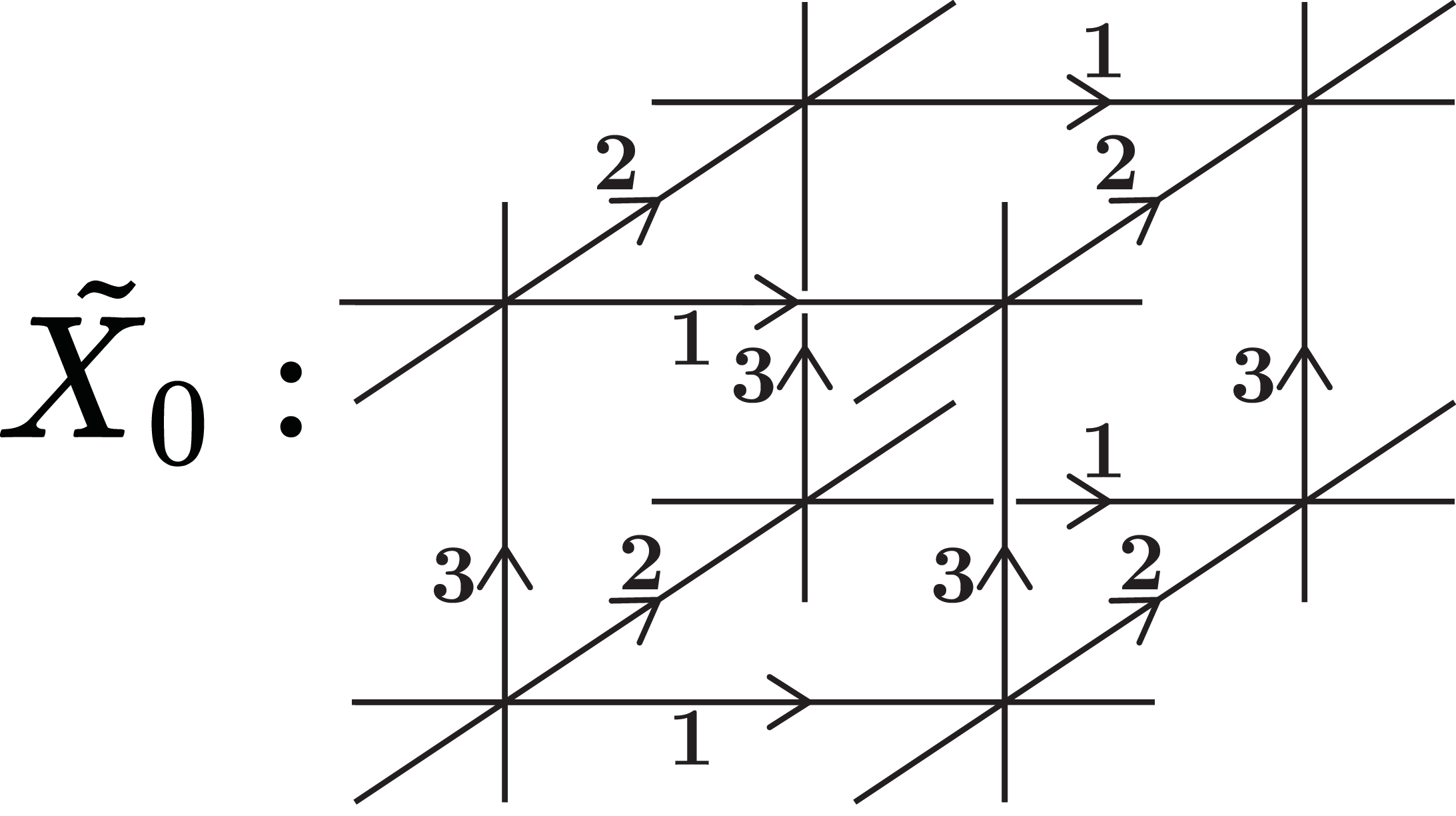}

\vspace{0.2cm}
\centerline{\quad Figure 4.5:  \ $\tilde{X}_0$}

\medskip

 Consequently, $\tilde{X}_0$ is the universal covering of $X_0$, and  $\pi_1(M(\mathcal{A}),*)$ is a rank 3 free abelian group $\mathbb{Z}^3$ generated by $\gamma_1,\gamma_2$ and $\gamma_3$, and $\pi_2(M(\mathcal{A}),*)$ is a rank 1 free module over $\mathbb{Z}\pi_1(M(\mathcal{A}),*)$ generated by the image in $X_0$ of the cube  $\partial([0,1]^3) \! \subset\! \tilde{X}_0$, i.e.\  the {\footnotesize projection of the homotopy class of $(S^2,*) \tilde{\rightarrow} (\partial([0,1]^3),(0,0,0))$} in $\tilde{X}_0$ to $X_0$. 
 Let us call this $\pi_2$-class, {\footnotesize up to the action of $\pi_1(M(\mathcal{A}),*)$},  {\it Hattori  second homotopy class} of $M(\mathcal{A})$.

 We, now, describe the surfaces of the homotopy equivalence maps  $A, B$ and $C$ as follows. We first lift in $\tilde{X}_0$ the boundaries paths $123(132)^{-1}$, $123(312)^{-1}$ and $123(231)^{-1}$ of $A,B$ and $C$. We observe that each path surrounds exactly one ``minimum" surface in  $\tilde{X}_0$ (see Figure 4.6), which we shall call $\tilde{A}, \tilde{B}$ and $\tilde{C}$, respectively. 
 
 \vspace{-0.2cm}
\hspace{-0.2cm} 
\includegraphics[width=0.79 \textwidth]{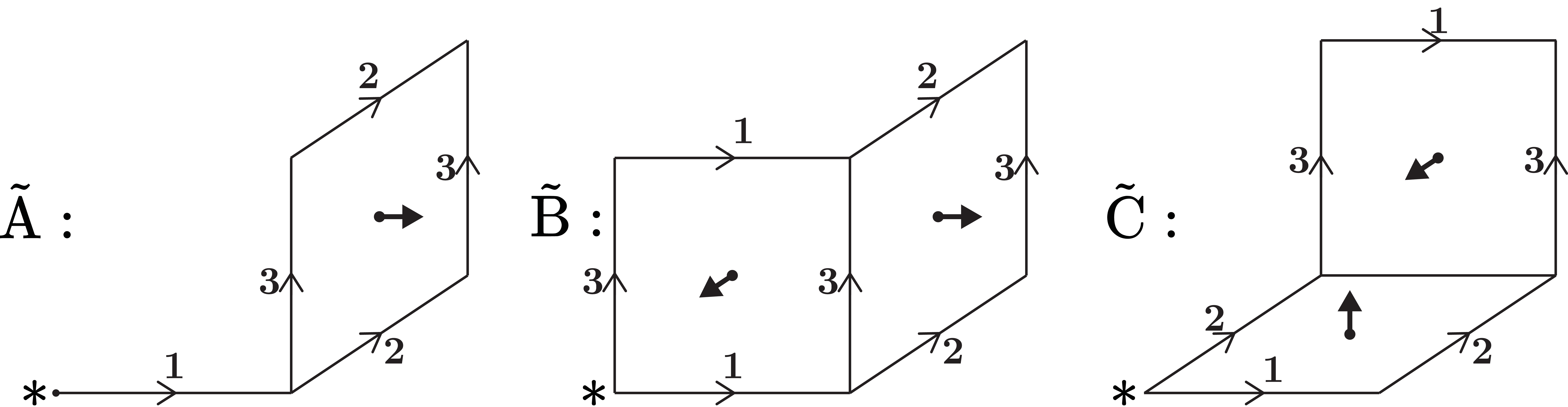}

\vspace{0.1cm}
    \centerline{\quad Figure 4.6:  \ Geometric fundamental relations $\tilde{A}$, $\tilde{B}$ and $\tilde{C}$,}
    \centerline{\small where the orientation of the faces are indicated by gothic arrows}

\medskip
   Then the geometric liftings ${A}, {B}$ and ${C}$ of the the algebraic fundamental relations \eqref{eq:algABC}
    are given as the homotopy class of the projection images in $X_0$ of $\tilde{A}, \tilde{B}$ and $\tilde{C}$, respectively.

 \bigskip
\noindent
{\bf 2. Inertia groups}
 
 \noindent 
 Associated with three twins \eqref{eq:3linesTwins}, we consider the three maps \eqref{eq:Pi2Morphism}:
 \begin{equation}
 \label{eq:3lines2homotopy}
\begin{array}{lrclcl}
& [1(23),1(32)]_{\mathcal{R}_g} \!&\!\!\times \! \!&\! [(23)123,(32)123]_{\mathcal{R}_g} & \to & \pi_2(M(\mathcal{A}),*),\!\!\!\!\!\!\!\! \\
&[(31)2, (13)2]_{\mathcal{R}_g}
\! &\!\! \times \!\! & \!  [123(31), 123 (13)]_{\mathcal{R}_g} & \to & \pi_2(M(\mathcal{A}),*),\!\!\!\!\!\!\!\!\\
& [(12)32, (21) 32]_{\mathcal{R}_g}
\! &\!\! \times \!\! & \! [13(12), 13(21)]_{\mathcal{R}_g}  & \to & \pi_2(M(\mathcal{A}),*),\!\!\!\!\!\!\!\! 
\end{array}
\end{equation}
The images of the maps are the $\Pi$-classes of the twins, which we determine. 

\smallskip
We first show that the domains of the maps in the LHS of \eqref{eq:3lines2homotopy}  are singletons. This is due to the facts:
 
(i) the pairs of the inertia groups  $[123,123]_{\mathcal{R}_g}\times [23123,23123]_{\mathcal{R}_g}$,
$[312,312]_{\mathcal{R}_g}\times [12331,12331]_{\mathcal{R}_g}
$ and $[1232,1232]_{\mathcal{R}_g}\times [1312,1312]_{\mathcal{R}_g}$ act on the domains transitively (recall \S3.2.2  Proposition \ref{InertiaAction}),  

(ii) all these inertia 
groups are trivial groups.
%
%

\medskip
\noindent
{\it Proof of} (ii).  Since the fundamental group of the graph  $G(L^*,\mathcal{R}_m)$ \eqref{eq:MonoidGraph3} based at $x\!\in \! L^* \!\! \simeq \! \mathcal{F}^+$\! surjects onto the inertia group $[x,x]_{\mathcal{R}_g}$   \eqref{eq:SingletonLocalInertia}, it is sufficient to show that the connected components of  $G(L^*,\mathcal{R}_m)$
containing the $\mathcal{R}_m$-equivalent pairs (123,132), (23123, 32123), (312, 132), (12331,12313), (1232, 2132)  and (1312,1321) are trees and contractible. 

 It is achieved by the explicit descriptions of the graph given in the following Figure 4.7, where we shall call a connected component of  $G(L^*,\mathcal{R}_m)$ containing an element $x$ by $G_x$. 
 The base points of the fundamental groups (corresponding to the pairs) are surronded by rectangles. 
  We remark that the sequence of elementary transformations given  in
\eqref{eq:3linesTwin23}, \eqref{eq:3linesTwin31} and \eqref{eq:3linesTwin12} are exactly the short route connecting the two rectangles in the graphs.\!\!

 \vspace{0.5cm}
\hspace{0.5cm} 
\includegraphics[width=0.25 \textwidth]{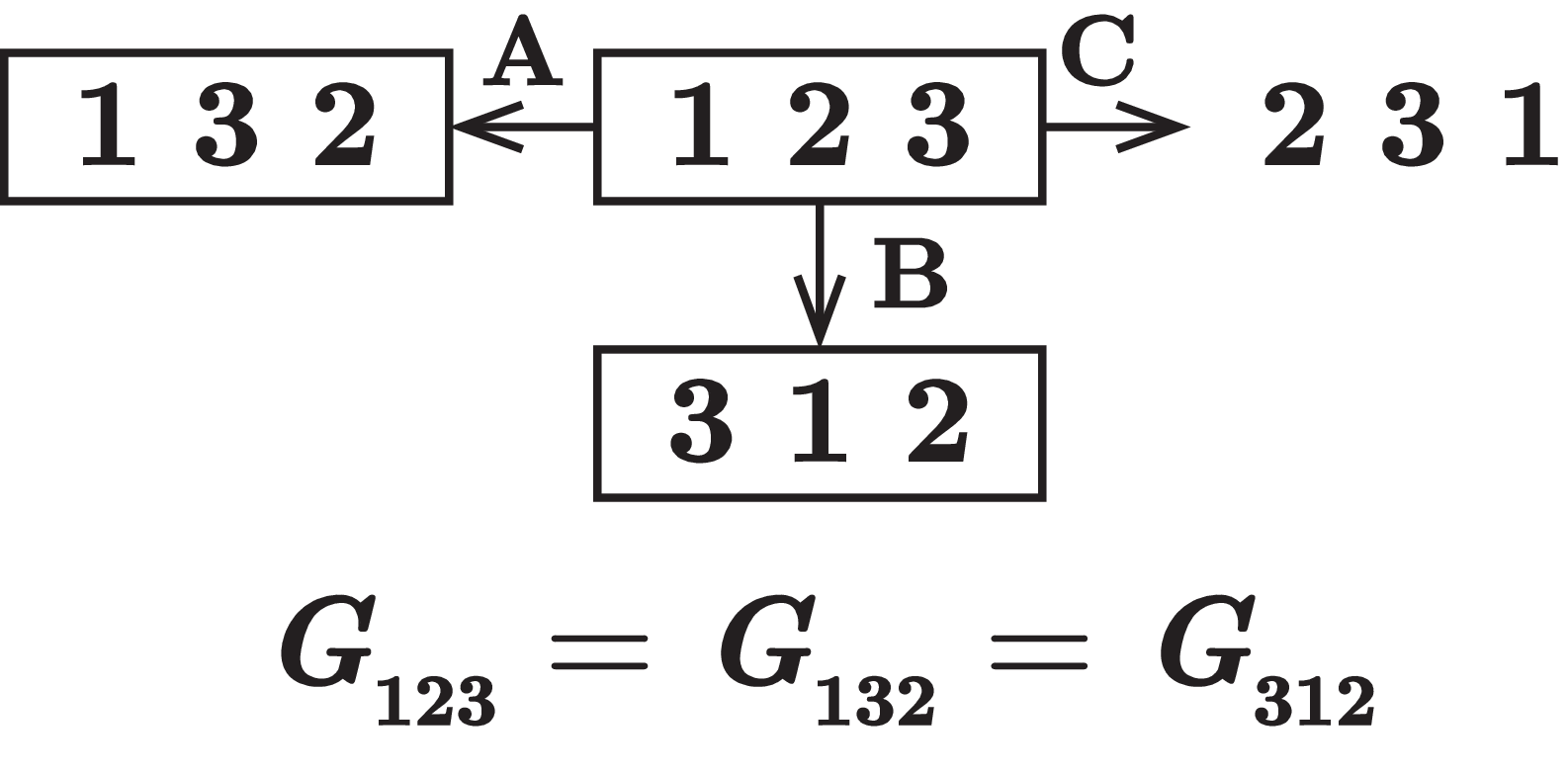}

\vspace{-1.1cm} 
\hspace{4.5cm}
\includegraphics[width=0.4 \textwidth]{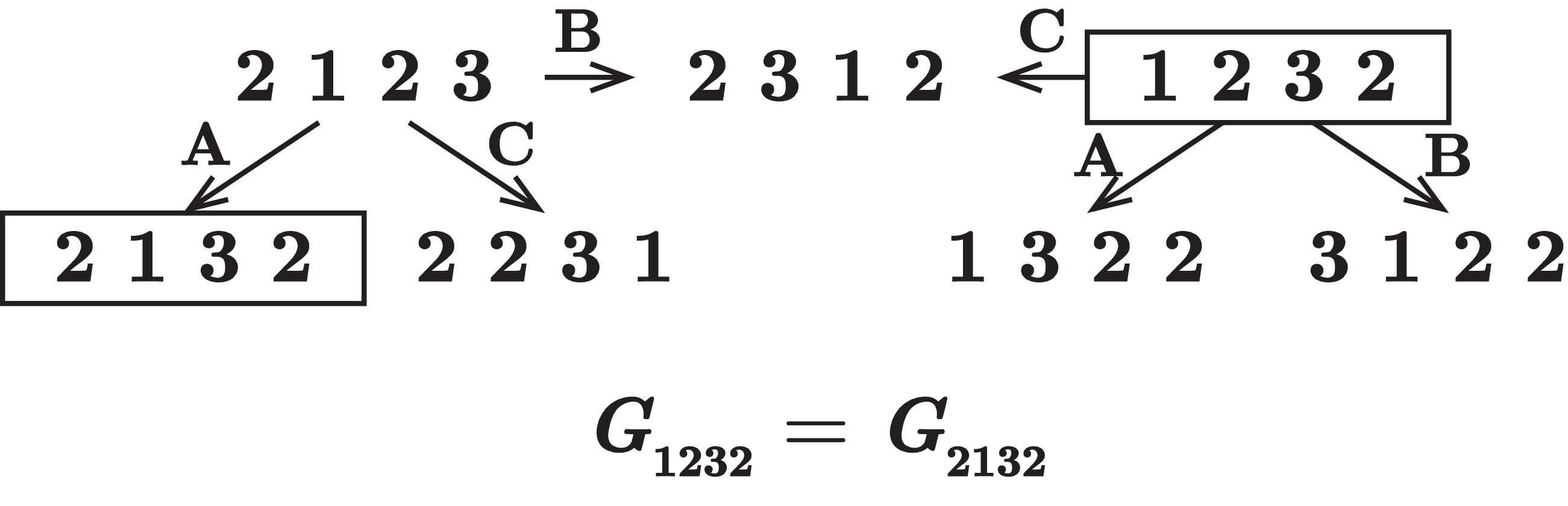}

\vspace{0.3cm}
\hspace{-0.2cm}
\includegraphics[width=0.33 \textwidth]{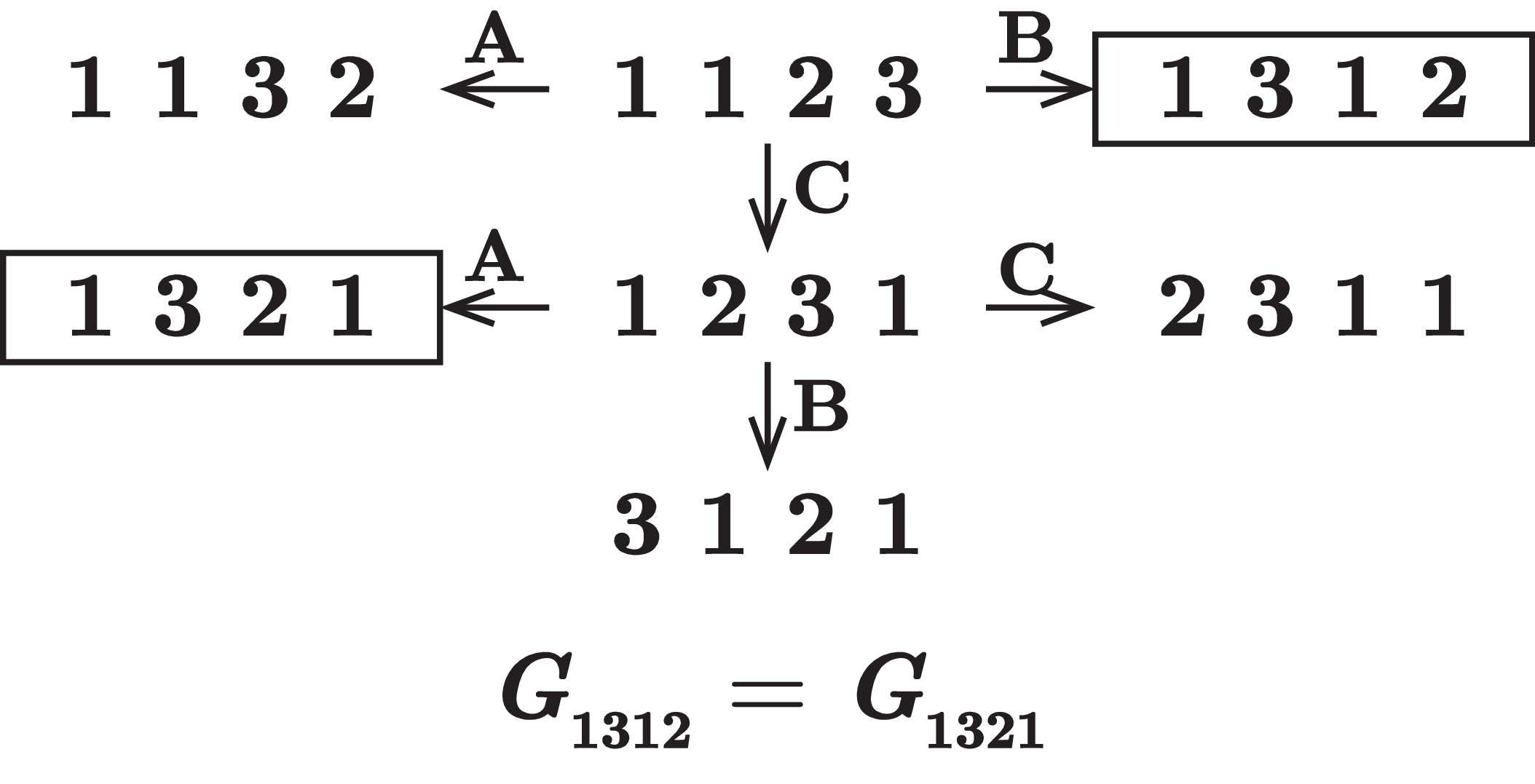}

\vspace{-1.0cm}
\hspace{2.2cm}
\includegraphics[width=0.62 \textwidth]{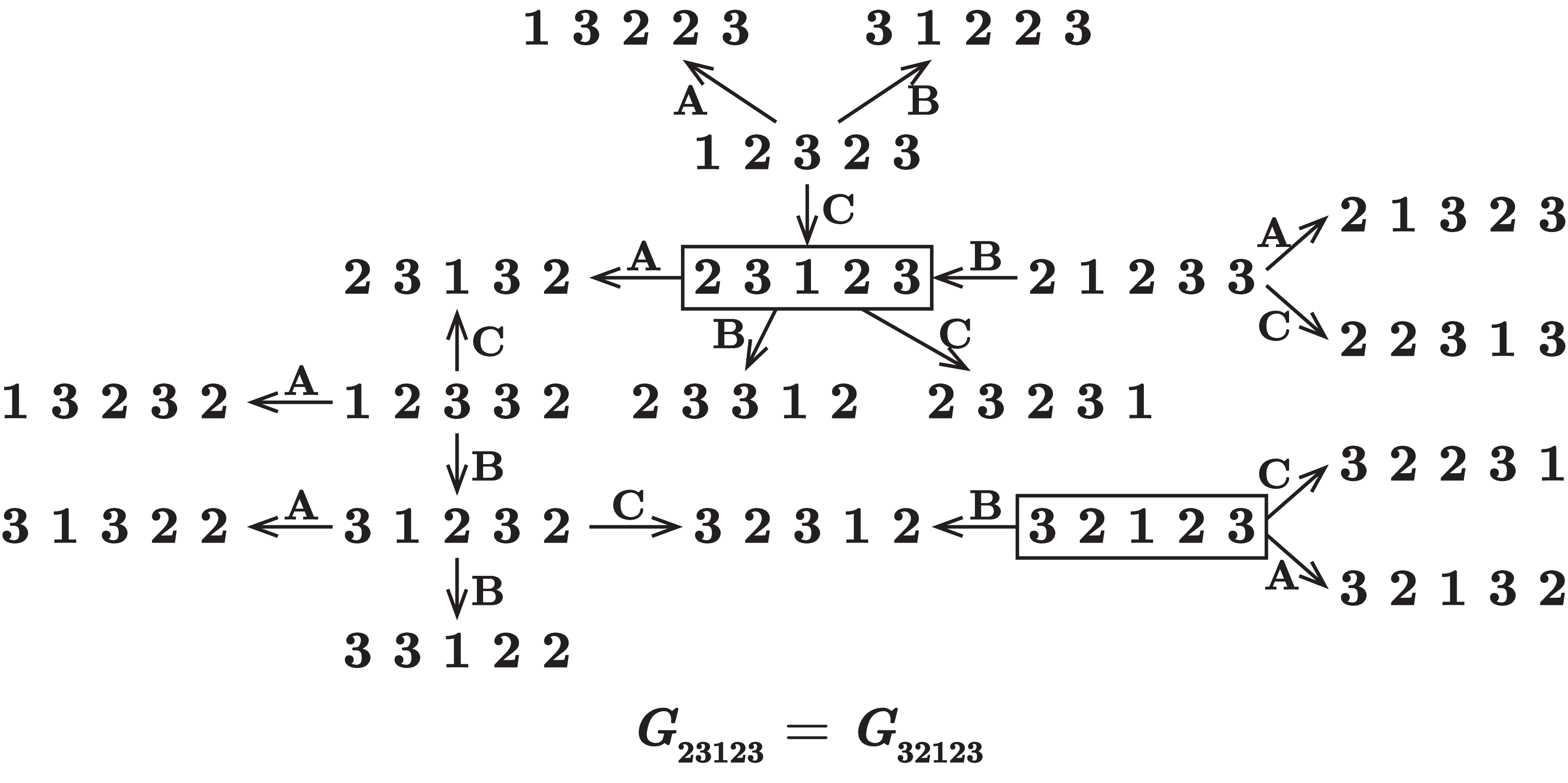}

\vspace{0.3cm}
\hspace{-0.cm}
\includegraphics[width=0.67 \textwidth]{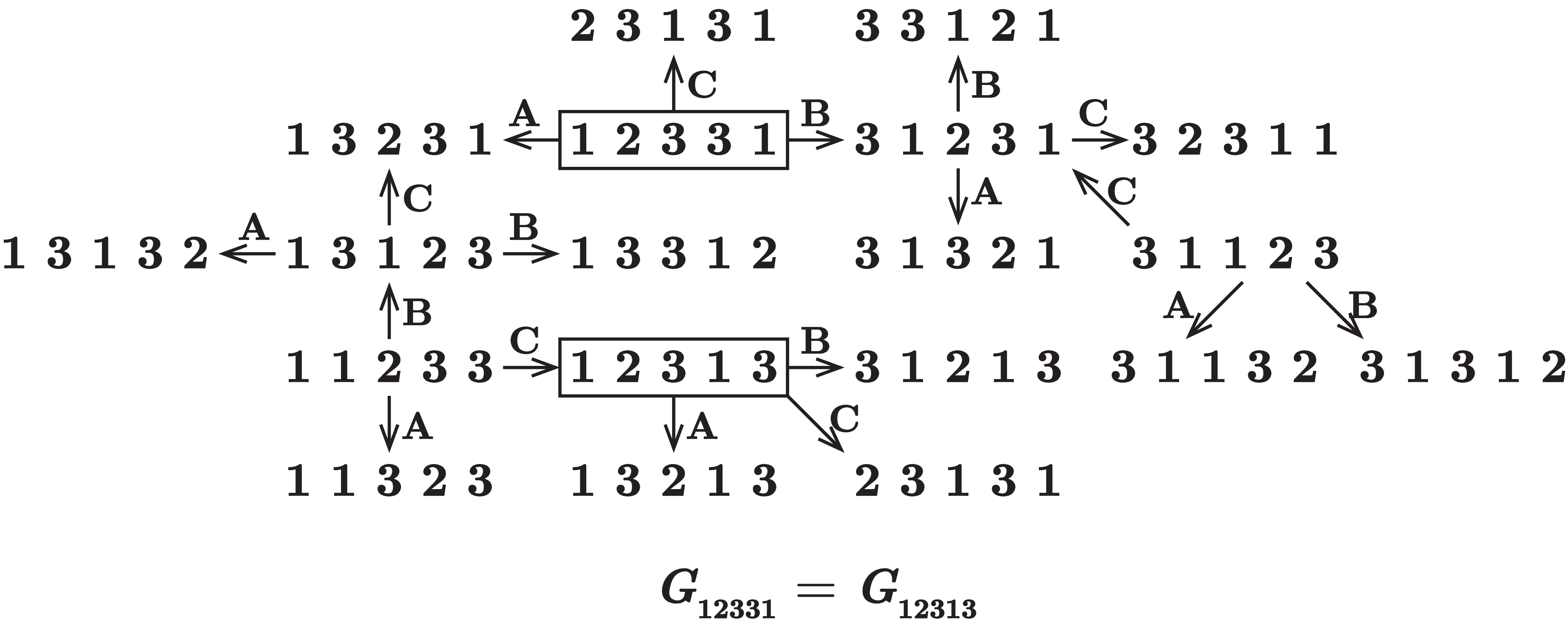}

\vspace{0.1cm}
    \centerline{\quad Figure 4.7:  \ Components of graph $G(L^*,\mathcal{R}_m)$,}
 
\bigskip
\noindent
{\bf 3.  Second homotopy classes associated with three  twins}

\smallskip
\noindent
We next explicitly determine 
 the single element of the image of the maps \eqref{eq:3lines2homotopy},  in three steps. 

\smallskip
\noindent
{\bf Step 1.} As the result of previous {\bf 2.}, let us name the single elements as:

\smallskip

\noindent          
$$
\begin{array}{clcl}
&[T_{23}]\in [1(23),1(32)]_{\mathcal{R}_g}
 & \text{and} &
[T_{23}']\in [(23)123,
(32)123]_{\mathcal{R}_g}, \\
&[T_{31}]\in [(31)2, (13)2]_{\mathcal{R}_g}
& \text{ and} & 
[T_{31}']\in [123(31), 123 (31)]_{\mathcal{R}_g}, \\
&[T_{12}]\in [(12)32, (21) 32]_{\mathcal{R}_g}
& \text{ and} & 
[T_{12}']\in [13(12), 13(21]_{\mathcal{R}_g}, 
\end{array}
$$
whose representing maps $T_{ij}$ are obtained by replacing the elementary transformations $\gamma_1\gamma_2\gamma_3 \!\overset{A}{\sim} \!
\gamma_1\gamma_3\gamma_2$, 
$\gamma_1\gamma_2\gamma_3 \!\! \overset{B}{\sim} \!\!
\gamma_3\gamma_1\gamma_2$,
$\gamma_1\gamma_2\gamma_3 \!\! \overset{C}{\sim} \!\!
\gamma_2\gamma_3\gamma_1$ in the sequences 
\eqref{eq:3linesTwin23}, \eqref{eq:3linesTwin31} and \eqref{eq:3linesTwin12}
 by the maps $A, B,C$ (recall Definition of $\mathcal{R}_g$-sets). 
Next, we apply the $\cdot$-division (recall \eqref{eq:Relative2class}) on each twin $([T_{ij}],[T_{ij}'])$, and obtain three  twins of $\cdot$-division  homotopy equivalences
\vspace{-0.1cm}
$$
\begin{array}{clclcl}
&[H_{23}]:=1\backslash [T_{23}] & and &  [H_{23}']:=[T_{32}']/123  &\in & [23,32],\\
&[H_{23}]:=123\backslash [T_{31}] &and &  [H_{31}']:=[T_{31}']/2 &\in & [31,13],\\
&[H_{12}]:=[T_{12}/32 & and &  [H_{12}']:=13\backslash T_{12}'  &\in & [12,21].
\vspace{-0.1cm}
\end{array}
$$
%
%
The domains of representing maps $H_{ij}$ are drawn in the following figures, where the domain of $T_{ij}$ is the small rectangle inside $H_{ij}$\! (c.f.\ Figure 4.1).\!\!\!

 \vspace{0.cm}
\hspace{-0.5cm} 
\includegraphics[width=0.75\textwidth]{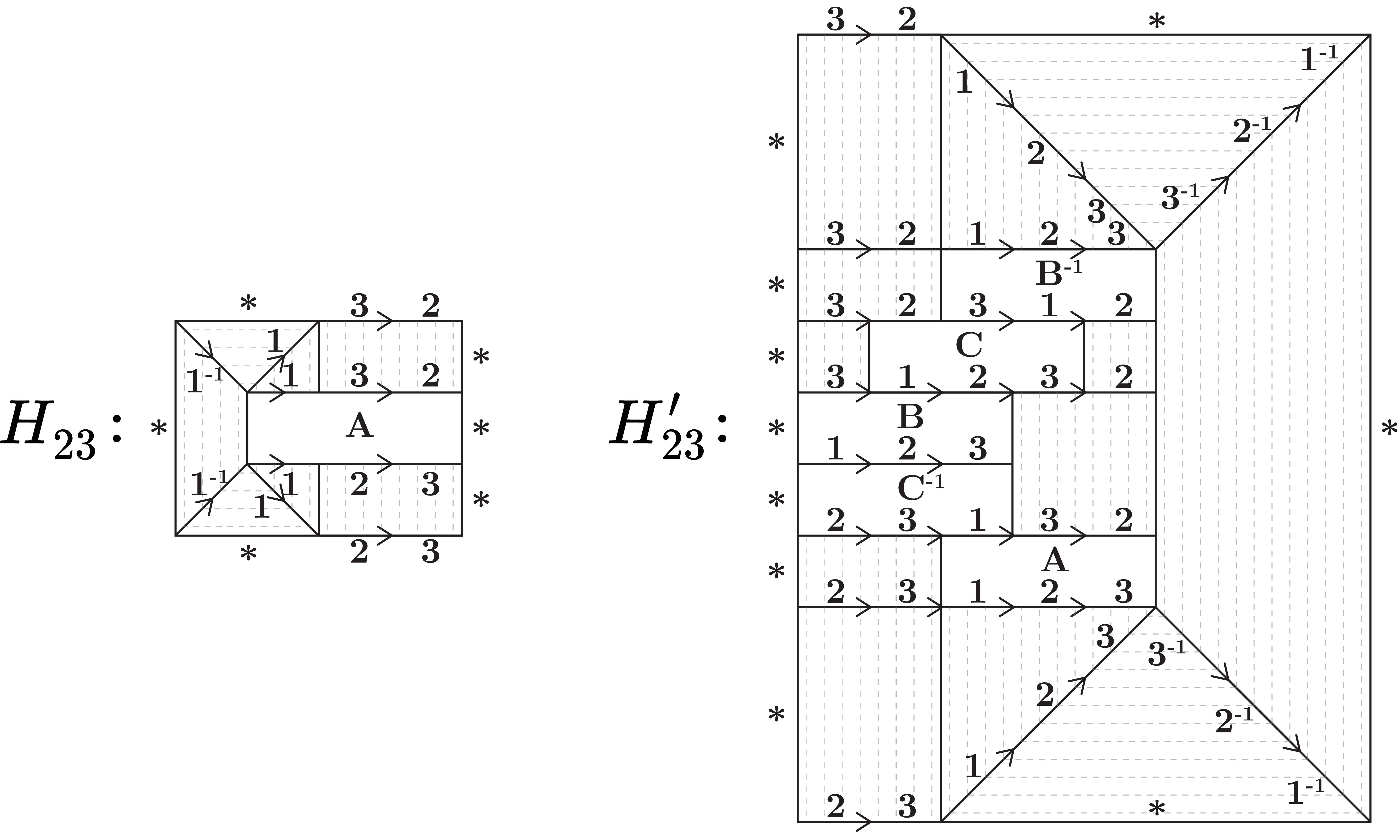}

\vspace{0.1cm}


 \vspace{0.1cm}
\hspace{-0.5cm} 
\includegraphics[width=0.75\textwidth]{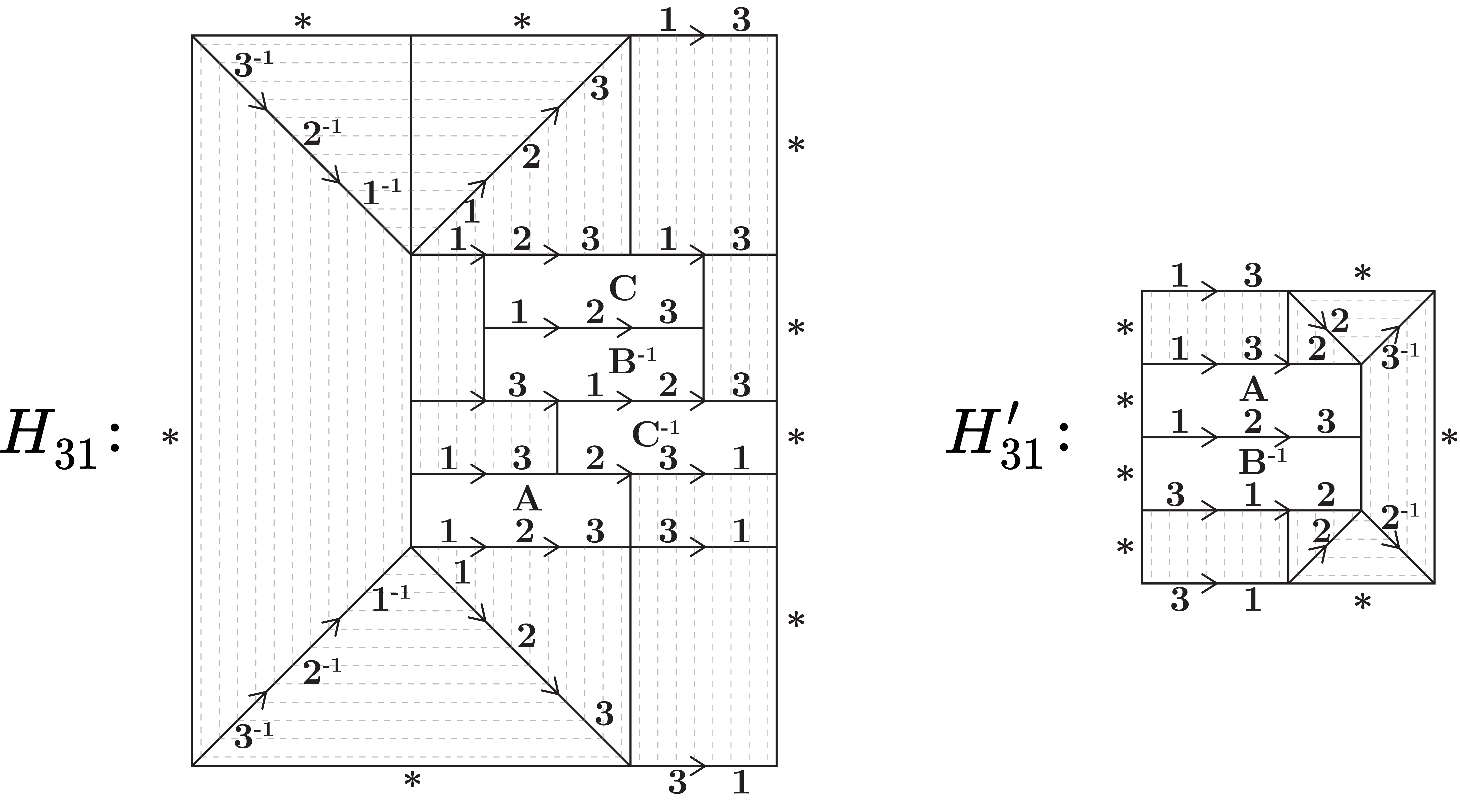}

\vspace{0.1cm}



 \vspace{0.1cm}
\hspace{-0.5cm} 
\includegraphics[width=0.75\textwidth]{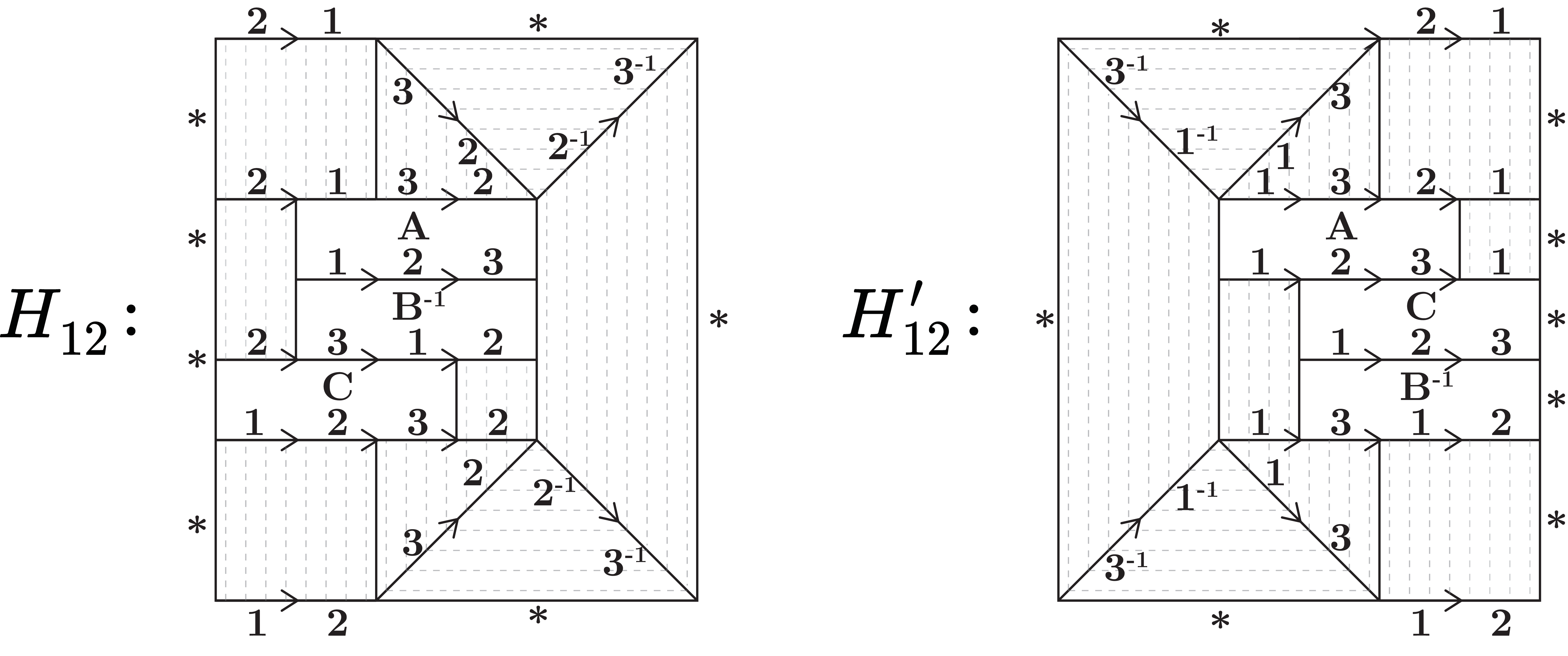}

\vspace{0.2cm}

\centerline{\ Figure 4.8:  Homotopy equivalences   $H_{ij}, H_{ij}' \in [ij, ji]$}

\bigskip
\noindent
{\bf Step 2.}
We consider the $\circ$-difference $[H_{ij}]\circ [H_{ij}']^{-1} \in [ij,ij]$ 
of the pair $[H_{ij}],[H_{ij}'] \in [ij,ji]$  given in Step 1.  As elements in $[ij,ij]$, we apply the immersion  $\pi$ \eqref{eq:immersion} on them, and  obtain elements in $\pi_2(W,*)$ \eqref{eq:FinalPi2}. 
\begin{equation}
\label{eq:3pi2}
\begin{array}{cccl}
\!\!&\pi([H_{23}]\!\circ\![H_{23}']^{-1}\!)\!\!&\!\in\!&\!\! \Pi((\gamma_1,\gamma_2\gamma_3,\gamma_3\gamma_2,1),(1,\gamma_2\gamma_3, \gamma_3\gamma_2,\gamma_1\gamma_2\gamma_3))\!\!\!\!\!\!\!\!\! \\
\!\!&\pi([H_{31}]\! \circ\! [H_{31}']^{-1}\!)
\!\!&\! \in\!&\!\! \Pi((\gamma_1\gamma_2\gamma_3,\gamma_3\gamma_1, \gamma_1\gamma_3,1), (1,\gamma_3\gamma_1, \gamma_1\gamma_3,\gamma_2))\!\!\!\!\!\!\!\!\!   \\
\!\!&\pi([H_{12}]\! \circ\! [H_{12}']^{-1}\!)
\!\!&\!\in\! &\!\! \Pi((\gamma_1\gamma_3,\gamma_1\gamma_2, \gamma_2\gamma_1,1), (1,\gamma_1\gamma_2, \gamma_2\gamma_1,\gamma_3\gamma_2))\!\!\!\!\!\!\!\!\!
\end{array}
\end{equation}
which are the end objects, the $\Pi$-classes, of the present  construction.

We next show that these classes are Hattori generators of $\pi_2(M(\mathcal{A}),*)$. 
For the purpose, we describe the homotopy maps for the classes  \eqref{eq:3pi2} explicitly. We first  re-define the domains of the three $\pi_2$ maps 
as follows, and call them  $\mathcal{D}_{23}, \mathcal{D}_{31}$  and $\mathcal{D}_{12}$.  

\vspace{-0.1cm}
$$
\mathcal{D}_{ij}:= ((\text{Domain of }H_{ij}) \cup  (\text{Domain of }H'_{ij}))/\text{retraction}
\vspace{-0.1cm}
$$
where 

1) For each $ij\in\{23,31,12\}$, take the union of the pair of the domains of definitions for $H_{ij}$ and $H_{ij}'$ in Figure 4.8. 

2) Patch the upper boundary edges $ij$ of the pair of domains, and also patch the lower boundary edges $ji$ of the pair of the domains, where the orientation of the domain for $H_{ij}'$ is reversed. 

3) Retract the patched union along the  dotted lines in Figure 4.8, i.e. retract the areas which are not named  $A,A^{-1},B,B^{-1}$  or $C,C^{-1}$ (here, we recall that $H_{ij}$ and $H_{ij}'$ take constant values along the dotted lines). 

\medskip
As a result, it is marvelous to observe that 

\smallskip
\noindent
{\bf Fact.}\
{\it  The three domains  $\mathcal{D}_{ij}$ ($ij \! \in \! \{23, 31, 12\}$) give the same cell decomposition of a 2-sphere $S^2$ by the 6 cells $A, B, C$ and $A^{-1},B^{-1},C^{-1}$, where we remember that this sphere $S^2$ is the defining domain of the $\Pi$-classes}.\!\!  

\smallskip
We develop the cell-decomposition of the sphere $S^2$ using  one point compactification $S^2\! \simeq\! \mathbb{R}^2 \! \cup \! \{\infty\}$, drawn in the following figure   (details of verification are left to the reader).\!

 \vspace{0.2cm}
\hspace{0.4cm} 
\includegraphics[width=0.65 \textwidth]{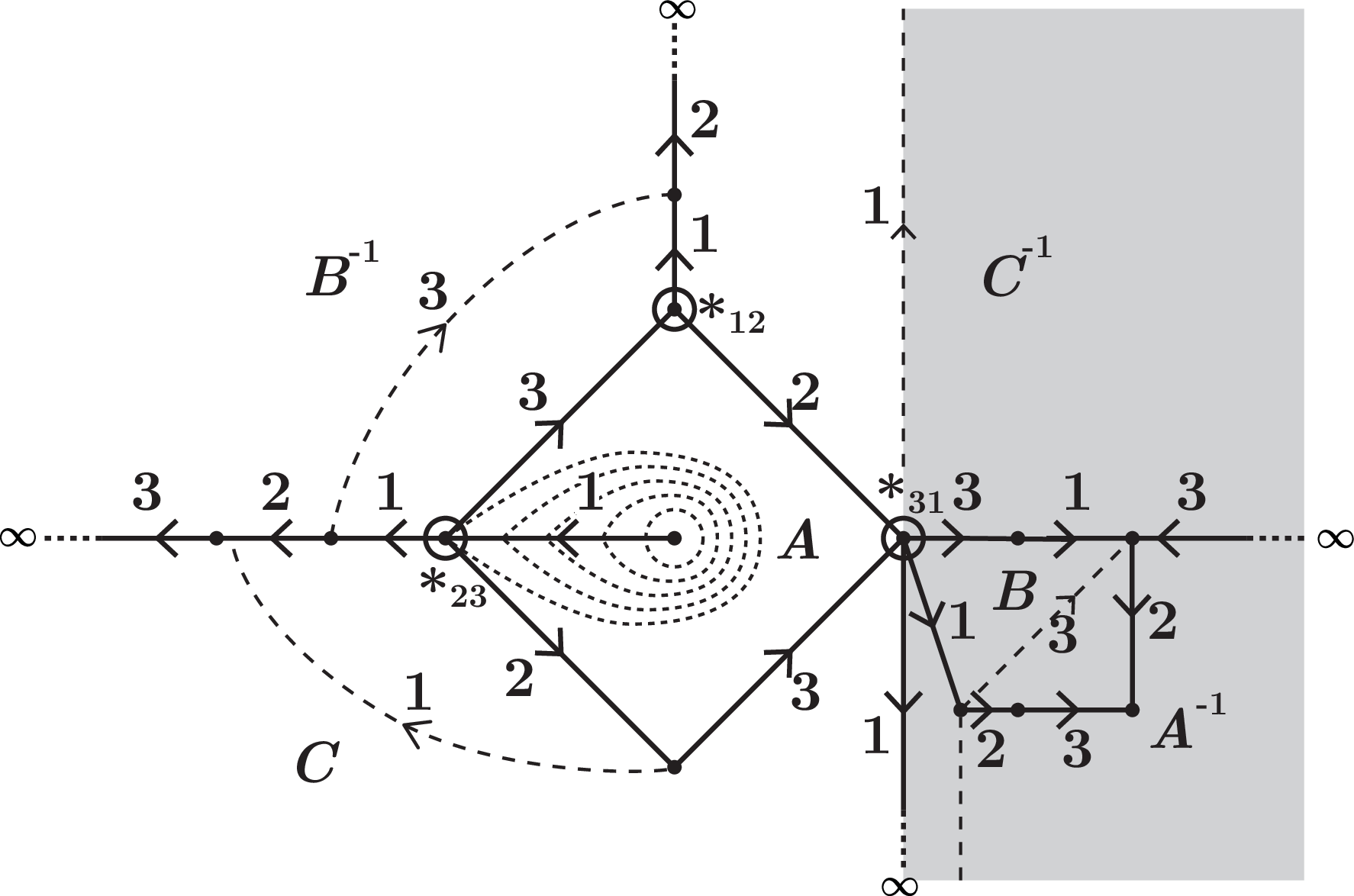}

\vspace{0.2cm}
\centerline{\quad Figure 4.9:  Developed figure of the cell decomposition of $\mathcal{D}_{ij}\simeq S^2$.}

\noindent
The three base points for the three cases $\mathcal{D}_{ij}$ are denoted by $*_{ij}$ for $ij\in \{23,31,12\}$, from where we can easily recover the decomposition of the sphere $S^2$ into the union of the domains of $H_{ij}$ and $(H_{ij}')^{-1}$ by cutting along the loop $iji^{-1}j^{-1}$ starting at $*_{ij}$.  More explanations on the grey zone and doted lines in the figure shall be given in the next Step 3.

\medskip
\noindent
{\bf Step 3.}  We describe the $\pi_2$-maps $\mathcal{D}_{ij} \to  X_0$. Before doing that, we first consider its lifting maps $\mathcal{D}_{ij}\to \tilde{X}_0$ by the use of lifted fundamental relations $\tilde{A},\tilde{B}$ and $\tilde{C}$. The image $\tilde I$ of the lifting map is given by Figure 4.10 up to the action of $\mathbb{Z}^3 (\simeq \pi_1(M(\mathcal{A}),*)$). In Figure 4.10, we normalize the map by fixing the image of the central point of area $A$ in Figure 4.9   to  $(-1,0,0)\in \tilde{X}_0$ at the left end of figure $\tilde I$.

 \vspace{0.15cm}
\hspace{1.7cm} 
\includegraphics[width=0.45 \textwidth]{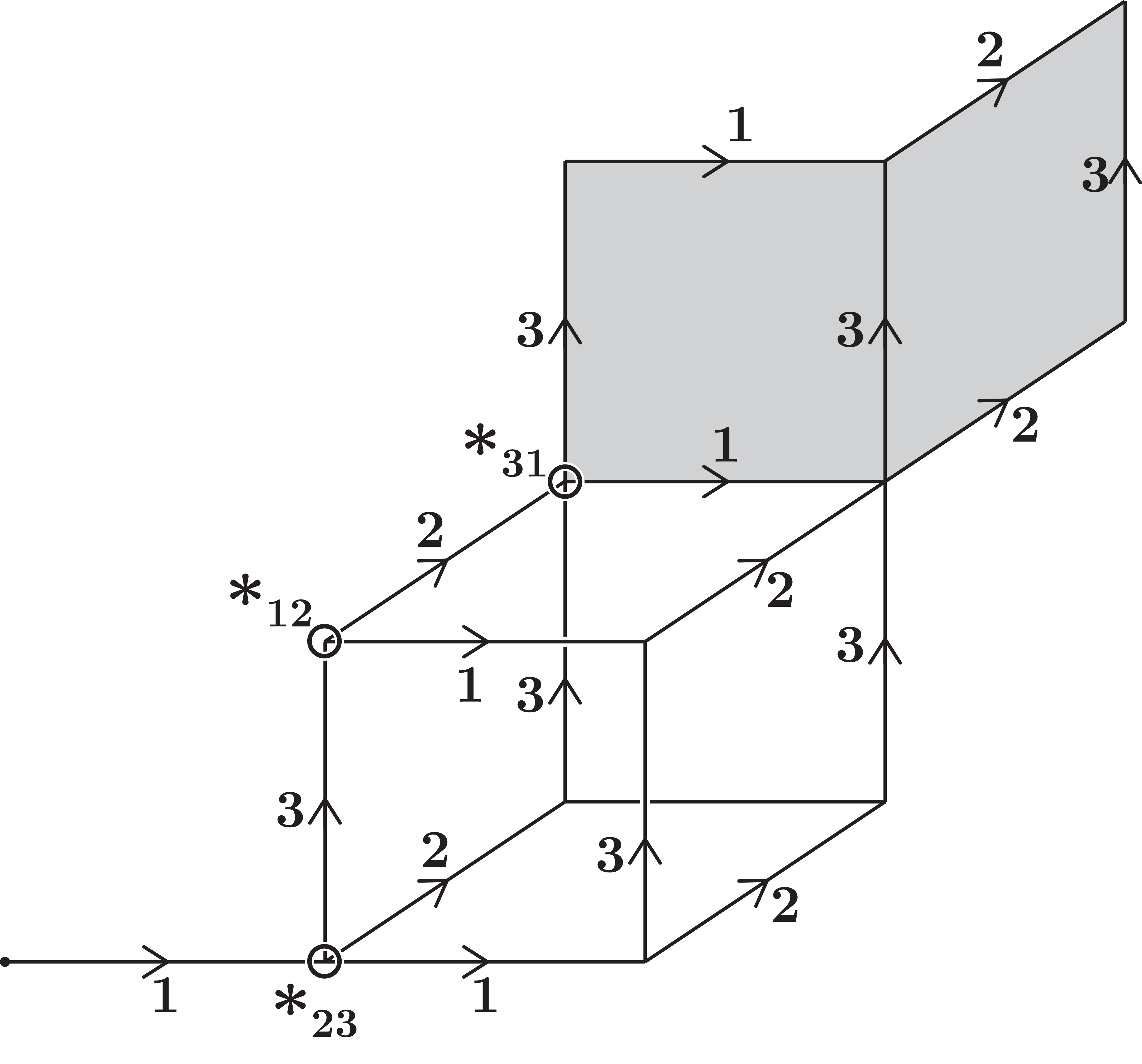}

\vspace{0.15cm}
\centerline{\quad Figure 4.10:  \ Image $\tilde I$: $\pi_2$-classes in $\tilde{X}_0$}

\bigskip

\noindent
The image $\tilde I$ consists of three parts: 

\noindent
Part 1. The  single edge $\overset{1}{\cdot\!\!-\!\!\!\longrightarrow\!\!\!-\!\!\!-\!\!\!\cdot}$ in the left side of $\tilde I$ is the image of dotted lines area inside the domain of $A$ in Figure 4.9, obtained  by retracting the area according to the dotted lines.

\smallskip
\noindent
Part 2. The cube  $\partial([0,1]^3)$ in the middle part of $\tilde I$ is the image of white area in LHS of Figure 4.9, where the additional dotted arrows give the cell decomposition of the domain corresponding to the face decomposition of the image cube.
All faces of the cube are oriented towards the inside. Note that the base points $*_{ij}$  are different in each three cases.

\smallskip
\noindent
Part 3.  2 grey square faces in upper side of $\tilde I$ are the image of grey area of Figure 4.9, where each face is covered by two oppositely oriented images.

\smallskip
We see that the parts 1. and 3. of $\tilde I$ can be retractible to the part 2.  

These properties of the lifted maps: $\mathcal{D}_{ij}\to \tilde{X}_0$ are naturally inherited by the $\pi_2$-maps: $\mathcal{D}_{ij}\to X_0$. That is, the $\pi_2$-maps are homotopic (up to the choices of base points) to a $\pi_2$-map which covers once the projection image  in $X_0$ of the cube  $\partial([0,1]^3)$, i.e.\ equal to the Hattori generator! This implies the following final and goal statement on the Example (a).

\medskip
\noindent
{\bf Conclusion.}\  {\it  Let $M(\mathcal{A})$ be the complement of three generic lines in $\mathbb{C}^2$ and consider Yoshinaga's presentation of its fundamental group  (\S2.2, 4. (a)).  Then the associated monoid  $A^+(M(\mathcal{A}),*)$ carries 3 twins of reduced non-cancellative tuples (\S4.1.2 (a)). We choose geometric fundamental relations $A$, $B$ and $C$ as in Fig.\ 4.6.  Then, the global inertia groups of the twins are trivial (Fig.\ 4.7) and all three $\Pi$-classes \eqref{eq:3pi2}  of the twins are singletons consisting of Hattori's generators (Fig.\ 4.10) of $\pi_2(M(\mathcal{A}),*)$.}\!\! 

\medskip
\noindent
{\bf Remark 4.14.}
\label{ChangeLifting} (1) Let us change the geometric fundamental relations from the one in Figure 4.6 to other $A',B'$ and $C'$ so that  the classes of $A'\!-\!A,B'\!-\!B$ and $C'\!-\!C$ in $\pi_2(M(\mathcal{A}),*)$ are non-trivial. However, the newly obtained $\pi_2$-class is  a Hattori generator, since the cell decomposition of the sphere $S^2$ given in Figure 4.9 consists of the pairs $A,A^{-1}$, $B,B^{-1}$ and $C,C^{-1}$ so that the changes cancel to each other. Is this phenomenon  particular for this case, or general? (c.f.\ Warning 2  at the end of \S4.2.4);

(2) The Zariski-van Kampen relations for the generator  $\{\gamma_i\}_{I=1}^3$ is $\{\gamma_i\gamma_j\!=\!\gamma_j\gamma_i\}_{i,j}$. It gives another positive homogeneous presentation for $M(\mathcal{A})$. However, the associated monoid is cancellative, and one cannot detect the second homotopy classes by this presentation. What is  different?\!\!

\medskip
\noindent
{\bf 4.3.2 \quad Example (b)}
 
Recall the $A_3$-type arrangement \S2.2  Example 4 (b) and its associated monoid \S4.1.2 Example (b), where we did not find a twin of reduced non-cancellative tuples. The question posed at the end of \S4.1.2  may be extended to more general line arrangements as follows.

In case of 
%
 Example (b), the space $M(\mathcal{A})$ is well-known to be a $K(\pi,1)$ space so that it does not carry non-trivial second homotopy classes. Actually, it was shown  in \S4.1.2 Example (b) that 
 for the normal crossing pairs $(H_4,H_1)$ and $(H_5,H_2)$, there are right (resp.\ left) reduced cancellative relations on the kernel $(\gamma_1\gamma_4,\gamma_4\gamma_1)$ (resp.\  $(\gamma_2\gamma_5,\gamma_5\gamma_2)$), however 
 we did not find a left (resp.\ right) reduced canellative relations on the same kernel.  So, we asked whether  $NC(\gamma_i\gamma_j,\gamma_j\gamma_i)$ for $(ij)=(1,4)$ or $(2,5)$ carries only a single reduced non-cancellative tuple which we knew already.\!\!  

More generally, it is well known that any line arrangement (except for central arrangements, i.e.\ all lines passes a common point, including infinity) has always an ordinary double point. That is, some lines $H_i$ and $H_j$ are crossing normally at a point where no other line are passing. So, in the fundamental group, we may have a commutative relation $\gamma_i\gamma_j=\gamma_j\gamma_i$. On the other hand, Yoshinaga's relations (see \S2.1 Example 4) are homogeneous of degree $n(:=\!\#$of lines) so that, if $n>2$, in the monoid $A^+(M(\mathcal{A}),*)$, $\gamma_i$ and $\gamma_j$ can not be commutative.  Therefore, it is of interest to study the behavior of the non-cancellative tuples  $NC(\gamma_i\gamma_j,\gamma_j\gamma_i)$ over the kernel $(\gamma_i\gamma_j,\gamma_j\gamma_i)$ in connection with the second homotopy group.

\smallskip
Consider the positive homogeneous presentation \eqref{eq:linesPositiveHomogeneous} for any  line arrangement defined over $\mathbb{R}$  by Yoshinaga (\S2.2 Example 4).

\smallskip
\noindent
{\bf Problem.}  Describe the relationship between 
the system of $\Pi$-classes for the system of twins of non-cancellative tuples of  the monoid $A^+(M(\mathcal{A}))$ and the second homotopy group of  $M(\mathcal{A})$.  Typically, we ask:

\noindent
Q1.  Characterize the submodule of  $\pi_2(M(\mathcal{A}),*)$ generated over $\pi_1(M(\mathcal{A}),*)$ by 
   the $\Pi$-classes. Do $\Pi$-classes generate the whole $\pi_2(M(\mathcal{A}),*)$?

\noindent
Q2. Does no twin reduced non-cancellative tuples exist, if $\pi_2(M(\mathcal{A}),*)\!=\!0$?\!\!

\smallskip
\noindent

 \bigskip
  
\noindent
{\bf Final Remarks on the paper.} 

\noindent
{\bf 1.}  The constructed $\pi_2$-classes $\Pi$ \eqref{eq:FinalPi2} and its associated inertia group $G$ \eqref{eq:FinalInertia} in the present paper are  consequences of the three concepts:

i)  {\it Semi-positive presentation} $(L,\mathcal{R})$ (Definition 2.1) of $\pi_1(W,*)$. 

ii)  {\it Geometric fundamental relations} $\mathcal{R}_g$ (Definition 3.5).

iii)  {\it Twin of non-cancellative tuples} (Definition 4.2). 

\noindent
However, these concepts seems to be still experimental. For instance, some good examples including Yoshinaga's presentation for line arrangement and discriminant complements for finite, affine or elliptic root systems (including Conjecture C, c.f.\ Footnote 18)  seem to suggest that these three concepts are not independent to each other, and there might be a unified concept for some good class of spaces and good presentation of monoids. We do not know yet how to characterize and distinguish, in general terms, these good cases from some ``absurd" examples. Does some categoriffication of present paper may suggest such new frame work?   

\medskip
\noindent
{\bf 2.}  We  have seen that the concept iii) in the above {\bf  1.}\  on twin of non-cancellative tuples  of the monoid $A^+(W)$ corresponds to degree 2 homotopy classes of the space $W$. We expect its higher degree generalization. Namely, Malcev \cite{M1, M2} and Lambek \cite{L} gave  infinite sequences of conditions such that a  monoid $A^+$ is embeddable in its localization $A$. Any finite subset of the conditions is not sufficient to be injective. The cancellativity is the first condition among all conditions.  Such structure of their conditions resemble the hierarchic structure of vanishing of higher homotopy groups $\pi_2$, $\pi_3$, $\pi_4$, $\cdots$.  It is quite interesting to find homotopical meaning of Malcev's and/or  Lambek's conditions. We ask whether they (partially) correspond to the vanishing of higher homotopy groups, or, oppositely, whether we can construct some higher homotopy classes by denying a finite subset of the conditions ($+\alpha$?)?

 \section{ Appendix:  Classifying spaces of monoids}
\vspace{-0.2cm}
\centerline{   ($\pi_2$-classes in classifying space $BA^+$)}

\medskip
 
 We construct $\pi_2$-classes in  the classifying space $BA^+$ for a monoid $A^+$ having a twin of non-cancellative tuples. The construction does not refer to the system of  generators or fundamental relations of the monoid.  
 We try to compare the classes in the classifying space $BA^+(W,L,\mathcal{R})$ with the $\Pi$-class constructed  in  $(W,L,\mathcal{R})$ for  a semi-positively presented space. 

\medskip
Let $A^+$ be any discrete monoid. Let us recall briefly a description of the classifying space $BA^+$ as a simplicial set (see, for instance, \cite{Len, Mc, Se}). 

\begin{itemize}
\item $BA^+$ has a single 0-simplex, called $*$. 
\item For each element $m\in A^+$, there exists an oriented 1-simplex in $BA^+$ called $(m)$ which starts and ends at $*$.
\item For any oriented tuple of elements $m_1,\cdots,m_n \in A^+$ ($n\in\mathbb{Z}_{>1}$), there exists an oriented n-simplex called $(m_1,\cdots,m_n)$ in $BA^+$  such that its 
boundary faces patch with 
$(n\!-\!1)$-simplices $(m_2,\cdots,m_n)$, $(m_1,\cdots,m_km_{k\!+\!1},\cdots,m_n)$ for $k\!=\!1,\cdots,n\!-\!1$, and $(m_1,\cdots,m_{n\!-\!1})$. 
\item If the oriented tuple contains units 1, then it is identified with a shorter tuple obtained by removing the units.
\end{itemize}

We note that even if an element $m\in A^+$ may not be invertible in the monoid $A^+$, the associated loop $(m)$ has the inverse loop $(m)^{-1}$  in the space $BA^+$ which we shall use systematically below. 
It is well known that the fundamental group of $BA^+$ is naturally isomorphic to the localization $A$ of the monoid $A^+$.  The natural localization morphism $A^+\to A$  induces the fibration $BA^+ \to BA$, where $BA$ is well-known to be an Eilenberg-MacLane space. 

\medskip
We turn our attention to monoids $A^+$ which are not cancellative. 

\smallskip
\noindent
{\bf I.   \normalsize  Relative $\pi_2$-classes of $BA^+$ associated with  NC-tuples.}

\noindent
Recall  that a pair $(a,b,c,d) \in (A^+)^4$  is called a non-cancellative tuple over the kernel $(b,c)\in (A^+)^2$ if it satisfies a non-cancellative equivalence relation \eqref{eq:NCRelation}:
$
abd \ \sim \ acd \ 
$
but $b\not{\!\!\sim}\ c$. 
%
Associated with LHS of the relation  \eqref{eq:NCRelation}, let us consider the 3-simplex $(a, b,d)$ in $BA^+$  (see Figure 5.1). The existence of the simplex implies that the loop $(b)$ is homotopic to the concatenation of three loops  $(a)^{-1}$, $(abd)$ and $(d)^{-1}$. Let us denote  the homotopy equivalence from $(b)$ to $(a)^{-1}\cdot (abd)\cdot (d)^{-1}$ by 
$$
(b) \quad \underset{abd}{\sim} \quad (a)^{-1}\cdot (abd)\cdot (d)^{-1} 
$$
Note that, not only the existence of the homotopy equivalence between the loops $(b)$ and $ (a)^{-1}\cdot (abd)\cdot (d)^{-1}$, but we have chosen a particular relative 2-homotopy class given by the  3-simplex $(a,b,d)$ in $BA^+$.

\vspace{0.2cm}
\hspace{3.2cm} 
\includegraphics[width=0.27\textwidth]{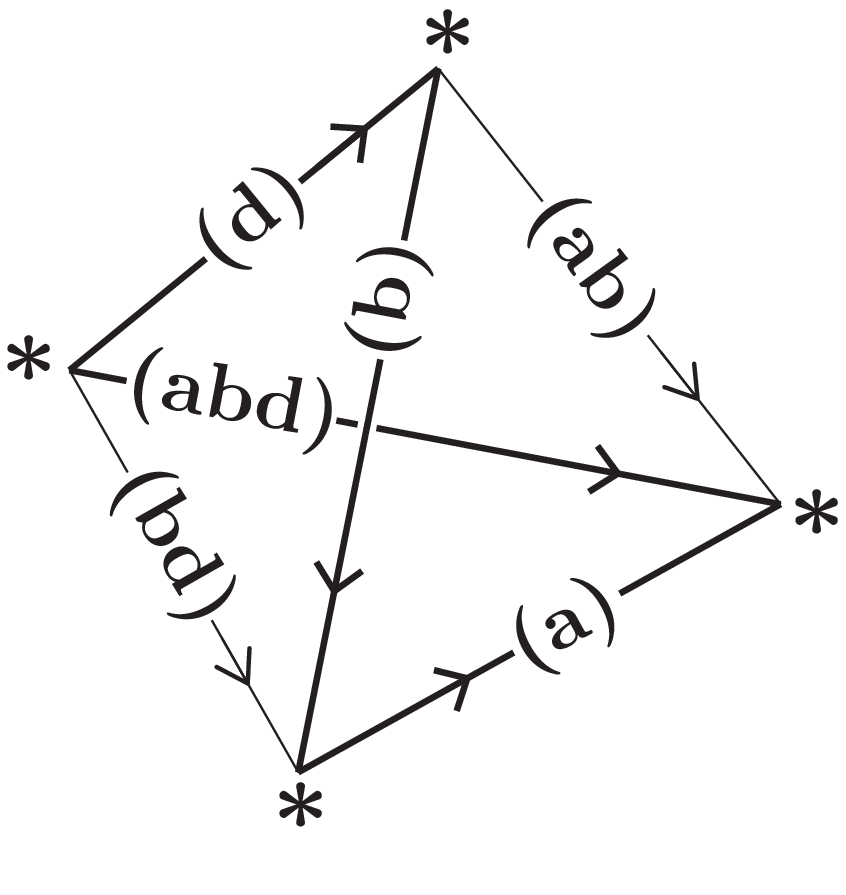}

\vspace{-0.1cm}
\centerline{ Figure 5.1: \   The 3-simplex $(a,b,d)$ and its edges in $BA^+$}

\bigskip
Parallel to the above construction, let us consided the 3-simplex $(a,c,d)$ associated to RHS of the relation  \eqref{eq:NCRelation}.  So, we obtain the other homotopy equivalence from $(c)$ to $(a)^{-1}\cdot (acd)\cdot (d)^{-1}$:
$$
(c) \quad \underset{acd}{\sim}  \quad (a)^{-1}\cdot (acd)\cdot (d)^{-1} .
$$
We note that $abd\sim acd$ in $A^+$ implies that the loops $(abd)$ and $(acd)$ are the same loop in $BA^+$. That is, $(a)^{-1}\cdot (abd)\cdot (d)^{-1} = (a)^{-1}\cdot (acd)\cdot (d)^{-1}$. Concatenating the equivalence $\underset{abd}{\sim}$ with the equivalence $(\underset{\ \ acd\!}{\!\sim\ })^{-1}$, we obtain a homotopy equivalence
\begin{equation}
\label{eq:classify}
\qquad\quad  (b) \ \underset{abd(acd)^{-1}}{\sim\ \ \ }  (c)  \quad \in \pi_2(BA^+,\vee_{m\in L} (m),*) ,\!\!\! 
\end{equation}
where $L$ is any subset of $A^+$ which generates $a,b,c,d$.
That is, associated with the non-cancellative relation \eqref{eq:NCRelation}, 
a particular homotopy equivalence  $(\underset{abd(acd)^{-1}}{\sim})$ between $(b)$ and $(c)$ is chosen.\!\!\!

\bigskip
\noindent
{\it Note.} If either $d$ (resp.~$a$) is equal to 1 in \eqref{eq:NCRelation}, the homotopy equivalence \eqref{eq:classify} is given much simpler by a use of the relation obtained by concatenation two 2-simplices: $(a,b)$ and $(a,c)^{-1}$ along loops $(a)$ and $(ab)=(ac)$  (resp.~two 2-simplices: $(b,d)$ and $(c,d)^{-1}$ along loops $(bd)=(cd)$ and $(d)$).  

\medskip
\noindent
{\bf II.   \normalsize  $\pi_2$-classes of $BA^+$ associated with twin of NC-tuples.}   

\smallskip
\noindent
Finally, let a twin of non-cancellative tuples $(a,b,c,d), (a',b,c,d') \in NC(b,c)$ over  the same kernel $(b,c)\in (A^+)^2\setminus \Delta(A^+(W))$ be given. Then, the difference of 2 classes \eqref{eq:classify}. 
\begin{equation}
\label{eq:pi22}
\begin{array}{rcl}
\vspace{0.1cm}
&&\Pi(BA^+\!:\!(a,b,c,d),(a',b,c,d')\!) \\
&:=&(\underset{abd(acd)^{-1}}{\sim}) \cdot_2 (\underset{abd(acd)^{-1}}{\sim})^{-1} \quad \in \ \ \pi_2(BA^+,*)
\end{array}
\end{equation}
 where, in the notation, we added $BA^+$ to distinguish it from \eqref{eq:FinalPi2}.

\medskip
We come back to geometry: recall the setting in \S2.1, where $(W,L,\mathcal{R})$ is a semi-positively presented space and $A(W)$ and $A^+(W)$ are  its associated fundamental group and monoid presented by the relation $\mathcal{R}$, respectively. Note that there are natural fibrations:
\begin{equation}
\label{eq:ClassifyingFibration}
\begin{array}{cccccc}
&W \!& &
& & \!  B(A^+(W)) \\
\vspace{-0.3cm}
\\
&&\!\!\! \searrow \!\! & &\!\! \swarrow \!\!\! & \\
\vspace{-0.3cm}
\\
&&&\! B(A(W))\! &&
\end{array}
\end{equation}
where the searrow $\searrow$ is the first step for constructing the Postnikov tower for $W$ and the  swarrow $\swarrow$ is functorialy induced fibration from the localization morphism $A^+(W)\to A(W)$. We ask, in three different levels, questions whether and when there exists a  map $W\to B(A^+(W))$ which makes the diagram commutative up to homotopy.

\medskip
\noindent 
{\bf Question 5.1.}  When does there exists a map 
\begin{equation}
\label{eq:Question1}
(W,W^{(1)},*) \ \  \longrightarrow  \ \  (B(A^+(W)),\vee_{m\in L} (m),*)
\end{equation}
which makes  the diagram \eqref{eq:ClassifyingFibration}, up to homotopy, commutative?  Then, classify such maps up to homotopy.

\medskip
\noindent 
{\bf Question 5.2.}
Suppose that the monoid $A^+(W)$ carries non-canncelative tuple, say $(a,b,c,d)\in NC(b,c)$. Choose any liftings $\tilde{b},\tilde{c} \in L^*$ of $b,c\in A^+(W)$. 
When dose a map given in {\bf Question 5.1.}  makes  the following diagram commutative?
\begin{equation}
\label{eq:question2}
\begin{array}{ccccl}
\vspace{0.1cm}
 \pi_2(W,W^{(1)},*)  &  \rightarrow & \pi_2( B(A^+(W),\vee_{m\in L} (m)
 ,*) 
\\
\vspace{0.1cm}
\uparrow && \uparrow \\
\Pi((a,b,c,d),\tilde{b},\tilde{c}) 
&\rightarrow & \{(\underset{abd(acd)^{-1}}{\sim}) \}\\
\end{array}
\end{equation}
We note that the set $\{(\underset{abd(acd)^{-1}}{\sim}) \}$ in the RHS consists of a single element, whereas $\Pi((a,b,c,d),\tilde{b},\tilde{c})$ form an orbit of the division inertia group. It implies that the map \eqref{eq:Question1} induces a homomorphism which bring the inertia subgroup of $\pi_2(W,*)$  to the trivial subgroup?

 \medskip
\noindent 
{\bf Question 5.3.}
Suppose that the monoid $A^+(W)$ carries a twin of (reduced?) non-cancellative tuples. 
When dose the maps given in {\bf Question 5.2.}  makes  the following diagram commutative?
\begin{equation}
\label{eq:question3}
\!\!\!\begin{array}{ccccl}
\vspace{0.1cm}
 \pi_2(W,*)  &  \rightarrow & \pi_2( B(A^+(W),*) 
\\
\vspace{0.1cm}
  \uparrow && \uparrow \\
\Pi((a,b,c,d),(a',b,c,d')\!) 
\!\!&\!\rightarrow\! &\!\! \Pi(BA^+\!:\!(a,b,c,d),(a',b,c,d')\!) 
\end{array}
\end{equation}
We note that the set $\Pi(BA^+\!:\!(a,b,c,d),(a',b,c,d')\!) $ in the RHS consists of a single element, implying the same consequence as in Question 5.2. 

 \bigskip
  
\noindent
{\bf Acknowledgements:} 

The author expresses his gratitudes to Kenji Iohara, Yoshihisa Saito, Masahiko Yoshinaga, Tadashi Ishibe and Daisuke Kishimoto for their constant interests and helps during the preparation of present work. 
In particular, the author is indebted to Daisuke Kishimoto to clarify homotopical structures studied in the present paper, and  to Masahiko Yoshinaga for the introduction to line arrangements.   
 He is also grateful to  
 Yoshihisa Obayashi for drawing all figures in the present paper.

%
%
%

%


The present work was partially supported by JSPS KAKENHI Grant Number 18H01116 and 23H01068.  The author express his thank to the institutes RIMS, IPMU, OIST, MPIM, and MFO where the present work was prepared.

\end{document}